\gdef\n@te#1#2{\leavevmode\vadjust{%
 {\setbox\z@\hbox to\z@{\strut#1}%
  \setbox\z@\hbox{\raise\dp\strutbox\box\z@}\ht\z@=\z@\dp\z@=\z@%
  #2\box\z@}}}
\gdef\leftnote#1{\n@te{\hss#1\quad}{}}
\gdef\rightnote#1{\n@te{\quad\kern-\leftskip#1\hss}{\moveright\hsize}}
\gdef\?{\FN@\qumark}
\gdef\qumark{\ifx\next"\DN@"##1"{\leftnote{\rm##1}}\else
 \DN@{\leftnote{\rm??}}\fi{\rm??}\next@}}
\DeclareFontFamily{OT1}{wncyr}{\hyphenchar\font45 }
\DeclareFontShape{OT1}{wncyr}{m}{n}{%
   <5> <6> <7> <8> <9> gen * wncyr
   <10> <10.95> <12> <14.4> <17.28> <20.74>  <24.88>wncyr10}{}
\DeclareFontShape{OT1}{wncyr}{m}{it}{%
   <5> <6> <7> <8> <9> gen * wncyi
   <10> <10.95> <12> <14.4> <17.28> <20.74> <24.88> wncyi10}{}
\DeclareFontShape{OT1}{wncyr}{m}{sc}{%
   <5> <6> <7> <8> <9> <10> <10.95> <12> <14.4>
   <17.28> <20.74> <24.88>wncysc10}{}
\DeclareFontShape{OT1}{wncyr}{b}{n}{%
   <5> <6> <7> <8> <9> gen * wncyb
   <10> <10.95> <12> <14.4> <17.28> <20.74> <24.88>wncyb10}{}
\def\rus{\usefont{OT1}{wncyr}{m}{n}\cyracc\fontsize{8}{10pt}\selectfont}
\def\rusit{\usefont{OT1}{wncyr}{m}{it}\cyracc\fontsize{8}{10pt}\selectfont}
\theoremstyle{plain}
\newtheorem{theorem}{Theorem}[section]
\newtheorem{lemma}[theorem]{Lemma}
\newtheorem{remark}[theorem]
{Remark}
\newtheorem{corollary}[theorem]{Corollary}
\theoremstyle{definition}
\newtheorem{definition}[theorem]{Definition}
\newtheorem{nothing*}[theorem]{}
\newtheorem{subnothing*}[sub]{}
\newtheorem{example}[theorem]{Example}
\newtheorem{examples}[theorem]{Examples}
\newtheorem{claim}{Claim}
\theoremstyle{remark}
\def\cod{{\rm codim}}
\def\GG{{G/\!\!/G}}
\def\pG{{\pi^{}_{G}}}
\def\tG{\widehat{G}}
\def\tGG{{\widehat{G}/\!\!/\widehat{G}}}
\def\ptG{{\pi_{\widehat G}}}
\def\tS{\widehat{S}}
\def\tC{\widehat{C}}
\def\tT{\widehat{T}}
\def\mmu{{\mbox{\boldmath$\mu$}}}
\begin{document}




\title[Cross-sections, quotients, and representation rings]{Cross-sections,
quotients, and \\
representation rings of\\ semisimple algebraic groups}

\author[Vladimir  L. Popov]{Vladimir  L. Popov${}^*$}
\address{Steklov Mathematical Institute,
Russian Academy of Sciences, Gubkina 8, Moscow\\
119991, Russia} \email{popovvl@orc.ru}

\thanks{
 ${}^*$\,Supported by
 grants {\rus RFFI
08--01--00095}, {\rus N{SH}--1987.2008.1}, and the
program {\it Contemporary Problems of Theoretical
Mathematics} of the Russian Academy of Sciences, Branch
of Mathematics. }


\subjclass[2000]{14M99, 14L30, 14R20, 14L24, 17B45}

\keywords{Semisimple algebraic group, conjugating
action, cross-section, quotient, representation ring}

\dedicatory{ \hfill{
\begin{minipage}[t]{2.4in}
{\raggedright {\rm``}Is Steinberg's theorem $[\ldots]$
only true for simply connected groups $[\ldots]$\hskip
.1mm{\rm ?}
What hap\-pens for ${\rm GP}(1)$, for instance\,{\rm?}
Is there a rational section of $\;G$ over $I(G)$
{\rm(\!``}inva\-ri\-ants\,{\rm'')} in this case\,{\rm?}
$[\ldots]$ Is it true
that $I(G)$ is a rational variety $[\ldots]$\,{\rm ?''}\\
\hskip 10mm {\rm A. Grothendieck}, {\it Letter to J.-P.
Serre},\\ \hskip 10mm {\rm January} {\rm 15, 1969}, {\rm
\cite[pp.\,240--241]{GS}.}}
\end{minipage}
} }

\maketitle


\begin{abstract}
Let $G$ be a connected semisimple algebraic group over
an algebraically closed field $k$. In 1965 {\sc
Steinberg} proved that if $G$ is simply connected, then
in $G$ there exists a closed irreducible cross-section
of the set of closures of regular conjugacy classes. We
prove that in arbitrary $G$ such a cross-section  exists
if and only if the universal covering isogeny
$\tau\colon \tG\to G$ is bijective; this answers {\sc Grothendieck}'s question
cited in the epigraph.\;In particular, for
${\rm char}\,k=0$, the converse to {\sc Steinberg}'s
theorem holds. The existence of a cross-section in $G$
implies, at least for ${\rm char}\,k=0$, that the
algebra $k[G]^G$ of class functions on $G$ is generated
by ${\rm rk}\,G$ elements.  We describe, for arbitrary
$G$, a minimal ge\-ne\-rating set of
$k[G]^G$ and that of the representation ring of $G$
and answer two {\sc Grothendieck}'s questions on
constructing generating sets of $k[G]^G$.
We prove the existence of a
rational (i.e., local) section of the quotient morphism for arbitrary $G$
and the
existence of a rational cross-section in $G$ (for
${\rm char}\,k=0$, this has been proved earlier);
this answers the other {\sc Grothendieck}'s question
cited in the epigraph.
We also prove that the existence of a rational section is equi\-va\-lent to the existence of a rational
$W$-equi\-va\-riant map $T\dashrightarrow G/T$ where $T$ is
a maximal torus of $G$ and $W$ the Weyl group.
\end{abstract}

\section{Introduction}

Below all algebraic varieties are taken over an
algebraically closed field $k$. We use the standard
notation and conventions of \cite{Bor} and \cite{Sp}.

Let $G$ be a connected semisimple algebraic group,
$G\neq \{e\}$. Let $(\GG, \pG)$ be a categorical
quotient for the conjugating action of $G$ on itself,
i.e., $\GG$  is an affine variety (quotient variety) and
\vskip 1mm
\begin{equation}\label{piG}
\pG\colon G\longrightarrow \GG
\end{equation}
\vskip 2mm
\noindent
a surjective  morphism (quotient morphism) such that
$\pi^*_G\big(k[\GG]\big)$ is the algebra $k[G]^G$ of
class functions on $G$. Every fiber of $\pG$ is then the
closure of a regular conjugacy class  (i.e., that of the
maximal dimension) and such classes in general position
are closed  \cite[Theorem 6.11, Cor. 6.13, and Sect.
2.14]{St1}.

\begin{definition}
A closed irreducible subvariety $S$ of $G$ is called a
{\it cross-section} (of the collection of fibers of
$\pG$) in $G$ if $S$ intersects
every fiber of $\pG$  at a single point.
\end{definition}

The elements of $S$
are the ``canonical forms'' of the elements of a dense
constructible subset of $G$ with respect to conjugation.\;The image of any {\it section}
 of $\pG$ (i.e., a morphism $\sigma\colon \GG\to G$ such that $\pG\circ\sigma={\rm id}^{}_{\GG}$)
is an example of such $S$; moreover, this $S$ has the property that $\pG|_S\colon S\to \GG$ is an isomorphism. For ${\rm char}\,k=0$,
every cross-section in $G$ is obtained in this manner
(see Subsection \ref{re}.A).

In 1965 {\sc Steinberg} gave an explicit construction of
a section of $\pG$ for every simply connected semisimple
group $G$ (see his celebrated paper \cite{St1}).
Its image is a cross-section that intersects every
regular conjugacy class
 and does not intersect other conjugacy classes.

In this paper we explore what happens in the general
case, i.e., when  $G$ is not necessarily simply
connected. In this case   the following two facts about
cross-sections in $G$ for ${\rm char}\,k=0$ are known.

First, by \cite[Theorem 0.3]{CTKPR} in every connected
semisimple algebraic group $G$ there is a {\it rational
section} of $\pG$, i.e., a section over
a dense open subset of $\GG$ (local section).

Second, by {\sc Kostant}'s theorem \cite[Theorem
0.10]{K} there is an infinitesimal counterpart of {\sc
Steinberg}'s cross-section:
 for
 the adjoint action of $G$ on its Lie algebra
${\rm Lie}\,G$, there is a closed irreducible subvariety
in ${\rm Lie}\,G$ that intersects   every regular
$G$-orbit at a single point.

In order to formulate our result consider the universal
covering of $G$, i.e.,
a central
isogeny
\vskip -1mm
\begin{equation*}
\tau\colon \tG\longrightarrow G
\end{equation*}
\vskip 2mm
\noindent
such that $\tG$ is a simply connected semisimple
algebraic group (by \cite[Prop. (2.24)(ii)]{BT}
it exists and unique up to
$G$-isomorphism).

We prove
the following

\begin{theorem}\label{main} Let $G$ be a connected semisimple algebraic group.
\begin{enumerate}
\item[\rm(i)] The following properties are equivalent:
\begin{enumerate}
\item[\rm(a)] there is a cross-section in $G$;
\item[\rm(b)] the isogeny $\tau$ is bijective.
\end{enumerate}
\item[\rm(ii)] If  $\sigma\colon \GG\to G$ is a section
of $\pG$, then the cross-section $\sigma(\GG)$ in $G$
in\-ter\-sects every regular conjugacy class and does
not intersect other conjugacy classes.
\end{enumerate}
\end{theorem}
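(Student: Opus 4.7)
The plan for (ii) is a differential argument. Since $\pG\circ\sigma=\id^{}_{\GG}$, the chain rule gives $d\pG^{}_{\sigma(y)}\circ d\sigma^{}_{y}=\id$ on $T^{}_{y}\GG$, so $d\pG$ is surjective at every point of $\sigma(\GG)$. Combined with the standard fact that $d\pG^{}_{g}$ is surjective precisely when $g$ is regular (deduced from $T^{}_{g}(G\!\cdot\!g)={\rm Lie}\,G/{\rm Lie}\,Z^{}_{G}(g)$ together with $\dim Z^{}_{G}(g)\geq\rk\,G$, with equality iff $g$ is regular), this shows $\sigma(\GG)\subseteq G^{\rm reg}$, so $\sigma(\GG)$ meets no non-regular conjugacy class. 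On the other hand $\sigma(\GG)$ meets every fiber of $\pG$, and each fiber is the closure of a unique regular conjugacy class, so every regular class is hit.

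For (i), the implication (b)$\Rightarrow$(a) is obtained by pushing Steinberg's section forward. Let $\hat\sigma\colon\tGG\to\tG$ be Steinberg's section (which exists since $\tG$ is simply connected) and set $S:=\tau(\hat\sigma(\tGG))$. The isogeny $\tau$ is finite, hence closed, and by assumption bijective; so $S$ is closed and irreducible. The induced morphism $\bar\tau\colon\tGG\to\GG$ of categorical quotients is then also bijective, and the identity $\pG\circ\tau=\bar\tau\circ\ptG$ sets up a bijective correspondence between fibers of $\ptG$ and of $\pG$. Since $\hat\sigma(\tGG)$ meets each fiber of $\ptG$ in exactly one point, $S$ meets each fiber of $\pG$ in exactly one point.

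The implication (a)$\Rightarrow$(b) is the crux, and my plan has two steps. First, reduce to a lifting problem. Given a cross-section $S\subseteq G$, the map $\pG|^{}_{S}\colon S\to\GG$ is a bijective morphism onto the normal variety $\GG$, so in characteristic zero it is an isomorphism and $S=\sigma(\GG)$ for a section $\sigma$ of $\pG$. Since $\tGG$ is an affine space (Chevalley--Steinberg) and $\tau$ is finite \'etale in characteristic zero, the morphism $\sigma\circ\bar\tau\colon\tGG\to G$ lifts through $\tau$ to $\hat\sigma\colon\tGG\to\tG$ with $\tau\circ\hat\sigma=\sigma\circ\bar\tau$; adjusting by a fixed $k^{}_{0}\in K:=\ker\tau$ arranges $\ptG\circ\hat\sigma=\id^{}_{\tGG}$. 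A short computation using connectedness of $\tGG$ and centrality of $K$ (starting from $\tau(\hat\sigma(k\cdot\tilde y))=\sigma(\bar\tau(\tilde y))=\tau(\hat\sigma(\tilde y))$) then yields $\hat\sigma(k\cdot\tilde y)=k\cdot\hat\sigma(\tilde y)$ for all $k\in K$, so $\hat\sigma$ is a $K$-equivariant section of $\ptG$ for the left-translation action of $K$.

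Second, rule out such a $K$-equivariant section when $K\neq\{e\}$. Since the restrictions of the fundamental characters of $\tG$ to its center are linearly independent, the $K$-action on $\tGG$ is faithful, so trivial action of $K$ forces $K=\{e\}$ and hence bijectivity of $\tau$. The obstruction to $K$-equivariance I expect to exhibit directly: writing $\hat\sigma(\tilde y)$ in matrix coefficients of the fundamental representations of $\tG$, equivariance forces each coefficient to lie in a specific character-eigenspace for the $K$-action on $k[\tGG]$, while the polynomial identities among these coefficients (group-scheme relations of the form ``$\det=1$'') then become inconsistent. This is transparent in the toy case $\tG=\mathrm{SL}^{}_{2}$, $G=\mathrm{PGL}^{}_{2}$: equivariance forces all four matrix entries of $\hat\sigma(t)$ to be odd polynomials in $t$, so $\det\hat\sigma(t)$ becomes an even polynomial of strictly positive degree and cannot equal~$1$. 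In positive characteristic the same reasoning applies to the maximal \'etale quotient $K^{}_{\rm et}$ of $K$, whose triviality is equivalent to $\tau$ being bijective on $k$-points, which is the content of~(b).
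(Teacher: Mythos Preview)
Your arguments for (ii) and for (b)$\Rightarrow$(a) match the paper's. For (ii) the paper invokes Steinberg's criterion (regular $\Leftrightarrow$ there exist $r$ class functions with linearly independent differentials at the point) rather than your orbit-dimension count; the latter needs some care in positive characteristic, but the idea is the same.

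For (a)$\Rightarrow$(b) your route is different from the paper's and has two issues.

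\textbf{The obstruction step is only a sketch.} Your $\mathrm{SL}_2$ computation is correct, and in fact it is the coordinate version of a one-line conceptual argument: the $K$-action on $\tGG\cong\mathbf A^r$ is linear and hence fixes the origin, so a $K$-equivariant section $\hat\sigma$ would send the origin to a $K$-fixed point of $\tG$; since $K$ acts on $\tG$ by left translation this forces $K=\{e\}$. That is exactly the paper's Corollary~2.2. Your ``matrix coefficients lie in eigenspaces, then $\det=1$ fails'' is this fixed-point argument written out in coordinates, but as stated for general $\tG$ it remains an expectation rather than a proof.

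\textbf{Positive characteristic is a genuine gap.} Your whole lifting strategy begins by promoting the cross-section $S$ to a section $\sigma$ via Zariski's Main Theorem. In characteristic $p$ the bijection $\pG|_S\colon S\to\GG$ need not be separable (the paper exhibits such an $S$ in $\mathbf{SL}_3$, Example~6.1), so you cannot get $\sigma$, and hence cannot even formulate the map you want to lift. Saying ``the same reasoning applies to $K_{\rm et}$'' does not address this; the problem is upstream of the covering argument. The paper sidesteps this entirely: after a separable/purely-inseparable factorisation of $\tau$ it works with the closed set $\tau^{-1}(S)\subset\tG$ directly, shows it has a unique $r$-dimensional irreducible component $\hat S$, and proves by a topological argument (finiteness of fibres, normality of $\tGG$) that $\hat S$ is a $Z$-stable cross-section in $\tG$. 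No section of $\pG$ is ever needed, and the fixed-point obstruction then applies. Your approach buys a cleaner story in characteristic~$0$ (lift through a simply connected base), but the paper's component-of-the-preimage argument is what makes the result uniform in all characteristics.
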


\begin{remark} {\rm
The isogeny $\tau$ is bijective if and only if it is
either an isomorphism or purely inseparable (radical).
The latter holds if and only if ${\rm char}\,k=p>0$ and
$p$ divides the order of the fundamental group of $G$.}
\end{remark}

The next corollary answers the first
{\sc Grothendieck}'s question
cited in the epigraph
and the question
posed in \cite[p.\,4]{CTKPR}.

\begin{corollary} Let $G$ be a connected semisimple algebraic group.

\begin{enumerate}
\item[\rm(i)] If a section of $\pG$ exists, then $\tau$
is bijective. \item[\rm(ii)] For ${\rm char}\,k=0$, the
following properties are equivalent:
\begin{enumerate}
\item[\rm(a)] there is a section of $\pG$; \item[\rm
(b)] there is a cross-section in $G$; \item[\rm (c)] $G$
is simply connected.
\end{enumerate}
\end{enumerate}
\end{corollary}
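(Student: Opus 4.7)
\smallskip

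\noindent\textbf{Proof plan for the Corollary.}

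The plan is to deduce both parts from Theorem \ref{main} together with the characterization of bijectivity of $\tau$ given in the Remark. The two auxiliary facts I will rely on are (1) Steinberg's theorem from \cite{St1}, asserting that a section of $\pG$ exists whenever $G$ is simply connected, and (2) the standard fact that in characteristic zero every isogeny of algebraic groups is separable, so a bijective isogeny in characteristic zero is automatically an isomorphism.

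For part (i), I would argue as follows. Suppose a section $\sigma\colon\GG\to G$ of $\pG$ exists. By part (ii) of Theorem \ref{main}, the image $\sigma(\GG)$ is a closed irreducible subvariety of $G$ which intersects every regular conjugacy class of $G$ and no others; in particular it is a cross-section in the sense of the definition given above. Hence property (a) of Theorem \ref{main}(i) holds, so by the equivalence (a)$\Leftrightarrow$(b) of that theorem the isogeny $\tau$ is bijective.

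For part (ii), I would prove the cycle (c)$\Rightarrow$(a)$\Rightarrow$(b)$\Rightarrow$(c). The implication (c)$\Rightarrow$(a) is Steinberg's theorem. The implication (a)$\Rightarrow$(b) is immediate from Theorem \ref{main}(ii), as in part (i) above: the image of a section is a cross-section. For (b)$\Rightarrow$(c), suppose a cross-section in $G$ exists. Theorem \ref{main}(i) gives that $\tau\colon\tG\to G$ is bijective. Since $\mathrm{char}\,k=0$, the isogeny $\tau$ is necessarily separable, and a bijective separable morphism of irreducible varieties is an isomorphism; hence $\tau$ is an isomorphism of algebraic groups, and $G$, being isomorphic to the simply connected group $\tG$, is itself simply connected.

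There is essentially no obstacle to overcome beyond citing Theorem \ref{main} and invoking the characteristic-zero dichotomy that bijective isogenies must be isomorphisms (since purely inseparable isogenies are impossible). Everything else is bookkeeping among the three equivalent conditions.
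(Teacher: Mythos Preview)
Your proof is correct and follows the intended route: the paper states this Corollary without explicit proof, precisely because it is immediate from Theorem~\ref{main} together with the Remark on bijectivity of $\tau$. One minor simplification: you do not need Theorem~\ref{main}(ii) to conclude that the image of a section is a cross-section---this is already observed directly after the definition in the introduction (and proved in Subsection~6.A), so the implication (a)$\Rightarrow$(b) is elementary and Theorem~\ref{main}(ii) plays no role in the Corollary.
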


Theorem \ref{main} is proved in Section \ref{cs}.

One can show (see below Lemma \ref{smooth}) that if a
cross-section in $G$ exists, then, at least for ${\rm
char}\,k=0$, the variety $G/\!\!/G$ is smooth (the
converse is not true). The known criterion of smoothness
of $\GG$ (Theorem \ref{sing}) may be
interpreted as that of the existence of ${\rm rk}\,G$
generators of $k[G]^G$.
 In Section~\ref{singugen}  we consider the general case and describe
a minimal generating set of $k[G]^G$ and singularities
of $\GG$ for any $G$. This is based on the property that
actually $\GG$ is a toric variety of a maximal torus $T$
of $G$. In particular, it also implies
the affirmative answer to the last {\sc Grothendieck}'s question
cited in the epigraph:

\vskip 2mm

\noindent{\bf Corollary \ref{ratio}.} {\it $\GG$ and
$T/W$ are rational varieties.}

\vskip 2mm

\noindent Here $W=N_G(T)/T$, where
$N_G(T)$ is the normilizer of $T$ in $G$,
 is the Weyl group of $G$.
It acts on
$T$ via conjugation.

Parallel to this we describe a minimal generating set of
the representation ring $R(G)$ of $G$. Note that finding
generators of $R(G)$ attracted people's attention during
long time, in particular, because of the bearing on the
$K$-theory (cf., e.g., \cite[Chap.\;13]{Hus}  where the
generators of $R(G)$ are found for some classical $G$'s
utilizing the ad hoc bulky arguments; see
also~\cite{A}). Singularities of $\GG$ attracted people's
attention as well
(see \cite[Sects.\;3.15, 4.5]{Sl}).

 The precise
formulations of these results
are given below in Theorems \ref{toric} and
\ref{generators} and Lemma \ref{explcenter}.

Constructing generating sets of $k[G]^G$ is the topic
of two further questions of {\sc Gro\-thendieck} asked
in \cite[p.\;241]{GS}. In Section \ref{Yettwo} we
answer the first question
in the ne\-ga\-tive and the second
in the positive.

In Section \ref{rcs} we consider rational (i.e.,\;local) sections of
$\pG$ and {\it rational cross-sections} in $G$, i.e.,
irreducible closed subsets $S$ of $G$ that intersect at
a single point every fiber of $\pG$ over a point of a
dense open subset of $\GG$ (depending on $S$). The closure of the image of
a rational section of $\pG$ is an example of such $S$; moreover, this $S$ has the property that $\pG|_S\colon S\to \GG$ is a birational isomorphism.
For ${\rm char}\,k=0$, every rational
cross-section in $G$ is obtained in this way.

First, we show that
the existence of a rational section of\,$\pG$ is
equivalent to
another property. Namely, note that
$W$ also acts on $G/T$ as follows:\vskip -1mm
\begin{equation}\label{G/T}
w\cdot gT:=g\overset{.}{w}^{-1}T,
\end{equation}\vskip 2mm\noindent where $\overset{.}{w}\in N_G(T)$ is a representative of $w$.

We prove

\begin{theorem}\label{rsrcs} Let $G$ be a connected semisimple algebraic group. The following pro\-perties are equivalent:
\begin{enumerate}
\item[\rm(i)] there is a rational section of $\;\pG$;
\item[\rm(ii)] there is a $W$-equivariant rational map
$T\dashrightarrow G/T$.
\end{enumerate}
\end{theorem}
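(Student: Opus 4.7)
My plan is to set up a direct bijection between rational sections of $\pG$ and $W$-equi\-variant rational maps $T\dashrightarrow G/T$, using the birational identification $\GG\cong T/W$ (Chevalley's restriction theorem on class functions of $G$) together with the basic fact that the centralizer in $G$ of a regular semisimple element $t\in T$ is exactly $T$.

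For (i)$\Rightarrow$(ii), I would start with a rational section $\sigma\colon\GG\dashrightarrow G$ of $\pG$ and pull it back along the quotient $q\colon T\to T/W=\GG$ to obtain $\tilde\sigma:=\sigma\circ q\colon T\dashrightarrow G$. For every regular semisimple $t\in T$ in the domain of $\tilde\sigma$, $\tilde\sigma(t)$ lies in the fiber $\fG(\pG(t))$ and so is conjugate to $t$; moreover $\{g\in G:gtg^{-1}=\tilde\sigma(t)\}$ is a single coset $gT\in G/T$ by the centralizer remark. Assigning $t\mapsto gT$ yields a rational map $\psi\colon T\dashrightarrow G/T$, and the required $W$-equivariance follows from the identity
$\tilde\sigma(\dot w t\dot w^{-1})=\tilde\sigma(t)=gtg^{-1}=(g\dot w^{-1})(\dot w t\dot w^{-1})(g\dot w^{-1})^{-1}$,
which forces $\psi(\dot w t\dot w^{-1})=g\dot w^{-1}T=w\cdot\psi(t)$ by \eqref{G/T}.

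For (ii)$\Rightarrow$(i), given a $W$-equivariant rational $\phi\colon T\dashrightarrow G/T$, set $\sigma(t):=gtg^{-1}$ where $\phi(t)=gT$; this is well defined on the coset because any two representatives of $gT$ differ by an element of $T$, which commutes with $t\in T$. Using $\phi(\dot w t\dot w^{-1})=g\dot w^{-1}T$ one computes
$\sigma(\dot w t\dot w^{-1})=(g\dot w^{-1})(\dot w t\dot w^{-1})(g\dot w^{-1})^{-1}=gtg^{-1}=\sigma(t)$,
so $\sigma$ is $W$-invariant. It therefore descends to a rational map $\GG=T/W\dashrightarrow G$, which by construction satisfies $\pG\circ\sigma=\id$ on a dense open of $\GG$. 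The two constructions are manifestly inverse to each other, so the only point requiring care is the clean identification $\GG\cong T/W$ (together with the fact that rational functions on $\GG$ pull back bijectively to $W$-invariant rational functions on $T$); this is available from the toric-variety description of $\GG$ developed in Section \ref{singugen}, and is where the main work of the argument is hidden.
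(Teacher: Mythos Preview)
Your direction (ii)$\Rightarrow$(i) is fine and in fact cleaner than the paper's: the map $t\mapsto gtg^{-1}$ with $gT=\phi(t)$ is literally the composite $\psi\circ(\phi,\mathrm{id})$ of honest morphisms, its $W$-invariance is the computation you wrote, and descent along the finite quotient $T\to T/W\cong\GG$ is standard since $k(T)^W=k(T/W)$ for a finite group action. The paper instead proves that $\pG|_S$ is a separable bijection and invokes Zariski's Main Theorem, which is more work for the same conclusion.

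The direction (i)$\Rightarrow$(ii), however, has a real gap, and it is not where you think it is. You write ``Assigning $t\mapsto gT$ yields a rational map $\psi\colon T\dashrightarrow G/T$'', but you have only defined this map set-theoretically. Concretely, you are claiming that the projection
\[
\Gamma:=\{(t,gT)\in T_0\times G/T : gtg^{-1}=\tilde\sigma(t)\}\longrightarrow T_0
\]
is a (birational) isomorphism. Bijectivity follows from the centralizer remark, but in positive characteristic a bijective morphism need not be birational. What is actually needed is that the conjugation map
\[
\psi\colon G/T\times T_0\longrightarrow G_0,\qquad (gT,t)\mapsto gtg^{-1},
\]
is \'etale; then $\Gamma\to T_0$ is an \'etale bijection, hence an isomorphism, and your $t\mapsto gT$ is the composite of its inverse with the second projection. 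This \'etaleness is the content of the paper's Lemma~\ref{dpsi}, proved by an explicit tangent-space computation using the root subgroups, and it is precisely here---not in the birational identification $\GG\cong T/W$, which is the easy Chevalley-restriction input---that the substance of the argument lies. The paper then uses this \'etaleness (together with the fact that $d(\pG|_S)$ is an isomorphism at points of the section) to conclude via Zariski's Main Theorem that the lifted section is a morphism. Without Lemma~\ref{dpsi} or an equivalent separability statement, your (i)$\Rightarrow$(ii) is incomplete over fields of positive characteristic.
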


Then we consider the existence problem and prove the
following.

First, the next theorem answers the third {\sc Grothendieck}'s
question cited in the epigraph.

\begin{theorem}\label{anychar}  For every connected semisimple algebraic group
$G$, there is a rational section of $\pG$.
\end{theorem}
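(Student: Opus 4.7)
The plan is to reduce the problem, via Theorem~\ref{rsrcs}, to constructing a $W$-equivariant rational map $T\dashrightarrow G/T$, and then to produce one by descending a corresponding morphism on the simply connected cover.

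Concretely, Steinberg's theorem furnishes a regular section $\tilde\sigma\colon\tGG\to\tG$ of $\ptG$. The argument of Theorem~\ref{rsrcs} applied to $\tG$ turns $\tilde\sigma$ into a $W$-equivariant morphism $\tilde\varphi\colon\tT\to\tG/\tT$. Because $Z:=\ker\tau$ is a central subgroup of $\tG$ contained in $\tT$, the isogeny $\tau$ induces a canonical $W$-equivariant isomorphism $\tG/\tT\xrightarrow{\sim}G/T$; composing yields a $W$-equivariant morphism $\tilde\psi\colon\tT\to G/T$.

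The main step, and the principal obstacle, is to descend $\tilde\psi$ along the $W$-equivariant quotient $\tau|_{\tT}\colon\tT\to T=\tT/Z$ to a $W$-equivariant rational map $T\dashrightarrow G/T$. Since $Z$ is central it acts trivially on $G/T$, so the obstruction to descent is exactly the failure of $\tilde\psi$ to be $Z$-invariant on $\tT$; Steinberg's explicit section is not manifestly $Z$-equivariant, so this failure does occur in general and must be repaired by a rational modification of $\tilde\psi$. When $\tau$ is separable (i.e.\ $\mathrm{char}\,k=0$, or $p\nmid|Z|$), the extension $k(\tT)^W/k(T)^W$ is Galois with group $Z$, and one would twist $\tilde\psi$ by a suitable $Z$-valued coboundary to land at a $Z$-invariant, hence $k(T)^W$-rational, point of $G/T$, which yields the desired rational map. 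In the complementary case, where $\tau$ is purely inseparable and therefore bijective (by the Remark following Theorem~\ref{main}), the Galois argument is unavailable; instead one exploits part~(i) of Theorem~\ref{main} together with Bruhat-decomposition coordinates on the open cell $B^-B\subset G$ to construct a rational section directly, bypassing $\tilde\sigma$ altogether. Uniting the two cases delivers the required rational section of $\pG$ for every connected semisimple $G$.
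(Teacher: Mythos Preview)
Your strategy diverges from the paper's and contains a substantive gap at the crucial descent step.

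The paper does \emph{not} argue via Theorem~\ref{rsrcs}. Instead it works inside the framework of $(P,Q)$-varieties from \cite{CTKPR}: it shows (Lemma~\ref{G/Tversal}) that $G$ is a $(\tG,\tT)$-variety and that $\tG$ is a \emph{versal} $(\tG,\tT)$-variety. Steinberg's section then gives $\tG$ a rational section, and versality (via \cite[Theorem~3.6(a)]{CTKPR}) transfers this to every $(\tG,\tT)$-variety, in particular to $G$. Theorem~\ref{ratcs} is afterwards deduced from Theorem~\ref{anychar} through Theorem~\ref{rsrcs}, not the other way around. The versality argument is characteristic-free and avoids any case split.

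Your descent step, by contrast, is only asserted. You write that one ``would twist $\tilde\psi$ by a suitable $Z$-valued coboundary'' to obtain a $Z$-invariant $W$-equivariant rational map $\tT\dashrightarrow G/T$, but you do not say what the cocycle is, in which cohomology set it lives, or why it is a coboundary. The target $G/T$ is neither a group nor a torsor under an abelian group, so there is no evident way to form a cocycle from the discrepancy $t\mapsto\big(\tilde\psi(zt),\tilde\psi(t)\big)$, and no reason offered why a $Z$-fixed point should exist in the set of $W$-equivariant rational maps $\tT\dashrightarrow G/T$. Unwinding Theorem~\ref{rsrcs}, what you are really claiming is that a rational section of $\ptG$ can be modified to a $Z$-equivariant one; this is precisely equivalent to the theorem you are trying to prove, and the paper's versality machinery is exactly the mechanism that accomplishes it.

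Your treatment of the purely inseparable case is also incomplete. Theorem~\ref{main}(i) gives a cross-section $S=\tau(\tS)$, but $\pG|_S$ is then bijective and purely inseparable, hence generally not birational (cf.\ Subsection~6.A and Example~\ref{exm}); invoking ``Bruhat-decomposition coordinates'' does not by itself produce a rational section of $\pG$.
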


For
${\rm char}\,k=0$, this theorem has been proved earlier
in
\cite[Theorem 0.3]{CTKPR}.
In our proof we use the relevant characteristic free
results from \cite{CTKPR}, but bypass Theorem 2.12 from
this paper
(whose proof is based on the assumption ${\rm
char}\,k=0$) by exploring properties of $\pG$ and
proving that versality of $G$ holds in arbitrary
characteristic (Lemma \ref{G/Tversal}); this permits us to use {\sc Steinberg}'s
section of $\ptG$ in place of {\sc Kostant}'s
cross-section in ${\rm Lie}\,G$ used in \cite{CTKPR}.

\begin{corollary}\label{rtcrsct} In every connected semisimple algebraic group
$G$ there is a rational cross-section $S$ such that $\pG|_S\colon S\to \GG$ is a birational isomorphism.
\end{corollary}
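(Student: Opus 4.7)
The plan is to deduce Corollary~\ref{rtcrsct} directly from Theorem~\ref{anychar}. I would apply that theorem to obtain a rational section $\sigma\colon \GG\dashrightarrow G$ of the quotient morphism $\pG$, defined as an actual morphism on some dense open subset $U\subseteq \GG$ with $\pG\circ\sigma=\id^{}_U$, and then set
\[
S:=\overline{\sigma(U)}\subseteq G.
\]
Since $U$ is irreducible and $\sigma$ is a morphism, the image $\sigma(U)$ is irreducible and hence so is its Zariski closure $S$; by construction $S$ is a closed irreducible subvariety of $G$, which is the geometric shape required by the definition of a rational cross-section recalled in Section~\ref{rcs}.

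The crux of the argument is the assertion that $\pG|_S\colon S\to \GG$ is a birational isomorphism. The section $\sigma$ factors as a morphism $\sigma^{}_0\colon U\to S$ with dense image, and the identity $\pG|_S\circ\sigma^{}_0=\id^{}_U$ forces the induced field homomorphisms $(\pG|_S)^*\colon k(\GG)\to k(S)$ and $\sigma_0^*\colon k(S)\to k(U)=k(\GG)$ to be mutually inverse isomorphisms of function fields. Consequently $\pG|_S$ is a birational isomorphism, which is one of the two assertions of the corollary.

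Finally, I would conclude the single-point intersection property by invoking the standard fact that a birational isomorphism of irreducible varieties restricts to a biregular isomorphism over some dense open subset $V\subseteq \GG$; for every $v\in V$ the fiber $(\pG|_S)^{-1}(v)=S\cap\fG(v)$ is then a singleton, exactly as the definition of a rational cross-section demands. There is no genuine obstacle in this argument: Theorem~\ref{anychar} supplies the rational section, and what remains is a formal unpacking via the standard correspondence between dominant rational maps and function-field embeddings.
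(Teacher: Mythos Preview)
Your proposal is correct and follows essentially the same approach as the paper: apply Theorem~\ref{anychar} to obtain a rational section $\sigma$ and take $S$ to be the closure of its image. The paper's proof is a two-line reference to Subsection~\ref{re}.A (which merely asserts the conclusion), whereas you spell out the function-field argument showing that $\pG|_S$ is birational and then extract the single-point intersection property; this is a faithful expansion of what the paper leaves implicit.
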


Second, Theorems \ref{rsrcs} and \ref{anychar} yield the
following

\begin{theorem}\label{ratcs}
For every connected semisimple algebraic group $G$,  there is a
$W$-equivariant rational map $\,T\dashrightarrow G/T$.
\end{theorem}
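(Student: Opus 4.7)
The plan is essentially to observe that Theorem \ref{ratcs} follows by combining the two preceding results, Theorem \ref{anychar} and the equivalence in Theorem \ref{rsrcs}. Indeed, Theorem \ref{anychar} supplies, for every connected semisimple $G$, a rational section of the quotient morphism $\pG\colon G\to \GG$, so property (i) of Theorem \ref{rsrcs} is satisfied. The implication (i)$\Rightarrow$(ii) of that theorem then immediately yields the $W$-equivariant rational map $T\dashrightarrow G/T$ asserted here. So no further argument beyond invoking those two results is required.

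If one wanted to make the chain of reasoning self-contained, the main step would be the proof of Theorem \ref{rsrcs}, whose direction (i)$\Rightarrow$(ii) is the one used. A natural approach there is to start from a rational section $\sigma\colon \GG\dashrightarrow G$ of $\pG$, then exploit the fact that a generic conjugacy class meeting $T$ does so in a single $W$-orbit; combined with the identification of $\GG$ with $T/W$ (a toric quotient, as indicated in Section \ref{singugen}), $\sigma$ then produces a rational map $T\to T/W\dashrightarrow G$ whose image lies, generically, in a single $T$-coset. A careful bookkeeping of the $W$-actions on $T$ (by conjugation) and on $G/T$ (by the formula \eqref{G/T}) then produces the required $W$-equivariant rational map $T\dashrightarrow G/T$.

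In our proof, however, we take Theorem \ref{rsrcs} as given, and the only substantive input is Theorem \ref{anychar}; the latter is where all the characteristic-free work has already been done (via the versality argument alluded to in the introduction, using Steinberg's section of $\ptG$ rather than Kostant's cross-section). The present theorem is thus an immediate corollary, and the hardest part — establishing existence of a rational section of $\pG$ in arbitrary characteristic — is not repeated here.
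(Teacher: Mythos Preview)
Your proposal is correct and matches the paper's approach exactly: the paper states in the introduction that Theorems \ref{rsrcs} and \ref{anychar} yield Theorem \ref{ratcs}, and gives no further proof. Your parenthetical sketch of (i)$\Rightarrow$(ii) in Theorem \ref{rsrcs} differs somewhat from the paper's actual argument (which passes through the \'etale cover $\psi\colon G/T\times T_0\to G_0$ of Lemma \ref{dpsi} and Corollary \ref{vartheta}), but since you explicitly take that theorem as given, this does not affect the correctness of the proposal.
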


Section \ref{re} contains some remarks, questions, and
an example of a cross-section $S$ in $G$ such that
$\pG|_S$ is not separable (hence $S$ is not the image of
a section of~$\pG$).

\vskip 2mm

{\it Acknowledgements.} I am grateful to {\sc
Vik.\;Kulikov} for the inspiring remark, to {\sc A. Parshin} for the useful discussion and to {\sc J.-L.
Colliot-Th\'el\`ene}, {\sc G. Prasad}, and {\sc Z. Rei\-ch\-stein} for the
valuable comments. I am also indebted to the referee for
careful reading and thoughtful suggestions on the exposition.

\section{Cross-sections in \boldmath$G$}\label{cs}\label{crssect}

Given a torus $S$, below we denote by ${\rm X}(S)$ the character lattice of $S$ in additive notation. To avoid confusion between the additions in ${\rm X}(S)$ and $k[S]$, an element
$\lambda\in {\rm X}(S)$ considered as that of $k[S]$ is denoted by $\chi^\lambda$.
The value of
$\chi^\lambda$ at $s\in S$ is denoted by $s^\lambda$.

Fix a choice of Borel subgroup $\widehat B$ of $\tG$ and
maximal torus $\widehat T\subset \widehat B$.
Let $$\varpi_1,\ldots,\varpi_r\in
{\rm X}(\widehat T)$$ be the system of fundamental weights of
$\widehat T$ regarding~$\widehat B$.

Let $\varrho_i\colon \tG\to {\bf GL}(V_i)$ be an
irreducible representation of $\tG$ with $\varpi_i$ as
the highest weight. Let $
{\rm ch}_{\varpi_i}\in
k[\tG]^{\tG}$ be the character of $\varrho_i$.

Let $\widehat C$ be the center of $\tG$; it is a finite
subgroup of $\widehat T$. The conjugating action of
$\tG$ on itself commutes with the action of  $\widehat
C$ on $\tG$ by left translations. Therefore the latter
action  descends to  $\tGG$ and
$$\ptG\colon \tG\longrightarrow \tG/\!\!/\tG$$
becomes a $\widehat C$-equivariant morphism.

Endow the $r$-dimensional affine space ${\mathbf A}\!^r$
with the linear action of $\tT$ by the formula
\begin{equation}\label{action}
t\cdot(a_1,\ldots,a_r):=(t^{\varpi_1}a_1,
\ldots,t^{\varpi_r}a_r),\qquad t\in \tT, \hskip 2mm
(a_1,\ldots, a_r)\in {\mathbf A}\!^r.
\end{equation}

\begin{lemma}\label{lin}
\

\begin{enumerate}
\item[\rm(i)] The $\tT$-stabilizer of the point
$(1,\ldots,1)\in {\bf A}^r$ is trivial. In particular,
the considered action of $\;\tT$ on ${\mathbf A}\!^r$ is
faithful. \item[\rm(ii)] There is a $\widehat
C$-equivariant isomorphism
$$\lambda\colon \tGG\overset{\simeq}\longrightarrow  {\mathbf A}\!^r.$$
\end{enumerate}
\end{lemma}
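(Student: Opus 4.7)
The plan is to reduce both assertions to {\sc Steinberg}'s structure theorem for $k[\tG]^{\tG}$ together with Schur's lemma.

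For (i), I would use the fact that since $\tG$ is simply connected and semisimple, the fundamental weights $\varpi_1,\ldots,\varpi_r$ form a $\mathbb Z$-basis of the character lattice ${\rm X}(\tT)$. Consequently the homomorphism $\tT\to(k^*)^r$, $t\mapsto (t^{\varpi_1},\ldots,t^{\varpi_r})$, is an isomorphism of algebraic tori. The $\tT$-stabilizer of $(1,\ldots,1)$ under \eqref{action} consists of precisely those $t$ with $t^{\varpi_i}=1$ for all~$i$, and is therefore trivial; faithfulness then follows.

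For (ii), I would invoke {\sc Steinberg}'s theorem \cite{St1}: for simply connected semisimple $\tG$ the algebra $k[\tG]^{\tG}$ is freely generated by the irreducible characters ${\rm ch}_{\varpi_1},\ldots,{\rm ch}_{\varpi_r}$. This yields a canonical isomorphism
$$
\lambda\colon\tGG\overset{\simeq}\longrightarrow {\mathbf A}\!^r,\qquad \ptG(x)\longmapsto\bigl({\rm ch}_{\varpi_1}(x),\ldots,{\rm ch}_{\varpi_r}(x)\bigr).
$$
To verify $\widehat C$-equivariance, I would argue that for $c\in\widehat C$ the operator $\varrho_i(c)$ is central and hence commutes with the irreducible representation $\varrho_i$; Schur's lemma then gives $\varrho_i(c)=c^{\varpi_i}\cdot{\rm Id}$. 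Consequently
$$
{\rm ch}_{\varpi_i}(cx)={\rm tr}\bigl(\varrho_i(c)\varrho_i(x)\bigr)=c^{\varpi_i}{\rm ch}_{\varpi_i}(x),
$$
which matches exactly the $\widehat C$-action on ${\mathbf A}\!^r$ inherited from \eqref{action} via $\lambda$.

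The only substantive input is {\sc Steinberg}'s polynomiality theorem for $k[\tG]^{\tG}$, which is available in arbitrary characteristic; everything else is formal. I do not anticipate any serious obstacle beyond correctly citing and applying that theorem, and observing that under the basis property of the fundamental weights the characters $\chi^{\varpi_i}$ restricted to $\widehat C\subset\tT$ are precisely the eigencharacters by which $c$ acts on $V_i$.
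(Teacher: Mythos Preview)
Your proposal is correct and follows essentially the same route as the paper's proof: the triviality of the stabilizer in (i) is deduced from the fact that the $\varpi_i$ generate (indeed form a basis of) ${\rm X}(\tT)$, and (ii) is obtained from {\sc Steinberg}'s free-generation theorem for $k[\tG]^{\tG}$ together with Schur's lemma to identify the scalar by which $c\in\widehat C$ acts on $V_i$ as $c^{\varpi_i}$. The only cosmetic difference is that the paper separates Schur's lemma (giving some scalar $\mu_{i,c}$) from the identification $\mu_{i,c}=c^{\varpi_i}$ via the highest-weight vector, whereas you telescope these; your closing remark supplies exactly this missing link.
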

\begin{proof}
 Since $\varpi_1,\ldots,\varpi_r$
 generate ${\rm X}(\widehat T)$, we have
\begin{equation}\label{kerpi}
\bigcap_{i=1}^{r} \{t\in T\mid t^{\varpi_i}=1\}=\{e\}.
\end{equation}
But \eqref{action} entails that the $\tT$-stabilizer of
the point $(1,\ldots,1)$ coincides with  the
left-hand
side of equality \eqref{kerpi}.
Hence (i) follows from this equality.

 By \cite[Theorems 6.1, 6.16]{St1}  the $k$-algebra $k[\tG]^{\tG}$ is
 freely generated by $
 {\rm ch}_{\varpi_1},\ldots$
 $\ldots,
 {\rm ch}_{\varpi_r}$ and the morphism
\begin{equation*}
\theta\colon\tG\longrightarrow {\bf A}\!^r,\qquad
\theta(g)=(
{\rm ch}_{\varpi_1}(g),\ldots,
{\rm ch}_{\varpi_r}(g)),
\end{equation*}
is surjective. Hence there is
an isomorphism
$\lambda\colon \tGG\longrightarrow {\bf A}\!^r $
such that the following diagram is commutative: \ \vskip
-5mm
\begin{equation}\label{commu}
\begin{matrix}
\xymatrix@C=6mm@R=5mm{&\widehat G\ar[dl]_{\ptG}\ar[dr]^\theta&\\
\tGG\ar[rr]^\lambda&&{\bf A}\!^r}
\end{matrix}\quad .
\end{equation}

The morphism $\theta$ is $\widehat C$-equivariant.
Indeed, let $c\in \widehat C$. Since $\varrho_i$ is
irreducible, {\sc Schur}'s lemma entails that
$\varrho_i(c)=\mu_{i, c}\,{\rm id}_{V_i}$
for some
$\mu_{i, c}\in k$. On the other hand, since
$c\in \widehat T$, any highest vector in $V_i$
regarding  $\widehat B$ is an eigenvector
of $c$ with the eigenvalue $c^{\varpi_i}$.
Hence $\mu_{i, c}= c^{\varpi_i}$.
 Therefore, for every
$g\in\tG$, by \eqref{action} we have
\begin{align*}
\theta(cg)&=\big(
{\rm ch}_{\varpi_1}(cg),\ldots,
{\rm ch}_{\varpi_r}(cg)\big)\\
&=\big({\rm trace}\,\big(\varrho_1(cg)\big),
\ldots,{\rm trace}\,\big(\varrho_r(cg)\big)\big)\\
&=\big({\rm trace}\,\big(\varrho_1(c)\varrho_1(g)\big),
\ldots,{\rm
trace}\,\big(\varrho_1(c)\varrho_r(g)\big)\big)\\
&=\big({\rm
trace}\,\big(c^{\varpi_1}\!\varrho_1(g)\big),
\ldots,{\rm trace}\,\big(c^{\varpi_r}\!\varrho_r(g)\big)\big)\\
&=\big(c^{\varpi_1}{\rm
trace}\,\big(\varrho_1(g)\big),\ldots, c^{\varpi_r}{\rm
trace}\,\big(\varrho_r(g)\big)\big)\\
&=\big(c^{\varpi_1}
{\rm ch}_{\varpi_1}(g),\ldots, c^{\varpi_r}
{\rm ch}_{\varpi_r}(g)\big)\\
&=c\cdot \theta(g),
\end{align*}
as claimed.

   Since both $\theta$ and $\ptG$ are $\widehat C$-equivariant and $\ptG$ is surjective,
commutativity of diagram \eqref{commu} entails that
$\lambda$ is $\widehat C$-equivariant as well. This
proves (ii). \quad $\square$
\renewcommand{\qed}{}\end{proof}

\begin{corollary}\label{no}
 Let  $g$ be a nonidentity element of $\,\widehat C$.
Then there is no $g$-stable cross-section in $\tG$.
\end{corollary}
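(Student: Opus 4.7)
The plan is to derive a contradiction from the existence of a $g$-stable cross-section $S$ by exhibiting a point of $S$ that is fixed by the left-translation action of $g$; since $g\neq e$, left translation by $g$ has no fixed points on $\widehat G$, and this will be the contradiction.

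To produce such a fixed point I would work on the quotient. The action of $\widehat C$ on $\widehat G$ by left translations commutes with conjugation, so it descends to $\widehat G/\!\!/\widehat G$, and by Lemma \ref{lin}(ii) the isomorphism $\lambda$ transfers this to the linear action \eqref{action} of $\widehat T\supset \widehat C$ on $\mathbf A^r$. Crucially, whatever the element $c\in \widehat C$, the origin $(0,\ldots,0)\in\mathbf A^r$ is fixed by \eqref{action}. Pulling back through $\lambda$, I obtain a point $p\in \widehat G/\!\!/\widehat G$ with $g\cdot p=p$; hence the fiber $F:=\pi_{\widehat G}^{-1}(p)$ is stable under left multiplication by $g$.

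Now I exploit the defining property of a cross-section. Since $S$ is a cross-section, $S\cap F$ consists of a single point $s$. Both $S$ (by hypothesis) and $F$ (just shown) are $g$-stable, so $gs\in S\cap F=\{s\}$, i.e.\ $gs=s$. But $g\neq e$, so $gs\neq s$, a contradiction.

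I do not expect a genuine obstacle in this argument: the whole proof amounts to noticing that the $\widehat C$-action on $\widehat G/\!\!/\widehat G$, after identification with $\mathbf A^r$ via Lemma \ref{lin}(ii), is a linear action and therefore automatically has the origin as a common fixed point, together with the elementary bookkeeping that a $g$-stable cross-section must meet every $g$-stable fiber in a $g$-fixed point. The one thing worth double-checking is the transfer of the $\widehat C$-action: the equivariance of $\lambda$ established in Lemma \ref{lin}(ii) is exactly what is needed, and the surjectivity of $\pi_{\widehat G}$ guarantees that the fixed point on $\widehat G/\!\!/\widehat G$ is realized by an actual (necessarily $g$-stable) fiber.
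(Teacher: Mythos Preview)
Your proof is correct and follows essentially the same approach as the paper's: both use Lemma~\ref{lin}(ii) to locate a $\widehat C$-fixed point on $\widehat G/\!\!/\widehat G$ (the origin of $\mathbf A^r$), then use $\widehat C$-equivariance of $\pi_{\widehat G}$ together with the single-point-intersection property of a cross-section to produce a $g$-fixed point on $\widehat S$, contradicting freeness of left translation. The only cosmetic difference is that the paper phrases the middle step as ``$\pi_{\widehat G}|_{\widehat S}$ is a bijective $g$-equivariant morphism, so a fixed point downstairs lifts to a fixed point upstairs,'' while you phrase it as ``the fiber over the fixed point is $g$-stable and meets $S$ in one point.''
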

\begin{proof}  Assume the contrary and let $\tS$ be a 
$g$-stable cross-section in $\tG$. Since $\ptG$ is
$\widehat C$-equivariant, $\ptG|_{\tS}\colon \tS\to \tGG
$
is a bijective
$g$-equivariant morphism.
As, by Lemma \ref{lin}(ii),
there is a point of $\tGG$ fixed by $\widehat C$, hence
by $g$,
this
implies that there is a point of
$\widehat S$
fixed by $g$. But  for the action of $\widehat C$ on
$\tG$ by left translations, the stabilizer of every
point is trivial, a contradiction with $g\neq e$.
\quad $\square$
\renewcommand{\qed}{}\end{proof}

Given an element $h$ of an algebraic group $H$, we shall
denote its conjugacy class in $H$ by $H(h)$:\vskip -1mm
\begin{equation}\label{cc}
H(h):=\{shs^{-1}\mid s \in H\}.
\end{equation}
\vskip 2mm

\begin{lemma}\label{prop}
Let $H$ and $\widetilde H$ be connected algebraic groups
and let $\sigma\colon \widetilde H\to H$ be an isogeny.
Then the following properties hold:
\begin{enumerate}
\item[\rm(i)] $\sigma$ is a finite morphism;
\item[\rm(ii)] $\sigma\big(\widetilde
H(h)\big)=H\big(\sigma(h)\big)$ and $\dim \widetilde
H(h)=\dim H\big(\sigma(h)\big)$
for every $h\in \widetilde H$;
\item[\rm(iii)] if $\,\widetilde H(h)$ is a regular
conjugacy class in $\widetilde H$ {\rm(}i.e., that of
the maximal dimension{\,\rm)}, then
$\,\sigma\big(\widetilde H(h)\big)$ is a regular
conjugacy class in $H$; \item[\rm(iv)] if $H$ and
$\widetilde H$ are semisimple, then for every $h\in
\widetilde H$, $$\sigma\big(\pi^{-1}_{\widetilde
H}\big(\pi^{}_{\widetilde H}(h)\big)\big)
=\pi^{-1}_{H}\big(\pi^{}_{H}\big(\sigma(h)\big)\big).$$
\end{enumerate}
\end{lemma}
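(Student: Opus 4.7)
My plan is to dispatch parts (i)--(iv) in order, using each in the proof of the next.

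For (i), the fibers of the group homomorphism $\sigma$ are the cosets of the finite normal subgroup $\ker\sigma$, so every fiber is finite. Since $\widetilde H \to \widetilde H/\ker\sigma$ is a finite morphism (quotient by a finite group acting freely on a variety), and $\sigma$ factors as this quotient composed with the induced bijective homomorphism $\widetilde H/\ker\sigma \to H$, which is itself finite by a standard descent argument, $\sigma$ is finite.

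For (ii), since $\sigma$ is a surjective homomorphism, the direct computation $\sigma(\widetilde s h \widetilde s^{-1})=\sigma(\widetilde s)\sigma(h)\sigma(\widetilde s)^{-1}$ combined with surjectivity immediately gives $\sigma(\widetilde H(h))=H(\sigma(h))$. The equality of dimensions follows from (i): $\sigma$ restricted to $\widetilde H(h)$ has finite fibers, so the image has the same dimension. Part (iii) is then an easy consequence: if $h'\in H$ is arbitrary, choose $\widetilde h'\in\sigma^{-1}(h')$ (possible by surjectivity); then by (ii), $\dim H(h')=\dim\widetilde H(\widetilde h')\le \dim \widetilde H(h)=\dim H(\sigma(h))$, so $H(\sigma(h))$ is of maximal dimension.

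Part (iv) is the main step. The key ingredients are the result of Steinberg cited in the introduction, namely that every fiber of $\pi^{}_G$ (for semisimple $G$) is the closure of a single regular conjugacy class, and the fact that $\sigma$ is finite (hence closed) by (i). First, because $\sigma$ is $\widetilde H$-equivariant with respect to conjugation, $\sigma^*$ maps $k[H]^H$ into $k[\widetilde H]^{\widetilde H}$, yielding a morphism $\sigma^\sharp\colon \widetilde H/\!\!/\widetilde H\to H/\!\!/H$ making the obvious square commute; hence $\sigma(\pi_{\widetilde H}^{-1}(\pi_{\widetilde H}(h)))\subseteq \pi_H^{-1}(\pi_H(\sigma(h)))$. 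For the reverse inclusion, write $\pi_{\widetilde H}^{-1}(\pi_{\widetilde H}(h))=\overline{\widetilde H(h_0)}$ for the unique regular conjugacy class in this fiber (by Steinberg). By (ii) and (iii), $\sigma(\widetilde H(h_0))=H(\sigma(h_0))$ is a regular conjugacy class of $H$, and since $\sigma$ is closed we obtain $\sigma(\overline{\widetilde H(h_0)})=\overline{H(\sigma(h_0))}$. Since $\sigma(h)\in\sigma(\overline{\widetilde H(h_0)})$, and since each fiber of $\pi_H$ contains a unique regular conjugacy class (again Steinberg), this closure must coincide with $\pi_H^{-1}(\pi_H(\sigma(h)))$, giving the equality.

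The main obstacle is part (iv), which depends on assembling the finiteness from (i), the dimension transfer from (ii)--(iii), and the uniqueness of the regular class in a fiber from Steinberg's structure theory for semisimple groups; once these are lined up, the inclusion becomes an equality.
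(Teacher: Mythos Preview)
Your argument is correct and follows essentially the same route as the paper: parts (ii)--(iii) are identical, and your proof of (iv) uses the same three ingredients (fibers of $\pi$ are closures of regular classes, $\sigma$ is closed, and (iii)), only spelled out in more detail. Two minor remarks: for (i), the paper argues more directly that since $\widetilde H$ and $H$ are normal and all fibers of $\sigma$ have the same finite cardinality, $\widetilde H$ is the normalization of $H$ in $k(\widetilde H)$ and $\sigma$ is the normalization map, hence finite---this avoids the slightly vague ``standard descent argument'' for the bijective factor; and in (iv), your construction of $\sigma^\sharp$ to obtain one inclusion is unnecessary, since your second paragraph already establishes the full equality $\sigma(\overline{\widetilde H(h_0)})=\overline{H(\sigma(h_0))}=\pi_H^{-1}(\pi_H(\sigma(h)))$ directly.
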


\begin{proof}
The varieties $H$ and $\widetilde H$ are normal (even
smooth) and  the fiber of $\sigma$ over every point of
$\,H$ is a finite set whose cardinality does not depend
on this point.
Hence (cf.\;\cite[Sect.\,2, Cor.\,3]{G}) $\widetilde H$
is the normalization of $\,H$ in the field of rational
functions on $\widetilde H$ and $\sigma$ is the
normalization map. This proves (i).

The first equality in (ii) holds as $\sigma$ is an
epimorphism of groups. The second follows from the
first and theorem on dimension of fibers
\cite[AG\,10.1]{Bor}. This proves~(ii).

As $\sigma$ is surjective, (iii) follows from (ii).

Since the fibers of $\pi^{}_{\widetilde H}$ and
$\pi^{}_{H}$ are the closures of regular conjugacy
classes and, by (i), the map $\sigma$ is closed, (iv)
follows from (iii).
\quad $\square$
\renewcommand{\qed}{}\end{proof}

\begin{corollary}\label{insep}
Let $\widetilde G$ be a connected semisimple algebraic
group and let $\sigma\colon \widetilde G\to G$ be a
bijective isogeny.
\begin{enumerate}
\item[\rm(i)] If $\,\widetilde S$ is a cross-section in
$\widetilde G$, then $\sigma (\widetilde S)$ is a
cross-section in $G$. \item[\rm(ii)] If $\,S$ is a
cross-section in $G$, then $\sigma^{-1}(S)$ is a
cross-section in $\widetilde G$.
\end{enumerate}
The same holds
if {\rm``}\!cross-section{\rm''} is replaced with
{\rm``}rational cross-section{\rm''}.
\end{corollary}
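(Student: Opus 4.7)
The plan is to leverage Lemma \ref{prop} almost directly: once we know $\sigma$ is a finite bijective morphism that carries fibers of $\pi^{}_{\widetilde G}$ onto fibers of $\pi^{}_{G}$, the cross-section property transports across $\sigma$ in both directions essentially by set-theoretic bookkeeping.

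For part (i), I would first verify that $\sigma(\widetilde S)$ is closed and irreducible. Irreducibility follows because $\widetilde S$ is irreducible and $\sigma$ is continuous; closedness follows from Lemma \ref{prop}(i), since a finite morphism is closed. For the single-point intersection property, I would fix a fiber $F=\pi^{-1}_{G}(\pi^{}_{G}(g))$ and pick any $\widetilde g\in\widetilde G$ with $\sigma(\widetilde g)=g$ (possible since $\sigma$ is bijective). Lemma \ref{prop}(iv) gives $F=\sigma(\pi^{-1}_{\widetilde G}(\pi^{}_{\widetilde G}(\widetilde g)))$, and because $\sigma$ is injective we may commute $\sigma$ past the intersection:
\begin{equation*}
\sigma(\widetilde S)\cap F=\sigma\bigl(\widetilde S\cap \pi^{-1}_{\widetilde G}(\pi^{}_{\widetilde G}(\widetilde g))\bigr).
\end{equation*}
The right-hand side is a single point because $\widetilde S$ is a cross-section in $\widetilde G$.

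For part (ii), since $\sigma$ is finite and bijective it is a homeomorphism, so $\sigma^{-1}(S)$ is closed and irreducible. For any fiber $\widetilde F=\pi^{-1}_{\widetilde G}(\pi^{}_{\widetilde G}(\widetilde g))$ of $\pi^{}_{\widetilde G}$, the same Lemma \ref{prop}(iv) shows $\sigma(\widetilde F)=\pi^{-1}_{G}(\pi^{}_{G}(\sigma(\widetilde g)))$, and injectivity of $\sigma$ gives
\begin{equation*}
\sigma^{-1}(S)\cap \widetilde F=\sigma^{-1}\bigl(S\cap \pi^{-1}_{G}(\pi^{}_{G}(\sigma(\widetilde g)))\bigr),
\end{equation*}
again a single point.

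For the rational case, the main point I would make is that the bijective isogeny $\sigma$ induces a bijective morphism $\bar\sigma\colon \widetilde G/\!\!/\widetilde G\to \GG$ on categorical quotients (by the universal property, using Lemma \ref{prop}(iv) for bijectivity on points); being bijective and finite, $\bar\sigma$ is a homeomorphism in the Zariski topology and therefore carries dense open subsets to dense open subsets in both directions. If $\widetilde S$ is a rational cross-section in $\widetilde G$, with single-point intersection property over a dense open $\widetilde U\subseteq \widetilde G/\!\!/\widetilde G$, then the same argument as in (i) works fiber by fiber for all fibers over the dense open subset $\bar\sigma(\widetilde U)\subseteq \GG$; symmetrically for (ii). The one thing one has to be slightly careful about is that irreducibility and closedness of $\sigma(\widetilde S)$ (resp.\ $\sigma^{-1}(S)$) do not require restricting to an open subset, so they are already handled above. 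I do not foresee a genuine obstacle here; the only mildly subtle ingredient is the homeomorphism property of $\bar\sigma$, and this follows immediately from finiteness of $\sigma$ combined with bijectivity.
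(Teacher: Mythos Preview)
Your argument is correct and follows the same route as the paper: the paper's proof simply notes that Lemma~\ref{prop}(i) makes the bijection $\sigma$ closed, hence a homeomorphism, and then says both claims (and their rational analogues) follow from this together with Lemma~\ref{prop}(iv). Your detour through the induced map $\bar\sigma$ on quotients is more than the paper uses, and your assertion that $\bar\sigma$ is \emph{finite} is not quite as immediate as you suggest (it does not follow formally from finiteness of $\sigma$ alone); but it is true, and in any case unnecessary---once $\sigma$ itself is a homeomorphism carrying fibers of $\pi^{}_{\widetilde G}$ to fibers of $\pi^{}_G$, the rational cross-section condition transports directly, since the relevant dense open saturated subsets of $\widetilde G$ and $G$ correspond under $\sigma$.
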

\begin{proof} By Lemma \ref{prop}(i) the bijective map $\sigma$ is closed. Hence
it is a homeomorphism. Both claims follow from this, the
definitions of cross-section and rational cross-section,
and Lemma \ref{prop}(iv). \quad $\square$
\renewcommand{\qed}{}\end{proof}

\begin{lemma}\label{qZ} Assume that there is a subgroup $Z$ of $\,\widehat C$ such that $G=\tG/Z$ and $\tau$ is the quotient morphism $\tG\to \tG/Z$. Then there is a morphism\vskip 1mm
\begin{equation}\label{phi}
\varphi\colon \tGG\longrightarrow \GG\end{equation}\vskip 2mm\noindent such
that
\begin{enumerate}
\item[\rm(i)] $(\GG, \varphi)$ is
a categorical quotient
for the action of $Z$\,on $\tGG$;
\item[\rm(ii)] the following diagram is commutative:
\begin{equation}\label{dia}
\begin{matrix}
\xymatrix{\tG\ar[r]^\tau\ar[d]_{\ptG}&G\ar[d]^\pG\\
\tGG\ar[r]^\varphi&\GG}
\end{matrix}\quad ;
\end{equation}
\item[\rm(iii)] for every point $x\in \tGG$, the
following equality holds:\vskip 1mm
\begin{equation}\label{fibers}
\tau\big(\pi^{-1}_{\tG}(x)\big)=
\pi^{-1}_{G}\big(\varphi(x)\big).
\end{equation}
\vskip 2mm
\end{enumerate}
\end{lemma}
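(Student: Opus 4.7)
The plan is to build $\varphi$ by universality, then verify the three properties in order (ii), (i), (iii).

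First I would construct $\varphi$. Since $\tau\colon\tG\to G$ is surjective and intertwines the conjugating action of $\tG$ on itself with the conjugating action of $G$ on itself (because $\tau$ is a homomorphism), the composition $\pG\circ\tau\colon\tG\to\GG$ is constant on conjugacy classes of $\tG$. As $k[\tG]^{\tG}$ separates fibers of $\ptG$ and $\pG\circ\tau$ is a $\tG$-invariant morphism into an affine variety, the universal property of the categorical quotient $\ptG$ produces a unique morphism $\varphi\colon\tGG\to\GG$ making diagram \eqref{dia} commute. This proves (ii).

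For (i), I would argue on the level of function algebras. The action of $Z$ on $\tG$ by left translations commutes with conjugation (because $Z\subset\widehat C$ is central), so it descends to an action on $\tGG$, and $\varphi$ is $Z$-invariant by construction. Since $G=\tG/Z$, the pullback $\tau^*$ identifies $k[G]$ with $k[\tG]^Z$; restricting to conjugation-invariants identifies $k[\GG]=k[G]^G$ with $k[\tG]^{\tG}\cap k[\tG]^Z=(k[\tG]^{\tG})^Z=k[\tGG]^Z$. Therefore $\varphi^*$ identifies $k[\GG]$ with $k[\tGG]^Z$. Since $\tGG$ is affine, this exactly says that $(\GG,\varphi)$ is the categorical (in fact, good) quotient of $\tGG$ by $Z$.

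For (iii), I would simply invoke Lemma \ref{prop}(iv) applied to the isogeny $\sigma=\tau$. Given $x\in\tGG$, surjectivity of $\ptG$ yields some $h\in\tG$ with $\ptG(h)=x$. By the commutativity established in (ii),
\begin{equation*}
\varphi(x)=\varphi(\ptG(h))=\pG(\tau(h)),
\end{equation*}
so Lemma \ref{prop}(iv) gives
\begin{equation*}
\tau(\pi^{-1}_{\tG}(x))=\tau(\pi^{-1}_{\tG}(\pi_{\tG}(h)))=\pi^{-1}_{G}(\pi_{G}(\tau(h)))=\pi^{-1}_{G}(\varphi(x)),
\end{equation*}
which is \eqref{fibers}.

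The only genuinely delicate point is the ring-theoretic identification in step (i), where one must be careful that "conjugation-invariants of $Z$-invariants" equals "$Z$-invariants of conjugation-invariants"; this is automatic because the two actions commute, but it is the step where finiteness of $Z$ and the good behaviour of quotients by a finite group are implicitly used. Steps (ii) and (iii) are then essentially formal consequences of the universal property of $\ptG$ and of the already-proved Lemma \ref{prop}.
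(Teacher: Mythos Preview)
Your proof is correct and follows essentially the same approach as the paper. The only cosmetic difference is that the paper constructs $\varphi$ by noting that $\tau^*$, $\pi_{\tG}^*$, $\pi_G^*$ are injective and solving $\tau^*\circ\pi_G^*=\pi_{\tG}^*\circ\varphi^*$, whereas you invoke the universal property of the categorical quotient; the algebra for (i) (including the implicit use of $k[G]^G=k[G]^{\tG}$ since $\tau$ is surjective) and the appeal to Lemma~\ref{prop}(iv) for (iii) are identical to the paper's argument.
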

\begin{proof} As $\tau^*$, $\pi_{\tG}^*$, and $\pi_G^*$
are injections, there is a unique morphism
\eqref{phi} such that $\tau^*\circ\pi_G^*=
\pi_{\tG}^*\circ\varphi^*$, i.e., diagram \eqref{dia} is
commutative.

Consider the action of  $\tG$  on $G$ via the isogeny
$\tau$ and the conjugating action of $G$ on itself. The
isogeny $\tau$ is then $\tG$-equivariant and
$\tG$-orbits in $G$ are $G$-conjugacy classes, so we
have $k[G]^G=k[G]^{\tG}$.  Since the conjugating action
of $\tG$ on itself commutes with the action of $Z$ by
left translations, we have\vskip 1mm
\begin{align*} \pi_{\tG}^*\big(\varphi^*(k[\GG])\big)&=
\tau^*\big(\pi_G^*(k[\GG])\big)=\tau^*\big(k[G]^G\big)=
\tau^*\big(k[G]^{\tG}\big)=
\big(\tau^*(k[G])\big){}^{\tG}\\&=
\big(k[\tG]^Z\big){}^{\tG}=\big(k[\tG]^{\tG}\big){}^Z
=\big(\pi_{\tG}^*(k[\tGG])\big){}^Z=
\pi_{\tG}^*\big(k[\tGG]^Z\big).
\end{align*}
\vskip 3mm\noindent
Thus, $\varphi^*(k[\GG])=k[\tGG]^Z$. This proves (i) and
(ii). Lemma \ref{prop}(iv) and commutativity of diagram
\eqref{dia}
imply (iii).\quad $\square$
\renewcommand{\qed}{}\end{proof}

Below, given a variety $Y$, we denote by ${\rm T}_{y,
Y}$ the tangent space of $Y$ at a point~$y$.

\begin{proof}[Proof of Theorem {\rm \ref{main}}]
First, we shall prove criterion (i).

\vskip 1mm ${\it Step}\;1.$ By {\sc Steinberg}'s theorem,
$\tG$ has a cross-section. Hence, by Corollary
\ref{insep}, if
$\tau$
is bijective, then there exists a cross-section in $G$
as well.

So we may assume that
$\tau$ is not bijective and we then have to prove that
there
is no cross-section in $G$. Solving this problem, we may assume that
$\tau$ is separable. Indeed, if this is not the case,
then by  \cite[Prop.\,17.9]{Bor} there exist a connected
semisimple algebraic group $\widetilde G$ and a
commutative diagram of isogenies
\begin{equation}\label{triang}
\begin{matrix}
\xymatrix@C=6mm@R=5mm{\widehat G\ar[rr]^\tau\ar[dr]_\mu&&G\\
&\widetilde G\ar[ur]_\sigma&}
\end{matrix}\quad ,
\end{equation}
where
$\mu$ is
separable
and $\sigma $ is
purely inseparable.
As $\sigma$ is bijective, Corollary \ref{insep} then
reduces the problem  to proving that there is no
cross-section in $\widetilde G$, i.e., we may replace
$G$ by $\widetilde G$ and $\tau$ by $\mu$.

So from now on we may (and shall) assume that
$\tau$ is a separable isogeny of degree $\geqslant 2$.
This means that  there is a nontrivial subgroup $Z$ of
$\widehat C$  such that $G=\tG/Z$ and $\tau$ is the
quotient morphism $\tG\to \tG/Z$.

\vskip 2mm

${\it Step}\;2.$ Now, arguing on the contrary, assume  that
there is a cross-section $S$ in~$G$.

\begin{claim}\

\begin{enumerate}
\item[\rm (i)] {\it For every point $x\in \tGG$, the intersection
\begin{equation}\label{inters}
\pi^{-1}_{\tG}(x) \cap \tau^{-1}(S)\end{equation}
is a nonempty subset of a single $Z$-orbit;
in particular, it is finite.}
\item[\rm (ii)] {\it There is a nonempty open subset $\,U$\,of $\,\tGG$ such that,  for every $x\in U$,
intersection \eqref{inters} is a single point.}
\end{enumerate}
\end{claim}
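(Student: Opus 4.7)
For part (i), the plan is to read off nonemptiness directly from equality \eqref{fibers}. Given $x\in\tGG$, let $s$ denote the unique point of $S\cap\pi^{-1}_G(\varphi(x))$, which exists because $S$ is a cross-section; by Lemma~\ref{qZ}(iii) the map $\tau$ sends $\pi^{-1}_{\tG}(x)$ onto $\pi^{-1}_G(\varphi(x))$, so some $\tilde g\in\pi^{-1}_{\tG}(x)$ satisfies $\tau(\tilde g)=s\in S$, and hence $\tilde g\in\tau^{-1}(S)$. For the $Z$-orbit statement, suppose $\tilde g_1,\tilde g_2\in\pi^{-1}_{\tG}(x)\cap\tau^{-1}(S)$. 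Then $\tau(\tilde g_i)\in S\cap\pi^{-1}_G(\varphi(x))=\{s\}$, so $\tau(\tilde g_1)=\tau(\tilde g_2)$, which forces $\tilde g_2\in Z\tilde g_1$ since the fibers of $\tau$ are $Z$-orbits under left translation. Thus the intersection lies in $Z\tilde g_1$ and is in particular finite.

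For part (ii), I would exploit $Z$-equivariance of the restricted morphism $\rho:=\ptG|_{\tau^{-1}(S)}\colon \tau^{-1}(S)\to\tGG$ and compare two counts of the cardinality of $(\varphi\circ\rho)^{-1}(\varphi(x))$. Via the isomorphism $\lambda$ of Lemma~\ref{lin}(ii), the $Z$-action on $\tGG$ is identified with the restriction to $Z\subseteq\tT$ of the faithful $\tT$-action on ${\mathbf A}\!^r$, and hence is faithful. Therefore the complement $U\subseteq\tGG$ of the union of the proper closed $z$-fixed-point loci for $z\in Z\setminus\{e\}$ is a dense open subset on which $\varphi^{-1}(\varphi(x))=Z\cdot x$ has exactly $|Z|$ distinct elements. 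The morphism $\rho$ is $Z$-equivariant because left multiplication by central $Z$ commutes with conjugation on $\tG$, hence descends to $\tGG$; combined with freeness of $Z$ on $\tau^{-1}(S)$ (inherited from the free left-translation action on $\tG$), for $x\in U$ one obtains the disjoint decomposition
$$(\varphi\circ\rho)^{-1}(\varphi(x)) \;=\; \bigsqcup_{z\in Z}\rho^{-1}(z\cdot x) \;=\; \bigsqcup_{z\in Z} z\cdot\rho^{-1}(x),$$
a disjoint union of $|Z|$ sets each of cardinality $|\rho^{-1}(x)|$.

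On the other hand, commutativity of diagram~\eqref{dia} identifies $\varphi\circ\rho$ with $\pG|_S\circ\tau|_{\tau^{-1}(S)}$. Since $\pG|_S\colon S\to\GG$ is bijective by the cross-section property and $\tau|_{\tau^{-1}(S)}\colon\tau^{-1}(S)\to S$ is a $Z$-torsor, the same fiber has exactly $|Z|$ elements. Equating the two counts gives $|Z|\cdot|\rho^{-1}(x)|=|Z|$, whence $|\rho^{-1}(x)|=1$, which is precisely the assertion of (ii). The main obstacle I anticipate is producing the open locus $U$ on which $Z$ acts freely; this is where faithfulness of the $Z$-action on $\tGG$---and hence the full strength of Lemma~\ref{lin}---is essential, after which the remainder is a direct cardinality comparison.
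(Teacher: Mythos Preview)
Your proof of part~(i) matches the paper's argument exactly: both use equality~\eqref{fibers} to see that the intersection maps under $\tau$ into the single point $S\cap\pi_G^{-1}(\varphi(x))$, hence lies in one fiber of $\tau$, i.e., in one $Z$-orbit. (You spell out nonemptiness a bit more explicitly than the paper does, but the content is identical.)

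For part~(ii) your argument is correct but noticeably more elaborate than the paper's. Both proofs use the same open set $U$\,---\,the locus in $\tGG$ where $\widehat C$ (hence $Z$) acts with trivial stabilizer, whose existence comes from Lemma~\ref{lin}\,---\,and both use $Z$-equivariance of $\ptG$. The paper then argues directly: if $g_1,g_2$ both lie in $\pi_{\tG}^{-1}(x)\cap\tau^{-1}(S)$, then by~(i) we have $g_2=zg_1$ for some $z\in Z$, whence $x=\ptG(g_2)=z\cdot\ptG(g_1)=z\cdot x$; freeness at $x\in U$ forces $z=e$. Your double count of $(\varphi\circ\rho)^{-1}(\varphi(x))$ reaches the same conclusion via $|Z|\cdot|\rho^{-1}(x)|=|Z|$, which is sound but packages the same three ingredients (part~(i) for finiteness, $Z$-equivariance, freeness on $U$) into a global cardinality comparison rather than a two-line local computation. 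The paper's route is shorter; yours has the mild advantage of making the role of the $Z$-torsor structure of $\tau$ explicit, but neither buys anything the other lacks.
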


\noindent {\it Proof of Claim $1$.} Consider diagram
\eqref{dia}. Since $S\cap
\pi^{-1}_{G}\big(\varphi(x)\big)$ is a single point $g$,
we deduce from \eqref{fibers}
that
intersection \eqref{inters}
is contained in
$\tau^{-1}(g)$. This proves (i) as the fibers of $\tau$
are $Z$-orbits.

By Lemma \ref{lin}(i) there is a nonempty open subset
$U$ in $\tGG$ such that  the $\widehat C$-stabilizer of
every point of $\,U$ is trivial. Take a point $x\in U$.
Assume that
intersection \eqref{inters} contains two points $g_1$
and $g_2\neq g_1$. By (i) there exists an element $z\in
Z$ such that  $g_2=zg_1$. As $\ptG$ is $\widehat
C$-equivariant,
$x=\ptG(g_2)=\ptG(zg_1)=z\cdot\ptG(g_1)=z\cdot x$. Thus,
$z$ belongs to the $\widehat C$-stabilizer of $x$. The
definition of $\,U$ then implies that $z=e$. Hence
$g_1=g_2$, a contradiction. This proves (ii). \quad
$\square$

\vskip 2mm

${\it Step}\;3.$ Since all the fibers of $\tau$ are finite,
every irreducible component of $\tau^{-1}(S)$ has
dimension $\leqslant \dim\,S=r$ and at least one of them
has dimension $r$.

\begin{claim}
\
\begin{enumerate}
\item[\rm(i)] {\it There is a unique $r$-dimensional irreducible
component $\tS$ of $\,\tau^{-1}(S)$.}
\item[\rm (ii)]
 $\tau(\tS)=S$.
\end{enumerate}
\end{claim}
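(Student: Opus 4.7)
My plan is to prove both parts simultaneously. As a first step, for any irreducible component $\tS_0$ of $\tau^{-1}(S)$ of dimension $r$, I want to show $\tau(\tS_0)=S$. Since $\tau$ is finite by Lemma \ref{prop}(i), the image $\tau(\tS_0)$ is closed and irreducible in $S$; finiteness of the fibers of $\tau$ gives $\dim\tau(\tS_0)=\dim\tS_0=r=\dim S$, forcing $\tau(\tS_0)=S$. In particular, part (ii) will follow as soon as uniqueness is settled.

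Next I would show that $\ptG(\tS_0)$ is dense in $\tGG$. By the cross-section property, $\pG|_S\colon S\to \GG$ is a bijection, so $\pG(S)=\GG$. From $\tau(\tS_0)=S$ and commutativity of diagram \eqref{dia},
\begin{equation*}
\varphi(\ptG(\tS_0))=\pG(\tau(\tS_0))=\pG(S)=\GG.
\end{equation*}
Since $\varphi$ is the quotient by the finite group $Z$ by Lemma \ref{qZ}(i), it is a finite morphism, so $\dim\ptG(\tS_0)\geqslant\dim\GG=r$. Combined with $\dim\ptG(\tS_0)\leqslant\dim\tS_0=r$ and the irreducibility of $\tGG$, this yields $\overline{\ptG(\tS_0)}=\tGG$. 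As the image of the irreducible $\tS_0$ under a morphism is constructible, $\ptG(\tS_0)$ in fact contains a dense open subset of $\tGG$.

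For uniqueness, suppose $\tS_0$ and $\tS_0'$ are two distinct $r$-dimensional components of $\tau^{-1}(S)$. By the previous step each contains a dense open subset $V_0$, $V_0'$ of $\tGG$ in its $\ptG$-image. Set $U^*:=U\cap V_0\cap V_0'$, with $U$ the open set from Claim 1(ii); then $U^*$ is nonempty and open in $\tGG$. For $x\in U^*$ pick $h\in \tS_0$ and $h'\in \tS_0'$ with $\ptG(h)=\ptG(h')=x$; both lie in $\pi^{-1}_{\tG}(x)\cap\tau^{-1}(S)$, which by Claim 1(ii) is a single point, so $h=h'\in \tS_0\cap\tS_0'$. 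Thus $\ptG(\tS_0\cap\tS_0')\supseteq U^*$, giving $\dim(\tS_0\cap\tS_0')\geqslant r$. But distinct irreducible components satisfy $\tS_0\cap\tS_0'\subsetneq\tS_0$, so $\dim(\tS_0\cap\tS_0')<r$, a contradiction. The main obstacle is the density assertion for $\ptG(\tS_0)$, which pins together the cross-section bijection $\pG|_S$, the diagram \eqref{dia}, and the finiteness of $\varphi$; once it is in place, the rest is a short application of Claim 1(ii).
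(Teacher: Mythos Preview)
Your proof is correct and follows essentially the same route as the paper: establish $\tau(\tS_0)=S$ via finiteness of $\tau$, deduce that $\ptG(\tS_0)$ contains a dense open subset of $\tGG$ using commutativity of diagram \eqref{dia} and finiteness of $\varphi$, and then invoke Claim~1(ii) over $U$ to force any two $r$-dimensional components to coincide. The only cosmetic difference is in the final step: the paper observes that $\pi_{\tG}^{-1}(V)\cap\tS=\pi_{\tG}^{-1}(V)\cap\tS'$ are nonempty open subsets of the irreducible sets $\tS$, $\tS'$ and hence have the same closure, whereas you reach the same contradiction via the dimension of $\tS_0\cap\tS_0'$.
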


\noindent{\it Proof of Claim $2$.} Let $\tS$ be an
$r$-dimensional irreducible component of
$\,\tau^{-1}(S)$.
Then $\tau(\tS)$ contains an open subset of $S$. Since
$\tau$ is closed, this proves (ii).

From (ii) we conclude that
\begin{equation}\label{surrr} \pG\big(\tau(\widehat
S)\big)=\GG.
\end{equation} But by Lemma \ref{qZ} the fibers of $\varphi$ in commutative diagram \eqref{dia} are finite. This and \eqref{surrr} imply that $\ptG(\tS)$ contains a nonempty open subset of $\tGG$.

Let now $\tS'$ be another $r$-dimensional irreducible
components of $\,\tau^{-1}(S)$. Then, as above,
$\ptG(\tS')$ contains a nonempty open subset of $\tGG$
as well. Therefore, $\ptG(\tS)\cap\ptG(\tS')$ contains a
nonempty open subset $V$ of $\tGG$. We may assume that
$V\subseteq U$ for $U$ from Claim 1(ii). The latter then
yields that $\pi^{-1}_{\tG}(V)\cap
\tS=\pi^{-1}_{\tG}(V)\cap \tS'.$ As
both sides of this equality are open subsets of
respectively  $\tS$ and $\tS'$, we infer that $\tS=\tS'$.
This proves (i). \quad $\square$

\vskip 2mm

${\it Step}\;4.$  As $\tS$ is a unique $r$-dimensional
irreducible component of the $Z$-stable variety
$\tau^{-1}(S)$, we conclude that $\tS$ is $Z$-stable.
We shall now show that $\tS$ is a cross-section in
$\tG$. As this property contradicts Corollary \ref{no},
the proof of
(i) will be then completed.

\vskip 2mm

${\it Step}\;5.$ Let $x$ be a point of $\tGG$. As $S$ is a
section of $G$, the intersection $S\cap
\pi^{-1}_{G}\big(\varphi(x)\big)$ is a single point
$g\in G$. By Claim 2(ii) there is a point $\widehat
g\in\tS$ such that $\tau(\widehat g)=g$. Commutativity
of diagram \eqref{dia} then entails that $x$ and
$\widehat x:=\ptG(\widehat g)$ are in the same fiber of
$\varphi$. Since the fibers of $\varphi$ are
$Z$-orbits, there is an element $z\in Z$ such that
$x=z\cdot \widehat  x$. As $\ptG$ is $Z$-equivariant,
this yields the equality $\ptG(z\widehat g)=x$. But $z\widehat
g\in\tS$ as $\tS$ is $Z$-stable and $\widehat  g\in
\tS$. Hence $\pi^{-1}_{\tG}(x)\cap \tS\neq
\varnothing$, i.e.,\vskip -1mm
\begin{equation}\label{sur}
\ptG(\tS)=\tGG.
\end{equation}\vskip 2mm

${\it Step}\; 6.$
It follows from Claim 1(i),(ii) and \eqref{sur}
that
$\ptG|_{\tS}$ is a surjective morphism with finite
fibers, bijective over an open subset of $\tGG$. As
$\tG$ is normal, $\tGG$ is normal as well. Let
$\nu\colon \widetilde S\to \tS$ be the normalization.
Then the surjective morphism $\ptG|_{\tS}\circ\nu\colon
\widetilde S\to \tGG$ of normal varieties has finite
fibers and is bijective over an open subset of $\tGG$.
 Hence  $\ptG|_{\tS}\circ\nu$ is bijective
 (see\;\cite[Sect.\,2, Cor.\,2]{G}). Whence $\ptG|_{\tS}$ is bijective as well, i.e., $\tS$ is a cross-section in $\tG$. This completes the proof of (i).

\vskip 1mm

We now turn to the proof of (ii).

Let $S:=\sigma(\GG)$.
Take a point $x\in S$ and put $y:=\pG(x)$.
As $\pG|_S\colon S\to\GG$ is an isomorphism ($\sigma$ is
its inverse),
$d(\pG|_S)_x$ is an isomorphism as well. Hence
$(d\pG)_x$ is surjective. As $\dim {\rm T}_{y,
\GG}\geqslant \dim \GG=r$, this implies that there are
functions $f_1,\ldots, f_r\in k[G]^G$ such that
$(df_1)_x,\ldots, (df_r)_x$ are linearly independent. By
\cite[Theorem 8.7]{St1} this yields that $x$ is regular.
As $S$ intersects every fiber of $\pG$ at a single point
and every such fiber contains a unique regular orbit,
this proves (ii). Thus, the proof of Theorem \ref{main}
comes to a close.
\quad $\square$
\renewcommand{\qed}{}\end{proof}

\section{
Singularities of  \boldmath$\GG$ and generators of $k[G]^G$ and
$R(G)$ }\label{si}\label{singugen}

The
following statement,
whose role for us is
solely
heuristic,
shows that there is a
connection between
the existence of a cross-section in $G$ and smoothness
of $\GG$.

\begin{lemma}\label{smooth} Let ${\rm char}\,k=0$. If a surjective morphism $\alpha\colon X\to Y$ of irreducible va\-ri\-eties admits a section $\sigma\colon Y\to X$, then smoothness of $X$ implies smoothness of~$\;Y$.
\end{lemma}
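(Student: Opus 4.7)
My plan is to use the section $\sigma$ to realize $Y$ as a closed subvariety of $X$ and then exploit the fact that $\sigma\circ\alpha$ is an idempotent retraction onto this subvariety, reducing smoothness of $Y$ to a tangent-space rank computation.

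First I observe that since $X$ is separated, $\sigma$ is a closed immersion: the image $Z:=\sigma(Y)$ coincides with the equalizer $\{x\in X:(\sigma\circ\alpha)(x)=x\}$, which is closed in $X$, and $\sigma\colon Y\to Z$ is an isomorphism with inverse $\alpha|_Z$. It therefore suffices to prove that $Z$ is smooth. Setting $\pi:=\sigma\circ\alpha\colon X\to X$, the relation $\alpha\circ\sigma=\mathrm{id}_Y$ yields $\pi\circ\pi=\pi$, $\pi(X)\subseteq Z$, and $\pi|_Z=\mathrm{id}_Z$. Hence, for each $x\in Z$, the differential $d\pi_x\colon {\rm T}_{x,X}\to{\rm T}_{x,X}$ is an idempotent that maps into ${\rm T}_{x,Z}$ and acts as the identity on ${\rm T}_{x,Z}$; consequently $\mathrm{Im}(d\pi_x)={\rm T}_{x,Z}$, giving
\[
\dim {\rm T}_{x,Z}=\mathrm{rank}(d\pi_x).
\]

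Next, smoothness of $X$ makes ${\rm T}_X$ a vector bundle on $X$ and $d\pi$ a morphism of vector bundles, so lower semi-continuity of rank implies that the function $x\mapsto\mathrm{rank}(d\pi_x)$ attains its maximum (the generic rank) on an open dense subset of $Z$. Since the smooth locus of $Z$ is also open and dense (the base field being algebraically closed), and on it $\mathrm{rank}(d\pi_x)=\dim {\rm T}_{x,Z}=\dim Z$, the two open dense subsets intersect and the generic rank equals $\dim Z$. Hence $\mathrm{rank}(d\pi_x)\leqslant\dim Z$ for every $x\in Z$, and combining with the universal inequality $\dim {\rm T}_{x,Z}\geqslant\dim Z$ forces $\dim {\rm T}_{x,Z}=\dim Z$ at every point of $Z$. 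Thus $Z$, and hence $Y$, is smooth.

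The step requiring the most care is the identification $\mathrm{Im}(d\pi_x)={\rm T}_{x,Z}$, where the idempotency of $\pi$ coming from the section property is crucial; once this is in hand the semi-continuity argument is routine, and characteristic zero in fact does not enter the proof directly.
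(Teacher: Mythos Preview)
Your argument is correct and, as you note at the end, actually establishes the lemma without the hypothesis ${\rm char}\,k=0$. The paper's proof follows a closely related route: it works with $d\alpha$ rather than with $d\pi=d(\sigma\circ\alpha)$, observes that $d\alpha_{\sigma(y)}$ is surjective onto ${\rm T}_{y,Y}$ (so ${\rm rk}\,d\alpha_{\sigma(y)}=\dim{\rm T}_{y,Y}$), and then invokes generic smoothness in characteristic~$0$ to pin the generic rank of $d\alpha$ at $\dim Y$ before applying the same upper/lower semi-continuity of rank. Your key innovation is replacing this generic-smoothness step: by passing to the idempotent endomorphism $\pi$ and using that $\pi$ factors through $Z$ while $\pi|_Z={\rm id}_Z$, you identify ${\rm Im}(d\pi_x)$ with ${\rm T}_{x,Z}$ for every $x\in Z$, which lets you read off the generic rank as $\dim Z$ directly from the smooth locus of $Z$. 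Both proofs finish with the same semi-continuity argument (on the smooth variety $X$, where the tangent bundle is locally free), but yours trades the characteristic-zero input for a purely linear-algebraic identification and thereby strengthens the conclusion.
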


\begin{proof}
Arguing on the contrary, assume that $y$ is a singular
point of $Y$, i.e.,\vskip -1mm
\begin{equation}\label{>} \dim {\rm T}_{y, Y}>\dim
Y.
\end{equation}\vskip 2mm

Put $x=\sigma(y)\in X$. Since $\alpha\circ\sigma={\rm
id}_Y$, the composition $d\alpha_x\circ d\sigma_y$ is
the identity map of ${\rm T}_{y, Y}$. Hence $d\alpha_x$
is surjective, i.e., ${\rm rk}\,d\alpha_x=\dim {\rm
T}_{y, Y}$. By \eqref{>} this yields the inequality\vskip -1mm
\begin{equation}\label{rk}
{\rm rk}\,d\alpha_x>\dim Y.
\end{equation}\vskip 2mm

As ${\rm char}\,k=0$, there is a dense open subset $U$
of $X$ such that ${\rm rk}\,d\alpha_z=\dim Y$ for every
point $z\in U$, see \cite[14.4]{Har}. As $z\mapsto \dim
{\rm ker}\,d\alpha_z$ is the upper semi-continuous
function \cite[14.6]{Har}, we conclude that smoothness
of $X$ implies that ${\rm rk}\,d\alpha_z\leqslant\dim Y$
for every point $z\in X$. This contradicts \eqref{rk}.
\quad $\square$
\renewcommand{\qed}{}\end{proof}

This prompts to explore smoothness of $\GG$. The
answer is known:

\begin{theorem}[{{\rm \cite[\hskip -.6mm\S3]{St4},\hskip .3mm\cite[\hskip -.6mm Prop.\,4.1]{R1}\!,\hskip .3mm\cite[\hskip -.6mm Prop.\!13.3]{R2}\!,\hskip .3mmRemark\,\ref{smo}\,below}}]\label{sing}
The following properties are equivalent{\rm:}
\begin{enumerate}
\item[\rm(i)] $\GG$ is smooth{\rm;} \item[\rm(ii)] $\GG$ is
isomorphic to the  affine space ${\mathbf A}^r${\rm;}
\item[\rm (iii)] $G=G_1\times\cdots\times G_s$ where
every $G_i$ is either a simply connected simple
algebraic group or isomorphic to ${\bf SO}_{n_i}$ for an
odd $n_i$.
\end{enumerate}
\end{theorem}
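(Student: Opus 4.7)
My plan is to exploit the toric structure of $\GG$ alluded to in the introduction: $\GG$ is naturally identified with the affine toric variety $T/W$, where $T$ is a maximal torus of $G$ and $W$ acts on the character lattice ${\rm X}(T) \subset {\rm X}(\tT)$ (the latter inclusion reflecting the isogeny $\tau$) via the usual reflection representation. Since $\pG$ for a product group is the product of the $\pi_{G_i}$, and both smoothness of $\GG$ and the property of being an affine space pass through products, the first reduction is to assume $G$ is simple.

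For the implication (iii) $\Rightarrow$ (ii) in the simple case, Steinberg's theorem (already invoked in Lemma \ref{lin}) yields $\tGG \cong {\mathbf A}\!^r$ for simply connected simple $G$. For $G = {\mathbf{SO}}_{2m+1}$, one produces the isomorphism $\GG \cong {\mathbf A}\!^r$ directly: $k[G]^G$ is freely generated by the non-constant coefficients of the characteristic polynomial of the defining representation, a classical computation that may be read off from the description of ${\rm X}(T) \subset {\rm X}(\tT)$ as the sublattice of integer weights (as opposed to half-integer spin weights) and the $W$-invariant theory on this sublattice. The implication (ii) $\Rightarrow$ (i) is, of course, trivial.

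For the harder direction (i) $\Rightarrow$ (iii): if $\GG$ is smooth, then $T/W$ is a smooth affine toric variety of dimension $\dim T$, hence isomorphic to ${\mathbf A}\!^r$, so $k[T]^W$ is a polynomial algebra. By the Chevalley--Shephard--Todd theorem applied to the finite group $W$ acting on the lattice ${\rm X}(T)$ (equivalently, to its action on the torus $T$), this forces $W$ to act on ${\rm X}(T)$ as a group generated by pseudo-reflections at the lattice level, a condition strictly stronger than being a reflection group over $\mathbf Q$. Since $W$ is the ordinary Weyl group, the issue is for which intermediate lattices between the root lattice and the weight lattice this integrality property survives. A direct case-by-case check across the simple types (using the known fundamental groups) shows that besides the simply connected form, the only simple $G$ for which the restricted $W$-action on ${\rm X}(T)$ is still generated by pseudo-reflections is ${\mathbf{SO}}_{2m+1}$; in types $A$, $C$, $E_7$ and all simply-laced adjoint forms the pseudo-reflection property fails, and in types $D_{2m}$, $E_6$, $F_4$, $G_2$ intermediate lattices either do not exist or fail by the same check.

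The main obstacle is the classification step in (i) $\Rightarrow$ (iii): one must verify, type by type, that only the advertised $(G, {\rm X}(T))$ pass the pseudo-reflection test at the \emph{lattice} level, not just over the rationals. The rest of the argument is essentially the standard translation between smoothness of an affine toric quotient, polynomial invariants, and reflection groups, combined with Steinberg's theorem in the simply connected case.
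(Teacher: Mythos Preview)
Your overall architecture---reduce to simple factors, use the toric description of $\GG\cong T/W$, and finish by a case-by-case check---matches the paper's route (see Remark~\ref{smo} and Examples~\ref{singu}). But the key step in your (i)$\Rightarrow$(iii) has a genuine gap: the appeal to Chevalley--Shephard--Todd for the action of $W$ on ${\rm X}(T)$ does not do what you want. The simple reflections $s_{\alpha_i}$ already preserve \emph{every} intermediate lattice between the root lattice and the weight lattice, and each $s_{\alpha_i}$ has a codimension-one fixed sublattice; so $W$ is \emph{always} generated by reflections on ${\rm X}(T)$, for any $G$. Your ``lattice-level pseudo-reflection'' condition, as stated, is vacuous and cannot single out the simply connected groups and ${\bf SO}_{2m+1}$. (There is no straightforward multiplicative analogue of Chevalley--Shephard--Todd that turns ``$k[T]^W$ polynomial'' into a reflection condition on $W$; the multiplicative invariant theory here is genuinely different from the linear one.)

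The paper avoids this trap by applying the pseudo-reflection criterion to a \emph{different} finite group. It shows (for separable $\tau$) that $\GG\cong {\bf A}^r/Z$, where $Z\subseteq\widehat C$ acts \emph{linearly} on ${\bf A}^r$ via the fundamental weights (formula~\eqref{action}). Then Serre's criterion (\cite{Se3}, \cite[Theorem~7.2.1]{Ben}) applies cleanly: ${\bf A}^r/Z$ is smooth iff $Z$ is generated by pseudo-reflections for this linear action. Lemma~\ref{explcenter} tabulates $Z$ explicitly in coordinates adapted to \eqref{actiontorus}, and the case-by-case check (Examples~\ref{singu}) then reads off immediately which $Z$ contain enough pseudo-reflections---only the trivial $Z$ (simply connected case) and the order-two $Z$ sitting in the last coordinate for type ${\sf B}_r$ survive. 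Equivalently, via Theorem~\ref{toric}, the question is whether the monoid $\mathcal D=\widehat{\mathcal D}\cap {\rm X}(T)$ is free of rank $r$, i.e., $|\mathcal H|=r$ (Corollary~\ref{TfGG}); this is the correct replacement for your lattice criterion. Your (iii)$\Rightarrow$(ii) and the product reduction are fine.
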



\vskip 2mm

This criterion of smoothness of $\GG$ may be also
interpreted as that of the existence of $r$ generators
of the algebra of class functions on $G$. Below we
describe a minimal system of generators of this algebra
and singularities of $\GG$ in the general case. This
also yields a minimal system of generators of the
representation ring of $G$.

Let $B:=\tau(\widehat B)$ and $T:=\tau(\tT)$. They are
respectively  a Borel subgroup and a maxi\-mal torus of
$G$. We naturally identify ${\rm X}(\tT)$ with the lattice
${\rm X}(\tT)\otimes 1$ in ${\rm X}(\tT)\otimes_{\bf Z} \bf R$ and
view
${\rm X}(T)$
as a sublattice of ${\rm X}(\tT)$ identifying
$\chi^\mu$  with $\big(\tau|_{\tT}\big)^{\hskip -.5mm*}\!(\chi^\mu)$
for $\mu\in {\rm X}(T)$.

Let
 $N_{\tG}(\tT)$
 be the normalizer of $\tT$ in $\tG$.
As $\tau$ is a central isogeny, the Weyl group $W$ of $T$ is
naturally identified
with  $N_{\tG}(\tT)/\tT$, see \cite[Prop.\;11.20]{Bor}.
As $W$ is finite, a categorical quotient
for the conjugating action of $W$ on $T$ is, in fact, geometric, so
we denote
the corresponding quotient variety by $T/W$. Let\vskip -1mm
\begin{equation}\label{TW}
\pi_{W, T}^{\ }\colon T\to T/W
\end{equation}\vskip 2mm\noindent
be
the corresponding quotient morphism.

The root system $\Phi$ of $\widehat G$ regarding $\tT$
 (respectively, its positive part $\Phi_+$ regarding $\widehat B$) coincides with that of $G$ regarding $T$ (respectively, its positive part regarding $B$)
 \cite[Theorem 22.6(iii)]{Bor}. Let $$\alpha_1,\ldots,\alpha_r\in \Phi_+$$ be the set of all simple roots.
The weight lattice of $\Phi$ is ${\rm X}(\tT)$.

The
monoid of highest weights of simple $\tG$-modules (regarding $\widehat T$
and $\widehat B$) is\vskip -1mm
\begin{equation}\label{wD}
\widehat{\mathcal D}:={\bf N}\varpi_1+\cdots+ {\bf N}\varpi_r,
\quad\mbox{where\hskip 2mm ${\bf N}=\{0, 1, 2,\ldots\}$.}
\end{equation}\vskip 2mm\noindent
 and
that of simple $G$-modules (regarding $T$ and $B$)
is\vskip -1mm
\begin{equation}\label{DDDD}
\mathcal D:=\widehat{\mathcal D}\cap {\rm X}(T).
\end{equation}\vskip 2mm

If $\varpi\in{\mathcal D}$ and $E(\varpi)$ is a simple
$G$-module with $\varpi$ as the highest weight, we
denote by ${\rm ch}_{\varpi}\in k[G]^{G}$ the character of
$E(\varpi)$.

Given
a commutative monoid $M$, we denote by
${\bf Z}[M]$  the semigroup ring of $M$ over
${\bf Z}$.
 If $S$ is a submonoid of the multiplicative monoid of
 ${\bf Z}[M]$ whose elements are linearly independent over
  ${\bf Z}$,
then the subring of
${\bf Z}[M]$ generated by $S$ is
naturally identified
with
${\bf Z}[S]$. In particular, we consider
${\bf Z}[{\rm X}(T)]$ and
${\bf Z}[\mathcal D]$ as the subrings of
${\bf Z}[{\rm X}(\tT)]$. The former is stable with respect to
the natural action of $W$ on
${\bf Z}[{\rm X}(\tT)]$.
Following the notation
and terminology
of
\cite[VI.3.1]{Bou}, we
denote by $e^{\mu}$ the element of
${\bf Z}[{\rm X}(\tT)]$ corresponding to $\mu\in {\rm
X}(\tT)$ and,  for any element\vskip -1mm
\begin{equation}\label{suppo}
x=\sum_{\mu\in {\rm X}(\tT)} a_{\mu}e^\mu\in
{\bf Z}[{\rm X}(\tT)] ,\qquad a_\mu\in {\bf Z},
  \end{equation}\vskip 2mm\noindent
call
$\{\mu\in {\rm X}(\tT)\mid a_\mu\neq 0\}$ the {\it support} of $x$. The nonzero summands $a_\mu e^\mu$ in  \eqref{suppo} are called
the {\it terms} of $x$.

Given an algebraic group $H$, we denote by $R(H)$ the
{\it representation ring} of $H$: its additive group is
the Grothendieck group of the category of finite
dimensional algebraic $H$-modules with respect to exact
sequences and the multiplication is induced by tensor
product of modules. Using $\tau$, we  identify $R(G)$ in
the natural way with the subring of~$R(\tG)$.

If $E$ is a finite dimensional algebraic $G$-module and
$E_\mu$ is its weight space of a weight $\mu\in {\rm
X}(T)$, then the formal character of $E$,\vskip -1mm
\begin{equation}\label{ch}
{\rm ch}_{G}[E]:=\sum_{\mu\in {\rm X}(T)}(\dim E_\mu)
e^{\mu},
\end{equation}\vskip 2mm\noindent
is an element of ${\bf Z}[{\rm X}(T)]^W$ depending only
on the class $[E]$ of $E$ in $R(G)$. Clearly,\vskip -1mm
\begin{equation}\label{tens}
{\rm ch}_{G}[E\otimes E']={\rm ch}_{G}[E]\,{\rm
ch}_{G}[E'].
\end{equation}\vskip 2mm

According to \cite[3.6]{Se2}, the homomorphism of ${\bf
Z}$-modules

\begin{equation}\label{chch}
{\rm ch}^{}_{G}\colon R(G)\longrightarrow {\bf Z}[{\rm
X}(T)]^W,\qquad [E]\mapsto {\rm ch}^{}_{G}[E],
\end{equation}
\vskip 2mm

\noindent is an isomorphism. By \eqref{tens} it is an
isomorphism of rings.

Next, we fix on ${\rm X}(\tT)\otimes_{\bf Z}{\bf R}$ the following partial order $\geqslant$:

\begin{equation}\label{geq}
\mu\geqslant \nu \hskip 1mm \iff \hskip 1mm \mu-\nu\in {\bf N}\alpha_1+\cdots+{\bf N}\alpha_r,\
\end{equation}

\vskip 2mm
\noindent
cf.\;\cite[10.1]{Hum1}, \cite[31.2]{Hum}.  If $\mu$ is a maximal (with respect to $\geqslant$) element of the support of
element
\eqref{suppo},
then $a_\mu e^\mu$ is called the {\it maximal term} of $x$, cf.\;\cite[VI.3.2]{Bou}.


\begin{definition}
Let $\varpi\in \widehat{\mathcal D}$. We say that an
element $x\in {\bf Z}[{\rm X}(\tT)]^W$ is $\varpi$-{\it
sharp} if
the following property (M) holds:
\vskip 3mm
\begin{enumerate}
\item[(M)]
$e^{\varpi}$ is
the unique maximal term of $x$.
\end{enumerate}
\end{definition}

\vskip -.5mm

\begin{example}\label{sharp} The elements
${\rm ch}^{}_{\tG}[E(\varpi)]$ and
\begin{equation}\label{S}
S(e^\varpi):=
\sum_{\mu\in W\cdot \varpi} e^{\mu}
\end{equation}
are $\varpi$-sharp
(this follows, e.g., from
\cite[31.2, 31.3]{Hum}; cf. also \cite[VI.3.4]{Bou}).
\quad $\square$
\end{example}

By \eqref{geq} property (M) implies that the support of a
$\varpi$-sharp element $x$ lies in $\varpi+{\rm X}(T)$.
This and \cite[VI.3.4, formula (6)]{Bou} yield the equality
\begin{equation}\label{SSS}
x=S(e^\varpi)+\mbox{sum of some of the elements $\pm S(e^{\varpi'})$ with
$\varpi'\in \widehat{\mathcal D}$, $\varpi'<\varpi$}.
\end{equation}

\noindent By \cite[VI.3.2, Lemma 2]{Bou} if $x$ is $\varpi$-sharp and
$x'$ is $\varpi'$-sharp, then $xx'$ is
$(\varpi+\varpi')$-sharp.

Now, fix a $\varpi_i$-sharp element $x_{\varpi_i}\in {\bf
Z}[{\rm X}(\tT)]^W$, $i=1,\ldots, r$, and  put

\begin{equation*}
x_\varpi:= x_{\varpi_1}^{m_1}\cdots
x_{\varpi_r}^{m_r}\qquad\mbox{for}\qquad
\varpi=m_1\varpi_1+\cdots+m_r\varpi_r\in \widehat{
\mathcal D}.
\end{equation*}
\vskip 2mm

By \cite[VI.3.4, Theorem 1]{Bou} the set
$\{x_{\varpi}\mid \varpi\in \widehat{ \mathcal D}\}$ is
then a basis of the ${\bf Z}$-module ${\bf Z}[{\rm
X}(\tT)]^W\!.$ As $\{e^\mu\mid \mu\in {\rm X}(T)\}$ is a
basis of the ${\bf Z}$-module ${\bf Z}[{\rm X}(T)]$ and
the support of $x_\varpi$ lies in $\varpi+{\rm X}(T)$,
we deduce from this and \eqref{DDDD} that
$\{x_{\varpi}\mid \varpi\in\mathcal D\}$ is a basis of
the ${\bf Z}$-module ${\bf Z}[{\rm X}(T)]^W$. Hence the
homomorphism of the ${\bf Z}$-modules

\begin{equation}\label{psi}
\vartheta\colon {\bf Z}[{\rm X}(T)]^W\longrightarrow {\bf
Z}[\mathcal D], \qquad \vartheta(x_\varpi)=e^\varpi\quad
\mbox{for\quad $ \varpi\in \mathcal D$},
\end{equation}
\vskip 2mm

\noindent is an isomorphism. Since
$x_{\varpi+\varpi'}=x_\varpi x_{\varpi'}$, it is, in
fact, an isomorphism of rings.

As
$\{
\chi^\mu\mid \mu\in {\rm X}(T)\}$ is a $k$-basis of
$k[T]$
(cf.\;\cite[3.2.3]{Sp}) and $\chi^\mu \chi^\nu=\chi^{\mu+\nu}$,
the
$k$-linear
map

\begin{equation}\label{TX}
k[T]\longrightarrow k\otimes_{\bf Z}{\bf Z}[{\rm X}(T)],\qquad \chi^\mu\mapsto 1\otimes e^\mu,
\end{equation}

\vskip 2mm
\noindent
is an isomorphism of $k$-algebras. As this isomorphism is $W$-equivariant, its restriction to
$k[T]^W$ is an isomorphism of $k$-algebras

\begin{equation}\label{eta}
\eta\colon k[T]^W\longrightarrow \big(k\otimes_{\bf Z}{\bf Z}[{\rm X}(T)]\big){}\!^W=k\otimes_{\bf Z}{\bf Z}[{\rm X}(T)]^W
\end{equation}
\vskip 2mm
\noindent
(regarding the latter equality, see, e.g., \cite[VI.3.4]{Bou} or \cite[Prop. 3.3.1]{L}).


Finally, take into account that by \cite[6.4]{St1} the
map
\begin{equation}\label{G//G}
{\rm res}\colon
k[G]^G\longrightarrow
k[T]^W,\qquad f\mapsto f|_T,
\end{equation}
\vskip 2mm

\noindent is an isomorphism of $k$-algebras.

Summing up,
we obtain the following

\begin{theorem} \label{toric}
\
\begin{enumerate}
\item[\rm(i)] All the maps in the diagram
\vskip -2mm
$$
k[\GG]\xrightarrow{\pi_{G}^*}k[G]^G\xrightarrow{{\rm res}}
k[T]^W
\xrightarrow{\eta}k\otimes_{\bf Z}{\bf Z}[{\rm X}(T)]^W
\xrightarrow{{\rm
id}\otimes \vartheta} k\otimes_{\bf Z}{\bf Z}[\mathcal D]
$$
\vskip 2mm
{\rm(}see {\rm \eqref{piG}, \eqref{G//G}, \eqref{eta},
\eqref{psi}}{\rm)}
are isomorphisms of
$k$-algebras.
\item[\rm(ii)]  Let $F$ be
a subring of $k$. Then the image of $F\otimes_{\bf Z}\!R(G)$
in
$k[\GG]$ un\-der the composition of the following ring
isomorphisms
\end{enumerate}\vskip 1mm
\begin{equation}
\label{composition}
k\otimes_{\bf
Z}R(G)\xrightarrow{{\rm id}\otimes {\rm ch}^{}_G} k\otimes_{\bf Z}{\bf Z}[{\rm
X}(T)]^W\xrightarrow{\eta^{-1}}k[T]^W \xrightarrow{{\rm res}^{-1}}k[G]^G\xrightarrow{(\pi_{G}^*)^{-1}} k[\GG]
\end{equation}
\vskip 4mm
\begin{enumerate}
\item[]{\rm(}see {\rm \eqref{chch}, \eqref{eta}, \eqref{G//G}, \eqref{piG}}{\rm)} is an $F$-form of $\;k[\GG]$.
In particular, if
$\;{\rm char}\,k=0$,
then $R(G)$
is a ${\bf Z}$-form of $k[\GG]$.
\end{enumerate}
\end{theorem}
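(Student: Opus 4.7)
The plan is to verify part (i) by simply collecting the four individual isomorphism statements that have already been established in the preceding discussion, and then to derive part (ii) formally by tracing how an $F$-lattice on the group-ring side of the composition \eqref{composition} spreads to a corresponding $F$-form of $k[\GG]$.

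For (i), I would check the four arrows in turn. The map $\pi_G^*$ is an isomorphism of $k$-algebras by the very definition of $(\GG,\pi_G)$ as a categorical quotient; see \eqref{piG}. The restriction ${\rm res}$ is an isomorphism of $k$-algebras by the Chevalley-type result of {\sc Steinberg} \cite[6.4]{St1} recalled in \eqref{G//G}. The map $\eta$ is an isomorphism of $k$-algebras by the construction recorded in \eqref{eta}, obtained as the restriction to $W$-invariants of the standard identification \eqref{TX} of the affine algebra of $T$ with $k\otimes_{\bf Z}{\bf Z}[{\rm X}(T)]$. Finally, ${\rm id}\otimes\vartheta$ is an isomorphism of $k$-algebras because $\vartheta$ itself was shown to be a ring isomorphism in \eqref{psi}, using that $\{x_\varpi\mid \varpi\in\mathcal D\}$ is a ${\bf Z}$-basis of ${\bf Z}[{\rm X}(T)]^W$ together with the multiplicativity $x_{\varpi+\varpi'}=x_\varpi x_{\varpi'}$. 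Thus (i) reduces to assembling what is already in place.

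For part (ii), the composition \eqref{composition} is an isomorphism of $k$-algebras because each of its four factors is: ${\rm id}\otimes {\rm ch}^{}_G$ comes from the ring isomorphism ${\rm ch}^{}_G$ of \eqref{chch} by scalar extension to $k$, and the other three factors are inverses of isomorphisms from part (i). To extract the $F$-form statement, I would use that $R(G)\cong {\bf Z}[{\rm X}(T)]^W$ is a free ${\bf Z}$-module (with basis corresponding to $\{x_\varpi\mid\varpi\in\mathcal D\}$ by part (i)), so $F\otimes_{\bf Z}R(G)\hookrightarrow k\otimes_{\bf Z}R(G)$ and the canonical $k$-algebra map $k\otimes_F(F\otimes_{\bf Z}R(G))\to k\otimes_{\bf Z}R(G)$ is an isomorphism. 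Composing with \eqref{composition}, the image $R$ of $F\otimes_{\bf Z}R(G)$ in $k[\GG]$ satisfies $k\otimes_F R\cong k[\GG]$, i.e., $R$ is an $F$-form of $k[\GG]$. The final assertion is the special case $F={\bf Z}$, applicable when ${\rm char}\,k=0$.

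Since every ingredient is already in hand, there is no real obstacle: the proof amounts to bookkeeping of previously established isomorphisms and diagram \eqref{composition}. The only point deserving explicit mention is the freeness (equivalently, flatness) of $R(G)$ over ${\bf Z}$, used silently to pass between $F\otimes_{\bf Z}R(G)$ and its image inside $k\otimes_{\bf Z}R(G)$, and thence inside $k[\GG]$.
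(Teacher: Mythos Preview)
Your proposal is correct and matches the paper's approach exactly: the paper presents Theorem~\ref{toric} with the phrase ``Summing up, we obtain the following'', so its proof is precisely the assembly of the previously established isomorphisms \eqref{piG}, \eqref{G//G}, \eqref{eta}, \eqref{psi}, and \eqref{chch} that you have collected. Your explicit remark on the freeness of $R(G)$ over ${\bf Z}$ for part~(ii) is a welcome clarification of a point the paper leaves implicit.
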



 Recall (see, e.g., \cite{F}) that there is a bijection between
 $T$-isomorphism classes of
 affine toric varieties of $T$
  and $r$-dimensional
  rational convex polyhedral
 cones in ${\rm X}(T)\otimes_{\bf Z} {\bf R}$ (i.e.,
 convex cones
 with apex at the origin generated by a finite number of elements of ${\rm X}(T)$):
it juxtaposes to a cone $\mathcal C$ the affine variety
 $Y_{\mathcal C}$
 such that $k[Y_{\mathcal C}]:=k\otimes_{\bf Z}{\bf Z}[{\mathcal C}\cap {\rm X}(T)]$ endowed with the natural action of $T$. As $w\cdot {\rm X}(T)={\rm X}(T)$ for any element $w\in W$,  the varieties $Y_{\mathcal C}$ and  $Y_{w\cdot \mathcal C}$ are isomorphic  (but not $T$-isomorphic if $w\neq e$). As $W$ acts transitively on the set of Weyl chambers of $\Phi$, Theorem \ref{toric}(i) and formula \eqref{DDDD} then yield the following

\begin{corollary}  \label{tv} $\GG$ and $T/W$ are isomorphic to the {\rm(}underlying variety of \hskip 0mm{\rm)}\;af\-fine toric variety $Y_{\mathcal C}$, where
$\mathcal C$ is any Weyl chamber of the root system
$\,\Phi$.
\end{corollary}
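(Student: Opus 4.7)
The plan is to identify the chain of isomorphisms in Theorem~\ref{toric}(i) with the coordinate ring of the toric variety attached to the positive Weyl chamber, and then use transitivity of the $W$-action on Weyl chambers to handle the others.

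First I would fix the Weyl chamber $\mathcal C_+$ of $\Phi$ determined by the choice of $\widehat B$, i.e.\ the rational convex polyhedral cone in ${\rm X}(\tT)\otimes_{\bf Z}{\bf R}$ spanned by the fundamental weights $\varpi_1,\ldots,\varpi_r$. By the very definition of $\widehat{\mathcal D}$ in \eqref{wD} and the characterization of the dominant chamber, $\mathcal C_+\cap {\rm X}(\tT)=\widehat{\mathcal D}$. Intersecting with the sublattice ${\rm X}(T)\subseteq {\rm X}(\tT)$ and invoking \eqref{DDDD}, I get
\[
\mathcal C_+\cap {\rm X}(T)=\widehat{\mathcal D}\cap {\rm X}(T)=\mathcal D.
\]
By the very definition of the affine toric variety attached to a rational polyhedral cone recalled just before the corollary, this yields $k[Y_{\mathcal C_+}]=k\otimes_{\bf Z}{\bf Z}[\mathcal D]$.

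Second, Theorem~\ref{toric}(i) supplies a composite $k$-algebra isomorphism $k[\GG]\xrightarrow{\simeq}k\otimes_{\bf Z}{\bf Z}[\mathcal D]$ and, via the factor ${\rm res}$, also an isomorphism $k[T]^W\xrightarrow{\simeq}k\otimes_{\bf Z}{\bf Z}[\mathcal D]$. Since $W$ is finite, the categorical quotient $T/W$ is geometric with $k[T/W]=k[T]^W$, so combining with the previous paragraph gives ring isomorphisms $k[\GG]\cong k[Y_{\mathcal C_+}]$ and $k[T/W]\cong k[Y_{\mathcal C_+}]$. Passing to affine spectra yields variety isomorphisms $\GG\cong Y_{\mathcal C_+}$ and $T/W\cong Y_{\mathcal C_+}$.

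Third, for an arbitrary Weyl chamber $\mathcal C$ of $\Phi$, I would use that $W$ acts transitively on the set of Weyl chambers, so $\mathcal C=w\cdot\mathcal C_+$ for some $w\in W$. Since $W$ preserves the lattice ${\rm X}(T)$ (the Weyl group is naturally identified with $N_{\tG}(\tT)/\tT$ and acts on $T$ by conjugation, hence on ${\rm X}(T)$), the map $\mu\mapsto w\cdot\mu$ induces a monoid isomorphism $\mathcal D\xrightarrow{\simeq}\mathcal C\cap {\rm X}(T)$ and therefore a ring isomorphism ${\bf Z}[\mathcal D]\xrightarrow{\simeq}{\bf Z}[\mathcal C\cap {\rm X}(T)]$. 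This gives $Y_{\mathcal C}\cong Y_{\mathcal C_+}$ (as varieties, not $T$-equivariantly, exactly as flagged in the discussion preceding the corollary), and combining with the previous step concludes that $\GG\cong Y_{\mathcal C}\cong T/W$ for every Weyl chamber $\mathcal C$.

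There is no real obstacle here: the whole statement is a repackaging of Theorem~\ref{toric}(i) through the toric dictionary, and the only point demanding minor vigilance is to verify that $\mathcal C_+$, regarded as a cone in the {\it larger} weight lattice ${\rm X}(\tT)$, still meets the {\it smaller} lattice ${\rm X}(T)$ in precisely $\mathcal D$; this is immediate from \eqref{DDDD}.
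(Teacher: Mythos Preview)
Your proposal is correct and follows essentially the same approach as the paper: the paper's proof (contained in the paragraph immediately preceding the corollary) also invokes Theorem~\ref{toric}(i) together with \eqref{DDDD} to identify $k[\GG]\cong k[T/W]\cong k\otimes_{\bf Z}{\bf Z}[\mathcal D]$ with the coordinate ring of $Y_{\mathcal C_+}$, and then uses that $W$ preserves ${\rm X}(T)$ and acts transitively on Weyl chambers to pass to an arbitrary $\mathcal C$. Your write-up simply makes these steps explicit.
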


\begin{corollary} \label{toricstr} There are actions of $\;T$ on $\GG$ and $T/W$ with
dense open orbits.
\end{corollary}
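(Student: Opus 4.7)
The plan is to read off the claim directly from Corollary \ref{tv}. By that corollary, both $\GG$ and $T/W$ are isomorphic (as varieties) to the affine toric variety $Y_{\mathcal C}$ attached to a Weyl chamber $\mathcal C$; so it suffices to exhibit on $Y_{\mathcal C}$ a $T$-action with a dense open orbit. The Weyl chamber $\mathcal C$ is a strongly convex rational polyhedral cone, and the relevant $T$-action on $Y_{\mathcal C}$ is the standard one coming from the character grading on $k[Y_{\mathcal C}] = k\otimes_{\bf Z}{\bf Z}[{\mathcal C}\cap {\rm X}(T)]$, which is just the restriction of the natural $T$-action on $k[T] = k\otimes_{\bf Z}{\bf Z}[{\rm X}(T)]$ to the subring $k[Y_{\mathcal C}]$.

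To produce the dense open orbit, consider the lattice inclusion ${\mathcal C}\cap {\rm X}(T)\hookrightarrow {\rm X}(T)$. After tensoring with $k$ it yields an injective $k$-algebra homomorphism $k[Y_{\mathcal C}]\hookrightarrow k[T]$, corresponding to a morphism $\pi\colon T\to Y_{\mathcal C}$. A direct check on monomials $\chi^\mu$ shows that $\pi$ is $T$-equivariant for the natural $T$-actions on source and target, and $\pi$ is dominant because the associated ring map is injective and both varieties are irreducible. Consequently, $\pi(T) = T\cdot \pi(e)$ is a single $T$-orbit in $Y_{\mathcal C}$. Every orbit of an algebraic group action is locally closed, and a dense locally closed subset of an irreducible variety is open; hence $\pi(T)$ is a dense open $T$-orbit, which is exactly what the corollary asks for.

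There is no substantial obstacle here; the argument is essentially an unpacking of the toric-variety description of $Y_{\mathcal C}$ recalled just before the statement. The only point that needs a line of justification is the $T$-equivariance of $\pi$, which follows at once from the fact that the natural $T$-action on $k[{\rm X}(T)]$ preserves each character line $k\cdot e^\mu$ (equivalently, each line $k\cdot\chi^\mu$) and thus restricts to the defining $T$-action on the subring $k[{\mathcal C}\cap {\rm X}(T)]$.
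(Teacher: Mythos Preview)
Your proposal is correct and follows the same route as the paper: Corollary~\ref{toricstr} is stated there as an immediate consequence of Corollary~\ref{tv} together with the standard fact (recalled just before it, with reference to \cite{F}) that an affine toric variety $Y_{\mathcal C}$ carries a $T$-action with a dense open orbit. You have simply made that standard fact explicit by writing down the dominant $T$-equivariant morphism $T\to Y_{\mathcal C}$ and checking that its image is a dense open orbit; the only minor slip is calling ${\mathcal C}\cap {\rm X}(T)\hookrightarrow {\rm X}(T)$ a ``lattice inclusion'' when it is a monoid inclusion.
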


\begin{remark} {\rm This action of $T$ on $T/W$ cannot be lifted to $T$
making
quotient mor\-phism
 \eqref{TW} equivariant, see below Subsection 6.C.}
\end{remark}

Since toric varieties are rational,
Corollary \ref{tv} yields

\begin{corollary}  \label{ratio}
$\GG$ and $T/W$ are rational varieties.
\end{corollary}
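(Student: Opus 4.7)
The proof is essentially immediate from the preceding corollaries, so the plan is short. I would invoke Corollary \ref{tv} to reduce the statement to the rationality of the affine toric variety $Y_{\mathcal C}$ attached to a Weyl chamber $\mathcal C$ of $\Phi$, and then appeal to the general fact that every affine toric variety of a torus is rational.

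More concretely, the plan is as follows. First, by Corollary \ref{tv}, both $\GG$ and $T/W$ are isomorphic (as varieties) to $Y_{\mathcal C}$, so it suffices to show that $Y_{\mathcal C}$ is rational. Second, by construction $Y_{\mathcal C}$ is an affine toric variety of $T$; equivalently (cf.\ Corollary \ref{toricstr}), there is an action of $T$ on $Y_{\mathcal C}$ with a dense open orbit. Since the stabilizer of a point in the open orbit is trivial (as $\mathcal C$ is $r$-dimensional, so that $\mathcal C \cap {\rm X}(T)$ generates ${\rm X}(T)$ as a group), this open orbit is $T$-isomorphic to $T$ itself. Thus $Y_{\mathcal C}$ contains a dense open subset isomorphic to $T \cong (\mathbf{G}_m)^r$, which is an open subvariety of $\mathbf{A}^r$ and hence rational. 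Any variety containing a dense open rational subset is itself rational, so $Y_{\mathcal C}$—and therefore $\GG$ and $T/W$—is rational.

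I do not expect any real obstacle: all nontrivial content (the toric description of $\GG$ and $T/W$, the identification with $Y_{\mathcal C}$, and the presence of a dense open $T$-orbit) has already been established in Theorem \ref{toric} and Corollaries \ref{tv}, \ref{toricstr}. The only point worth mentioning explicitly in the write-up is that, since $\mathcal C$ is an $r$-dimensional cone, the monoid $\mathcal C \cap {\rm X}(T)$ generates the group ${\rm X}(T)$, ensuring that the open $T$-orbit in $Y_{\mathcal C}$ really is a copy of $T$ rather than a quotient of it.
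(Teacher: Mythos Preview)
Your proposal is correct and takes essentially the same approach as the paper: the paper simply states ``Since toric varieties are rational, Corollary \ref{tv} yields'' the result, and you unpack why toric varieties are rational via the dense open $T$-orbit. The extra detail about $\mathcal C$ being $r$-dimensional so that $\mathcal C\cap{\rm X}(T)$ generates ${\rm X}(T)$ is a reasonable justification, though strictly speaking rationality only requires a dense open orbit birational to some torus (or quotient thereof), which is automatic for any toric variety.
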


\begin{remark} {\rm The fact that ``multiplicative invariants'' of finite reflection groups are semigroup algebras is already in the literature, first implicitly, then explicitly, see
the historical account in
\cite[Int\-roduction]{L0}. Essentially, the main
ingredients date back to  \cite[\hskip -.5mm\S6]{St1}
and \cite[\hskip -1mm VI, \S3]{Bou}.}
\end{remark}

In the next statement Theorem \ref{toric} is applied to
finding a minimal system of ge\-ne\-rators of the
algebra $k[G]^G$ and that of the ring $R(G)$.

Let $\mathcal H$ be the Hilbert basis of the monoid
$\mathcal D$, i.e., the set of all its indecomposable
elements:\vskip -1mm
\begin{equation}\label{Hilbert}
\mathcal H=\mathcal D_+\setminus 2 \mathcal D_+
\quad\mbox{where}\quad \mathcal D_+:=\mathcal D\setminus
\{0\}, \quad 2\mathcal D_+:=\mathcal D_+ + \mathcal D_+.
\end{equation}\vskip 2mm\noindent
The set $\mathcal H$ is finite, generates $\mathcal D$,
and every generating set of $\mathcal D$ contains
$\mathcal H$ (see, e.g., \cite[3.4]{L}).

\begin{remark}
{\rm There is an  algorithm for efficient computing $\mathcal H$,
see \cite[13.2]{Stu} (see also Ex\-amp\-le \ref{singu}
below)}.
\end{remark}

\begin{theorem}\label{generators} \

\begin{enumerate}
\item[\rm(i)] The cardinality of every generating set of
the algebra $k[G]^G$ of class functions on $G$ is not
less than the cardinality of $\;\mathcal H$. The same
holds for every generating set of the representation
ring $R(G)$ of $G$. \item[\rm(ii)] $\{[E(\varpi)] \mid
\varpi\in \mathcal H\}$ is a
generating set of the ring $R(G)$. \item[\rm(iii)]
$\{{\rm ch}_{\varpi}\mid \varpi\in \mathcal H\}$ is a
generating set of the algebra
$k[G]^G$.
\end{enumerate}
\end{theorem}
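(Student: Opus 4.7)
My strategy is to transport the problem via Theorem~\ref{toric} to the combinatorial setting of ${\bf Z}[\mathcal D]$ and then exploit both the Hilbert basis property of $\mathcal H$ and the sharpness of characters.

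For (i), the isomorphisms $R(G)\cong {\bf Z}[\mathcal D]$ (via $\vartheta\circ{\rm ch}^{}_G$) and $k[G]^G\cong k[\mathcal D]$ (Theorem~\ref{toric}(i)) reduce the claim to: the semigroup ring ${\bf Z}[\mathcal D]$ cannot be generated by fewer than $|\mathcal H|$ elements as a ring, and $k[\mathcal D]$ cannot be generated by fewer than $|\mathcal H|$ elements as a $k$-algebra. Set $J:=\bigoplus_{\varpi\in \mathcal D_+}{\bf Z}e^\varpi\subseteq {\bf Z}[\mathcal D]$; formula~\eqref{Hilbert} gives $J^2=\bigoplus_{\varpi\in 2\mathcal D_+}{\bf Z}e^\varpi$, hence $J/J^2\cong \bigoplus_{\varpi\in \mathcal H}{\bf Z}e^\varpi$ is free of rank $|\mathcal H|$. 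If $g_1,\ldots,g_N$ generate ${\bf Z}[\mathcal D]$ as a ring, write $g_i=c_i+h_i$ with $c_i\in {\bf Z}$, $h_i\in J$. Each $e^\varpi$ with $\varpi\in\mathcal H$ is a polynomial $P(g_1,\ldots,g_N)$ with integer coefficients; expanding around $c=(c_1,\ldots,c_N)$ gives $P(c+h)\equiv P(c)+\sum_i\partial_iP(c)\,h_i\pmod{J^2}$, and since $e^\varpi\in J$ forces $P(c)=0$, the residues $\bar h_1,\ldots,\bar h_N$ generate $J/J^2$ as a ${\bf Z}$-module. Hence $N\geqslant |\mathcal H|$. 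The same argument over $k$ handles $k[\mathcal D]$.

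For (ii), let $R'\subseteq {\bf Z}[\mathcal D]$ be the subring generated by $\{\vartheta({\rm ch}^{}_G[E(\eta)]):\eta\in\mathcal H\}$; I must show $R'={\bf Z}[\mathcal D]$. By Example~\ref{sharp}, each ${\rm ch}^{}_G[E(\eta)]$ is $\eta$-sharp. Fix $\varpi\in\mathcal D$ and, using that $\mathcal H$ is a Hilbert basis of $\mathcal D$, write $\varpi=\sum_j n_j\eta_j$ with $\eta_j\in\mathcal H$. The product $z:=\prod_j{\rm ch}^{}_G[E(\eta_j)]^{n_j}$ is then $\varpi$-sharp by the Bourbaki lemma cited after~\eqref{SSS}. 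Combining~\eqref{SSS} with the change of basis between $\{S(e^{\varpi'})\}$ and the $x_{\varpi'}$-basis of ${\bf Z}[{\rm X}(T)]^W$ (both triangular with respect to~\eqref{geq}) yields $z=x_\varpi+\sum_{\varpi'<\varpi,\,\varpi'\in\mathcal D}c_{\varpi'}x_{\varpi'}$, and therefore
\[
y:=\vartheta(z)=e^\varpi+\sum_{\varpi'<\varpi,\,\varpi'\in\mathcal D}c_{\varpi'}e^{\varpi'}\in R'.
\]
The set $\{\varpi'\in\widehat{\mathcal D}:\varpi'\leqslant\varpi\}$ is finite (a standard fact for dominant weights), so the restriction of~\eqref{geq} to $\mathcal D$ is well-founded. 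I then prove $e^\varpi\in R'$ by induction on~$\leqslant$: the base case $\varpi=0$ is trivial, and the inductive step $e^\varpi=y-\sum c_{\varpi'}e^{\varpi'}$ lies in $R'$ since every $e^{\varpi'}$ with $\varpi'<\varpi$ does by hypothesis. Hence $R'={\bf Z}[\mathcal D]$.

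Part (iii) then follows from (ii) by base change: Theorem~\ref{toric}(ii) with $F=k$ says the composition~\eqref{composition} carries $k\otimes_{\bf Z}R(G)$ onto a $k$-form of $k[\GG]$, which must be all of $k[\GG]$, and sends each $1\otimes[E(\varpi)]$ to $(\pi_G^*)^{-1}({\rm ch}_\varpi)$. So the ring generators of $R(G)$ furnished by (ii) pull back to a $k$-algebra generating set of $k[G]^G$ consisting of the ${\rm ch}_\varpi$ for $\varpi\in\mathcal H$; minimality in both (ii) and (iii) is supplied by (i). The main obstacle, to my mind, is extracting the expansion of $z$ in the $\mathcal D$-indexed basis $\{x_{\varpi'}\}$ rather than in the a~priori larger $\widehat{\mathcal D}$-indexed basis; this rests on the fact that ${\rm ch}^{}_G[E(\varpi)]\in {\bf Z}[{\rm X}(T)]^W$, which forces only $\varpi'\in\mathcal D$ to appear, together with the Bourbaki lemma on products of sharp elements.
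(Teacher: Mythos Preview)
Your proof is correct and follows essentially the same line as the paper's. The only differences are presentational: for (i) the paper phrases the $J/J^2$ computation geometrically (embedding dimension of the toric variety at its $T$-fixed point), and for (ii) the paper invokes \cite[VI.3.4, Lemma~4]{Bou} on unitriangular change of basis from $\{S(e^\mu)\}$ to $\{M^\mu\}$ in place of your explicit induction on $\leqslant$; the content is identical.
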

\begin{proof}
(i) Let $Y$ be the affine toric variety of $T$ with
$k[Y]=k\otimes_{\bf Z}{\bf Z}[\mathcal D]$. The linear span $I$ of
$\{1\otimes e^\varpi\mid \varpi\in\mathcal D_+\}$ over $k$ is a
maximal $T$-invariant ideal in $k[Y]$. Hence $I/I^2$ is
the cotangent space of $Y$ at the $T$-fixed point $y$
where $I$ vanishes. As $I^2$ is the linear span of
$\{1\otimes e^\varpi\mid \varpi\in 2\mathcal D_+\}$ over $k$,
this and \eqref{Hilbert} yield the equalities
\begin{equation}\label{tange}
\dim {\rm T}_{y, Y}=\dim I/I^2= |\mathcal H|.
\end{equation}

Now take into account that, given an affine algebraic
variety $X$, the algebra $k[X]$ can be generated by $d$
elements if and only if $X$ admits a closed embedding in
${\bf A}\!^d$. Hence $d\geqslant \dim {\rm T}_{x, X}$
for every point $x\in X$. This, Theorem
\ref{toric},
and \eqref{tange} prove~(i).

\vskip 1mm

(ii) Let $\mu\in \mathcal D$. As $\mathcal H$ generates
$\mathcal D$, there is a decomposition
$$
\mu=\sum_{\varpi\in\mathcal H}a_{\varpi}\varpi,\qquad \mbox{where}\quad a_\varpi\in{\bf N}.
$$
Hence, by Example \ref{sharp},
\begin{equation}\label{Mmu}
M^\mu:=\prod_{\varpi\in\mathcal H}\big({\rm ch}^{}_G[E(\varpi)]\big)^{a_{\varpi}}
\end{equation}
is a $\mu$-sharp element of ${\bf Z}[{\rm X}(T)]^W$.
By \eqref{SSS} we have\vskip -1mm
\begin{equation}\label{MM} M^\mu=S(e^\mu)+
\mbox{sum of some of the elements $\pm S(e^{\mu'})$ with $\mu'\in
{\mathcal D}$, $\mu'<\mu$}.
\end{equation}
\vskip 2mm
\noindent But $\{S(e^\mu)\mid \mu\in \mathcal D\}$ is a
basis of the ${\bf Z}$-module ${\bf Z}[{\rm X}(T)]^W$
(see \cite[VI.3.4, Lemma 3]{Bou}).  By \cite[VI.3.4,
Lemma 4]{Bou}   we then deduce from \eqref{MM} that the
set $\{M^\mu\mid \mu\in\mathcal D\}$ generates the ${\bf
Z}$-module ${\bf Z}[{\rm X}(T)]^W$. This and \eqref{Mmu} imply that the
ring ${\bf Z}[{\rm X}(T)]^W$ is generated by the set
$\{{\rm ch}^{}_{G}[E(\varpi)]\mid \varpi\in\mathcal
H\}$. As \eqref{chch} is an isomorphism of rings, this
proves (ii).

\vskip 1mm

(iii) It follows from (ii) that the set $\{1\otimes
[E(\varpi)]\mid \varpi\in\mathcal H\}$ generates the
ring $k\otimes_{\bf Z}R(G)$. But formula \eqref{ch}
shows that ${\rm ch}_\varpi$ is the image of $1\otimes
E[(\varpi)]$ under the isomorphism
${\rm res}^{-1}\circ \eta^{-1}\circ ({\rm id}\otimes {\rm ch}^{}_G)$
(see diagram \eqref{composition}). This proves
(iii). \quad $\square$
\renewcommand{\qed}{}\end{proof}

 The proof of Theorem \ref{generators} and formula \eqref{tange} yield
 the following
\begin{corollary} \label{TfGG} 
The maximum of
the
function $x\mapsto \dim\,{\rm T}_{x, \GG}$ is equal to $|{\mathcal H}|$.
\end{corollary}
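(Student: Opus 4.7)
My plan is to prove the equality by establishing the two inequalities separately, both of which fall out essentially for free from the proof of Theorem \ref{generators} combined with formula \eqref{tange}.

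First, for the lower bound $\max_x \dim T_{x, \GG} \geqslant |\mathcal{H}|$: by Theorem \ref{toric}(i) the variety $\GG$ is isomorphic to the affine toric variety $Y$ with $k[Y] = k\otimes_{\bf Z}{\bf Z}[\mathcal D]$. Let $y\in Y$ be the $T$-fixed point at which the maximal $T$-invariant ideal $I$ (the $k$-span of $\{1\otimes e^\varpi\mid \varpi\in \mathcal D_+\}$) vanishes. Formula \eqref{tange}, already established in the course of proving Theorem \ref{generators}(i), states precisely that $\dim T_{y, Y} = |\mathcal{H}|$. Transporting $y$ back to $\GG$ via the isomorphism yields a single point witnessing the lower bound.

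Second, for the upper bound $\dim T_{x, \GG} \leqslant |\mathcal{H}|$ for every $x \in \GG$: by Theorem \ref{generators}(iii), the algebra $k[G]^G \cong k[\GG]$ is generated by the $|\mathcal{H}|$ elements $\{\mathrm{ch}_\varpi \mid \varpi \in \mathcal H\}$. As recalled in the proof of Theorem \ref{generators}(i), the existence of a generating set of cardinality $d$ for $k[X]$ of an affine variety $X$ is equivalent to the existence of a closed embedding $X \hookrightarrow \mathbf{A}\!^d$, which in turn forces $\dim T_{x, X} \leqslant d$ at every point $x$. Applying this with $X = \GG$ and $d = |\mathcal{H}|$ gives the upper bound.

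Combining the two inequalities, $\max_{x\in \GG}\dim T_{x, \GG} = |\mathcal{H}|$, and the maximum is attained at the distinguished point $y$ above. I do not foresee any obstacle: the essential content is already packaged in \eqref{tange} and Theorem \ref{generators}, and the argument amounts to reading off both inequalities from what was proved there.
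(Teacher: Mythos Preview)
Your proposal is correct and follows precisely the route the paper indicates: the corollary is stated as an immediate consequence of the proof of Theorem \ref{generators} together with formula \eqref{tange}, and you have unpacked exactly those two ingredients (the embedding bound from the generating set of cardinality $|\mathcal H|$ for the upper bound, and \eqref{tange} at the $T$-fixed point for the lower bound).
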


 In line with the general theory of toric varieties, as the Weyl chambers are simpli\-cial cones,
Corollary \ref{tv} implies, at least for ${\rm char} \,k=0$,
that $\GG$ and $T/W$ are isomorphic to the quotient of
${\bf A}^r$ by a linear action of a certain finite
abelian group and hence,  in
particular, $\GG$ and $T/W$ may have only finite
quotient singularities
\cite[Prop.\;1.25]{O}.
Below, for arbitrary ${\rm char} \,k$ and separable $\tau$, we prove the existence of
such a finite group and its
action on ${\bf A}^r$
by means of their explicit description. This yields an explicit description of
singularities of $\GG$ and that of the minimal generating sets
 of the algebra of class functions on $G$ and of
 the representation ring
of $G$.

 The
 assumption that $\tau$ is separable means  that there is a subgroup
$Z$ of $\tC$ such that $G=\tG/Z$ and $\tau$ is the
quotient morphism $\tG\to \tG/Z$. In this situation we
have \vskip 1mm
\begin{equation}\label{varpiX} {\rm X}(T)=\{\mu\in
{\rm X}(\tT)\mid c^\mu=1\ \mbox{for every $c\in
Z$}\}.\end{equation}
\vskip 2mm

For the action of $\tT$ on ${\bf A}^r$ defined by formula \eqref{action}, consider the $\tT$-orbit map of the point
$(1,\ldots,1)\in{\bf A}^r$:
\begin{equation}\label{iot}
\iota\colon \tT\longrightarrow {\bf A}^r,\qquad
\iota(t)=t\cdot (1,\ldots, 1),
\end{equation}
and identify $k[\tT]$ with $k\otimes_{\bf Z}{\bf Z}[{\rm X}(\tT)]$ by means of the isomorphism
\begin{equation*}
k[\tT]\longrightarrow k\otimes_{\bf Z}{\bf Z}[{\rm X}(\tT)],\qquad \chi^\mu\mapsto 1\otimes e^\mu.
\end{equation*}

The map $\iota^*\colon k[{\bf A}^r]\to
k\otimes_{\bf Z}{\bf Z}[{\rm X}(\tT)]$
is an embedding as $\iota$ is dominant by
Lemma \ref{lin}(i). Let $$y_1,\ldots, y_r$$ be the
standard coordinate functions on ${\bf A}^r$. Then
\eqref{action} and \eqref{iot} yield the equality\vskip 1mm
\begin{equation}\label{embed}
\iota^*(y_i)=1\otimes e^{\varpi_i}.\end{equation}\vskip 2mm

From \eqref{action} we deduce that $k[{\bf
A}^r]^Z$ is the linear span over $k$ of all the monomials
$y^{m_1}\cdots y^{m_r}$ with $m_1,\ldots, m_r\in {\bf
N}$ such that $c^{m_1\varpi_1+\cdots+m_r\varpi_r}=1$ for
every $c\in Z$. By \eqref{varpiX} the latter condition
is equivalent to the inclusion
$m_1\varpi_1+\cdots+m_r\varpi_r\in {\rm X}(T)$. This,
\eqref{embed}, \eqref{wD}, and \eqref{DDDD} imply the equality\vskip 1mm
$$\iota^*\big(k[{\bf A}^r]^Z\big)=k\otimes_{\bf Z}{\bf Z}[\mathcal D].$$\vskip 2mm

Since $Z$ is finite, a categorical quotient for the action of $Z$ on ${\bf A}^r$ is geometric and so we denote  the corresponding quotient variety by ${\bf A}^r/Z$. Thus, taking
into account Theorem \ref{toric}, we obtain the
following isomorphisms of $k$-algebras:\vskip -1mm
\begin{equation*}\label{isoms}
\xymatrix{&&&
k[T/W]\ar[d]^{\pi_{W, T}^*}&&
\\
k[{\bf A}^r]^Z\ar[r]^{{\iota}^*}&
k\otimes_{\bf Z}{\bf Z}[\mathcal D]
&&
k[T]^W
\ar[ll]_{\hskip 4mm({\rm id}\otimes \vartheta)\circ\eta}
&
 k[G]^G
 \ar[l]_<(0.24){{\rm res}}
 &
 k[\GG]\ar[l]_{\pi_G^*}.
}
\end{equation*}\vskip 2mm\noindent
They, in turn, induce the following isomorphisms of varieties\vskip -1mm
$$\GG\xrightarrow{\simeq} T/W\xrightarrow{\simeq}{\bf A}^r/Z.$$\vskip 2mm

By means of a special parametrization of $\tT$ one
can obtain
an explicit description of the elements of $\tC$
well adapted for computing $k[{\bf A}^r]^Z$. Since
$\tG=\tG_1\times\cdots\times \tG_s$ and $\widehat
C=\widehat C_1\times\cdots\times \widehat C_s$ where
every $\widehat G_i$ is a nontrivial  normal simply
connected simple subgroup of $\tG$ and $\widehat C_i$ is
the center of $\widehat G_i$, it suffices to describe
this parametrization for simple groups $\tG$. The answer
is given below in Lemma \ref{explcenter}.

Namely,
let
${\alpha}^\vee_i\colon {\bf G}_m\to \widehat T$
be the coroot corresponding to ${\alpha}_i$.
Then, for every $s\in {\bf G}_m$, we have
\begin{equation}\label{pairr}
\big({\alpha}^\vee_i(s)\big){}^{\varpi_j}=
\begin{cases} s &\mbox{if $i=j$},\\
1 &\mbox{if $i\neq j$.}
\end{cases}
\end{equation}

If $\langle\ {,}\ \rangle$ is the natural pairing
between the lattices of characters and cocharacters of
$\widehat T$, we put
$$n_{ij}:=\langle{\alpha}_i,{\alpha}_j^\vee
\rangle.$$
So
$(n_{ij})_{i,j=1}^r$ is the Cartan matrix
of $\tG$.

 By \cite[Lemma 28(b),(d) and its Cor.\,(a)]{St2} the map\vskip -1mm
\begin{equation}\label{toruss}
\nu \colon {\bf G}_m^{r} \longrightarrow \widehat T,
\qquad \nu(s_1,\ldots, s_r)=
{\alpha}^\vee_1(s_1)\cdots
{\alpha}^\vee_r(s_r),
\end{equation}\vskip 2mm
\noindent
is an isomorphism of groups and\vskip -1mm
\begin{equation}\label{center}
\widehat C=\{{\alpha}^\vee_1(s_1)\cdots
{\alpha}^\vee_r(s_r)\mid s_1^{n_{i1}}\cdots
s_r^{n_{ir}}=1\hskip 2mm\mbox{for every $i=1,\ldots,
r$}\}.
\end{equation}\vskip 2mm

By \eqref{pairr} and \eqref{toruss}  we have\vskip -1mm
\begin{equation*}
\big(\nu(s_1,\ldots,s_r)\big)^{\varpi_i}=
\big({\alpha}^\vee_1 (s_1)\big){}^{\varpi_i}
\cdots
\big({\alpha}^\vee_r(s_r)\big){}^{\varpi_i}=s_i.
\end{equation*}\vskip 2mm\noindent
This and \eqref{action} imply that, for every
$s=(s_1,\ldots,s_r)\in {\bf G}_m^r$ and $(a_1, \ldots,
a_r)\in{\bf A}^r$, the following equality holds:
\begin{equation}\label{actiontorus}
\nu(s)\cdot(a_1,\ldots, a_r)=(s_1a_1,\ldots, s_ra_r).
\end{equation}

\begin{lemma}\label{explcenter}
For every simple simply connected group $\tG$, the
subgroup $\nu^{-1}(\widehat C)$ of
the torus $\;{\bf G}_m^r$ is
described in the following Table $1$ {\rm(}simple roots
in \eqref{toruss} are
numbered as in {\rm\cite{Bou}}{\rm):}

\vskip 3mm

\centerline{\sc Table $1$.}

\vskip 3mm
\begin{center}
\begin{tabular}{c|cc}
\text{\rm type of} $\tG$ && $\nu^{-1}(\widehat C)$
\\[2pt]
\hline \hline
&\\[-9pt]
${\sf A}_r$&& $\{(t, t^2, t^3,\ldots, t^r)\mid
t^{r+1}=1\}$
\\[3pt]
 \hline
&\\[-8pt]
 ${\sf B}_r$ 
 &&
 $\{(1,\ldots, 1, t)\mid t^2=1\}$
\\[3pt]
\hline
&\\[-8pt]
${\sf C}_r$
&& $\{(t, 1, t, 1,\ldots, t^{r\,{\rm mod}\,2})\mid
t^2=1\}$
\\[3pt]
\hline
&\\[-8pt]
${\sf D}_r,$ $\mbox{$r${\fontsize{9pt}{5mm}
\selectfont\rm odd}}$ && $\{(t^2, 1, t^2, 1,\ldots, t^2,
t, t^{-1})\mid t^4=1\}$
\\[3pt]
\hline
&\\[-8pt]
${\sf D}_r,$ $\mbox{$r${\fontsize{9pt}{5mm}
\selectfont\rm even}}$ && $\{(t_1, 1, t_1, 1,\ldots,t_1, 1,
t_1t_2, t_2)\mid t_1^2=t_2^2=1\}$
\\[3pt]
\hline
&\\[-8pt]
${\sf E}_6$&& $\{(t, 1, t^{-1}, 1, t, t^{-1})\mid
t^3=1\}$
\\[3pt]
\hline
&\\[-8pt]
${\sf E}_7$&& $\{(1, t, 1, 1, t, 1, t)\mid t^2=1\}$
\\[3pt]
\hline
&\\[-8pt]
${\sf E}_8$&& $\{(1,1,1,1,1,1,1,1)\}$
\\[3pt]
\hline
&\\[-8pt]
${\sf F}_4$&& $\{(1,1,1,1)\}$
\\[3pt]
\hline
&\\[-8pt]
${\sf G}_2$&& $\{(1,1)\}$
\end{tabular}
\end{center}
 \end{lemma}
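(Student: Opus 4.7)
The proof is a direct case-by-case computation based on equation \eqref{center}, which rewrites $\nu^{-1}(\widehat C)$ as an explicit subgroup of $\mathbf{G}_m^r$ cut out by a system of multiplicative equations determined by the Cartan matrix. More precisely, \eqref{center} gives
\[
\nu^{-1}(\widehat C)=\bigl\{(s_1,\ldots,s_r)\in\mathbf{G}_m^r \ \bigl|\ s_1^{n_{i1}}\cdots s_r^{n_{ir}}=1 \text{ for every } i=1,\ldots,r\bigr\}.
\]
So the plan is to pick, for each simple type, the Cartan matrix $(n_{ij})$ with Bourbaki's numbering of the simple roots, and then solve this system of $r$ multiplicative equations in $r$ unknowns directly.

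For the classical types $\mathsf{A}_r,\mathsf{B}_r,\mathsf{C}_r,\mathsf{D}_r$ the Cartan matrix is tridiagonal (with one or two off-diagonal corrections in $\mathsf{B},\mathsf{C},\mathsf{D}$), so the system has a natural recursive structure. I would start from the equation at one end of the Dynkin diagram, which expresses $s_2$ as a power of $s_1$, and then propagate: in type $\mathsf{A}_r$ one inductively obtains $s_i=s_1^i$, so that the equation at the far end becomes $s_1^{r+1}=1$, giving the first row of Table~1. In type $\mathsf{B}_r$ the short-root end forces $s_{r-1}=s_r^2$, while the $(r-1)$-st equation gives $s_r^2=s_1^r$; combining with $s_{r-1}=s_1^{r-1}$ yields $s_1=1$, hence $s_1=\cdots=s_{r-1}=1$ and $s_r^2=1$. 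Types $\mathsf{C}_r$ and $\mathsf{D}_r$ are handled analogously: one propagates from one end, and the doubly-laced end (resp.\ the forked end) imposes a parity condition that produces the alternating pattern ($\mathsf{C}_r$) or the $\mathbf Z/2\times\mathbf Z/2$ pattern in even $\mathsf{D}_r$ (respectively the $\mathbf Z/4$ pattern in odd $\mathsf{D}_r$).

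For the exceptional types $\mathsf{E}_6,\mathsf{E}_7,\mathsf{E}_8,\mathsf{F}_4,\mathsf{G}_2$ the calculation is not recursive, but it is completely explicit. Writing out the five or six linear equations modulo unknowns, one eliminates unknowns one by one; for $\mathsf{E}_8,\mathsf{F}_4,\mathsf{G}_2$ the Cartan matrix has determinant $\pm 1$, so the only solution is $s_1=\cdots=s_r=1$, which matches $|\widehat C|=1$ for these types. For $\mathsf{E}_6$ one finds $s_1=t$, $s_3=t^{-1}$, $s_5=t$, $s_6=t^{-1}$, $s_2=s_4=1$ and $t^3=1$, and for $\mathsf{E}_7$ an analogous chase gives $s_2=s_5=s_7=t$, the rest equal to $1$, and $t^2=1$.

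The main (and essentially only) obstacle is bookkeeping: one must use Bourbaki's ordering of the simple roots consistently with formula \eqref{toruss} throughout, and, for the non-simply-laced and exceptional types, organize the elimination so that no relation is lost. No conceptual input beyond \eqref{center} is needed, since $|\nu^{-1}(\widehat C)|=|\widehat C|$ equals the determinant of the Cartan matrix and thus provides an easy check that no solutions have been missed in each case.
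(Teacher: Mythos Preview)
Your proposal is correct and starts from exactly the same point as the paper: equation \eqref{center}, which expresses $\nu^{-1}(\widehat C)$ as the solution set of the multiplicative system determined by the Cartan matrix. The only organizational difference is that the paper, rather than solving the system ab initio as you do, first \emph{verifies} that the entries of Table~1 satisfy the system (this is immediate from the explicit Cartan matrices) and then invokes the Smith normal form of the Cartan matrix to check that the cardinalities match, forcing equality; your direct-elimination approach is equally valid and arguably more self-contained, while the paper's verify-and-count approach is a bit quicker to execute once the answer is in hand.
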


\vskip 1mm

\begin{proof} By \eqref{center} an element $(s_1,\ldots, s_r)\!\in\! {\bf G}_m^r$ lies in $\nu^{-1}(\widehat C)$ if and only if $(s_1,\ldots,s_r)$
is a solution of the following system of equations:
\begin{equation}\label{systemm}
\left.
\begin{split}
&x_1^{n_{11}}\cdots x_r^{n_{1r}}=1,\\[-2pt]
&\hskip 1mm .\ . \ . \ .\ .\ .\ .\ .\ .\ .\ . \\[1pt]
&x_1^{n_{r1}}\cdots x_r^{n_{rr}}=1.
\end{split}\right\}
\end{equation}

Let, for instance, $\tG$ be of type ${\sf D}_r$ for even
$r$. Using the explicit form of the Cartan matrix
\cite[Planche IV]{Bou}, one immediately verifies that
every element of $C':=\{(t_1, 1, t_1, 1,\ldots,t_1, 1, t_1t_2,
t)\mid t_1^2=t_2^2=1\}$ is a solution of \eqref{systemm}.
Hence, $C'\subseteq \nu^{-1}(\widehat C)$. On the other
hand, the fundamental group  of the root system of type
${\sf D}_r$ is isomorphic to ${\bf Z}/2{\bf Z}\oplus
{\bf Z}/2{\bf Z}$ (see \cite[8.1.11]{Sp} and
\cite[Planche IV]{Bou}). Hence, the {\sc Smith} normal
form of $(n_{ij})_{i,j=1}^r$ is ${\rm
diag}\,(1,\ldots,1,2,2)$. Therefore, there is a basis
$\beta_1,\ldots, \beta_r$ of the coroot lattice of $\tT$
such that, for $(s_1,\ldots,s_r)\in {\bf G}_m^r$, we
have $\beta_1(s_1)\cdots\beta_r(s_r)\in \widehat C$ if
and only if $(s_1,\ldots,s_r)$ is a solution of the
following system of equations:
\begin{equation*}
x_1=1,\ldots, x_{r-2}=1, x_{r-1}^2=1, x_{r}^2=1.
\end{equation*}
This yields the equality $|C'|=|\widehat C|$; whence $C'=\nu^{-1}
(\widehat C)$.

For the groups of the other types the proofs are
similar. \quad $\square$
\renewcommand{\qed}{}\end{proof}

The following examples illustrate how
this can be applied to  exploring  singularities of
$\GG$ and finding the minimal generating sets
$\{{\rm ch}_{\varpi}\mid \varpi\in \mathcal H\}$ and
$\{[E(\varpi)] \mid \varpi\in \mathcal H\}$ of,
respectively ,  the algebra of class functions on $G$ and
 the representation ring
of $G$.

\begin{examples}\label{singu}
\

(1) If  $Z$ is trivial, i.e., $G=\tG$, then we have
$$\mathcal H=\{\varpi_1,\ldots, \varpi_r\}$$
\noindent and $\GG$ is isomorphic to $\mathbf A^r$.

\vskip 1.5mm

(2) Let $\tG$ be of type ${\sf A}_r$.  If ${\rm char}\,k>0$, let
$({\rm char}\,k)^d$ be the maximal power of
${\rm char}\,k$
dividing $r+1$.
Put
$$
m:=
\begin{cases} r+1& \mbox{if 
${\rm char}\,k=0$,}\\
(r+1)/({\rm char}\,k)^d & \mbox{if
${\rm char}\,k>0$.}
\end{cases}
$$\vskip 2mm\noindent
Then there are precisely $m$
different $m$th roots of unity in $k$
and from Table 1 we deduce that ${\widehat C}$ is a cyclic group of order $m$. Assume that ${\widehat C}$ is nontrivial, i.e., $m\geqslant 2$, and consider the case $Z={\widehat C}$, i.e.,
\begin{equation*}\label{pgl}
G=\tG/{\widehat
C}={\bf PGL}_{r+1}.
\end{equation*}

Take an element $z\in{\widehat C}$, $g\neq e$. As
$z=\nu((t, t^2, t^3,\ldots, t^r))$ where $t\neq 1$,
$t^{r+1}=1$,  formula
\eqref{actiontorus} implies that $z$ acts on ${\bf A}^r$ as a pseudo-reflection (i.e., $\dim_k({\bf A}^r)^{z}=r-1$) if and only if $r=1$. As is known \cite{Se3} (cf.\;also \cite[Theorem 7.2.1]{Ben}) if a quotient variety of ${\bf A}^r$ by a finite linear group is smooth, then this group is generated by pseudo-reflections
(and this quotient variety is isomorphic to ${\bf A}^r$).
Hence,  if  $r\geqslant 2$, then
$\GG$ has singular points (this agrees with Theorem  \ref{sing}); if $r=1$, then $\GG$ is smooth, see the next example.

Actually, 
our analysis provides a more precise information. Namely,
let $\mmu_m$
be the cyclic group ${\bf Z}/m{\bf Z}$. Fix a choice of generator
$g$ of $\mmu_m$ and primitive $m$th root of unity $\zeta$ in $k$.
Let $L$ be a one-dimensional $\mmu_m$-module on which $\mmu_m$ acts by means of the character $g^h\mapsto \zeta^h$. Put \begin{equation*}
V=\bigoplus_{i=1}^{r}L^{\otimes i}
\end{equation*}
(thus, if ${\rm char}\,k\nmid (r+1)$, then
the representation of
$\mmu_{r+1}$ in $V$
is the reduced regular representation of  $\mmu_{r+1}$,
i.e., the quotient of
regular representation
 by the
 unique one-dimensional trivial subrepresentation). 
Then
$\GG$ is isomorphic to
$V/\mmu_m$. Let ${\mathcal I}_{r, m}$ be the set of all
 indecomposable elements
of the additive monoid
\begin{equation*}\label{monoid}
\biggl\{(a_1,\ldots, a_r)\in {\bf N}^r\;\Big|\;\sum _{i=1}^{r}i a_i\equiv 0\;{\rm mod}\,m\biggl\}.
\end{equation*}
Then $k[V/\mmu_{m}$] is isomorphic to the subalgebra of $k[y_1,\ldots, y_r]$ generated by all the monomials
$y_1^{a_1}\cdots y_r^{a_r}$ with $(a_1,\ldots, a_r)\in {\mathcal I}_{r, m}$, and the following equality holds:
\begin{equation*}
\mathcal H=\biggl\{\sum_{i=1}^{r}a_i\varpi_i\;\Big|\:(a_1,\ldots,a_r)\in {\mathcal I}_{r, m}\biggr\}.
\end{equation*}

For instance, let $r=2$. If ${\rm char}\, k=3$, then $\widehat C=\{e\}$, and
if ${\rm char}\, k\neq 3$,  then the order of   $\widehat C$ is $3$.
In the latter case
${\mathcal H}=\{3\varpi_1, \varpi_1+\varpi_2, 3\varpi_2\}$ for
$G={\bf PGL}_3$, and $\GG$ is isomorphic to
the surface
$\{(c_1,c_2,c_3)\in {\bf A}^3\mid c_1c_2=c_3^3\}$.

To illustrate the dependence of $\mathcal H$ on ${\rm char}\, k$,  consider the case $r=5$.  Then $\widehat C\neq\{e\}$ and, for
$G={\bf PGL}_6$, the following holds:
If ${\rm char}\, k\neq 2$, $3$, then $(a,0,0,0,0)\in \mathcal H$ only for $a=6$, but if ${\rm char}\, k=2$ or $3$, then $(6/({\rm char}\, k),0,0,0,0,)\in \mathcal H$. 



Note that $|{\mathcal H}|$ ($=\max_{x\in\GG}\dim\,{\rm T}_{x, \GG}$,
see Corollary \ref{TfGG}) grows very rapidly when $r\to\infty$. Indeed,
a simple observation from \cite[p.\;105]{Kac}
shows that $|{\mathcal I}_{r, r+1}|\geqslant p(r+1)+\varphi(r+1)-1$ where $p$ and $\varphi$ are, respectively, the classical partition function and the Euler function, and, as is known,
$p(s)\sim 
\big({\rm exp}\,(\pi\sqrt{2s/3})\big)/4s\sqrt{3}$ when $s\to+\infty$, see \cite{HR}.

\vskip 1.5mm

(3) Let $\tG$ be of type ${\sf B}_r$  (where ${\sf
B}_1\!:=\!{\sf A}_1$).
Table 1
imp\-lies that $\nu^{-1}(\tC)$ is generated by
$(1,\ldots,1,-1)$. Hence
${\widehat
C}\neq \{e\}$
(and then $|{\widehat
C}|=2$)
if and only if ${\rm char}\,k\!\neq\!2$.
Assume that this inequality holds.
As $1\neq -1$, we then have $k[{\bf A}\!^r]^{\widehat
C}\!\!=\!k[y_1,\ldots, y_{r-1}, y_r^2]$. Therefore,
for $G=\tG/{\widehat
C}={\bf SO}_{2r+1}$, we have
\vskip 1mm
$$\mathcal H=\{\varpi_1,\ldots,\varpi_{r-1}, 2\varpi_{r}
\}
$$
\vskip 2mm
\noindent and
$\GG$ is isomorphic to ${\bf A}\!^r$ (the latter
agrees with Theorem  \ref{sing}).

\vskip 1.5mm

(4) Let $\tG$ be of type ${\sf D}_r$, $r\geqslant 3$, and
let $Z:=\{t\in \widehat
C\mid t^{\varpi_1}=1\}$. Table 1 implies that $\nu^{-1}(Z)$ is generated by
$(1,\ldots, 1, -1, -1)$. Hence $Z\neq \{e\}$
if and only if ${\rm char}\,k\neq 2$. Assume that this inequality holds. As $1\neq -1$, we then have
$k[{\bf A}\!^r]^Z=k[y_1,\ldots,y_{r-2},
y_{r-1}^2, y_{r}^2, y_{r-1}y_{r}]$.
Therefore, for
$G:=\tG/Z={\bf SO}_{2r}$, we have
\vskip .3mm
$$\mathcal H=\{\varpi_1,\ldots,\varpi_{r-2}, 2\varpi_{r-1}, 2\varpi_{r}, \varpi_{r-1}+ \varpi_{r}
\}
$$
\vskip 2mm
\noindent and
$\GG$ is isomorphic
to ${\bf A}^{r-2}\!\times X$ where $X$
is a nondegenerate quadratic cone in\;${\bf A}^3$.

\vskip 1.5mm

(5) Let $\tG$ be of type ${\sf D}_r$ with even
$r=2d\!\geqslant\! 4$
and let $Z\!:=\!\{t\!\in\! \widehat C\mid
t^{\varpi_r}\!=\!1\}$.
Table 1
implies that $\nu^{-1}(Z)$ is generated by $(-1, 1, -1,
1, \ldots, -1, 1)$. Hence $Z\neq \{e\}$ if and only if ${\rm char}\,k\neq 2$. Assume that this inequality holds. As $1\neq -1$,
the algebra
$k[{\bf A}\!^r]^Z$ is then minimally
generated by all $y_i$'s with even $i$ and all the
monomials of degree $2$ in $y_j$'s with odd $j$.
Therefore, for
$G:=\tG/Z={\bf Spin}_{2r}^{1/2}$
(the
half-spinor group),
we have
\vskip 1mm
$$
\mathcal H=\{\varpi_i\mid i\hskip 2mm\mbox{is
even}\}\cup\{\varpi_l+\varpi_m\mid l, m \hskip
2mm\mbox{are odd}\}
$$
\vskip 2mm\noindent
and
$\GG$ is isomorphic to ${\bf A}^d\times Y$
where $Y$ is the affine cone over the Veronese variety
$\nu_2({\bf P}^{d-1})$ in ${\bf P}^{(d-1)(d+2)/2}$.
Note that if $r=4$, then, up to the renumbering of simple roots, we obtain the same answer as in the previous example.

\vskip 1.5mm

(6) Let $\tG$ be of type ${\sf E}_7$.
Table 1 implies that  $\nu^{-1}(\tC)$ is generated by
the element
$(1,-1,1,1,-1,1,-1)$. Hence
${\widehat
C}\neq \{e\}$
(and then $|{\widehat
C}|=2$)
if and only if ${\rm char}\,k\!\neq\!2$.
Assume that this inequality holds.
Then, as $1\neq -1$, the algebra
$k[{\bf
A}\!^7]^{\widehat C}$ is minimally generated by $y_1,
y_3, y_4, y_6$ and all the monomials of degree $2$ in $y_2,
y_5, y_7$. Therefore, for
$G=\tG/{\widehat
C}$, we have\vskip 1mm
$$
\mathcal H=\{\varpi_1, \varpi_3, \varpi_4, \varpi_6,
2\varpi_{2}, 2\varpi_{5}, 2\varpi_{7},
\varpi_{2}+\varpi_{5}, \varpi_{2}+\varpi_{7},
\varpi_{5}+\varpi_{7}\}
$$
\vskip 2mm
\noindent
and
$\GG$ is
isomorphic to ${\bf A}^4\times Y$ where $Y$ is the
affine cone over the Veronese variety $\nu_2({\bf
P}^2)$ in ${\bf P}^5$ (in particular, the maximum of dimensions of tangent spaces
of the $7$-dimensional variety $\GG$ is $10$.)
\qquad $\square$

\end{examples}

\begin{remark}\label{smo} {\rm Considering in the same way the remaining types of simple groups  one obtains the proof of
Theorem \ref{sing}.}
\end{remark}

\section{Two further questions of Grothendieck}\label{Yettwo}

Theorem \ref{generators} describes a minimal generating
set of the algebra $k[G]^G$ of class functions on $G$.
Constructing generating sets of $k[G]^G$ is the topic
of two further questions of {\sc Grothendieck} in
\cite[p.\;241]{GS}:
\begin{quote} ``$[\ldots]$ {\it When $G$ is an adjoint group, is it
possible to generate the affine ring of $I(G)$ with
coefficients of the Killing polynomial{\rm?} In the
general case, is it enough to take the coefficients of
analogous polynomials for certain linear representations
{\rm(}perhaps arbitrary faithful representations})?
$[\ldots]$''
\end{quote}
Below we answer these questions.

Let $\varrho\colon G\!\to\! {\bf GL}(V)$ be a finite
dimensional linear representation of $G$. Define the~set\vskip 1mm
\begin{equation*}
C_{\varrho}:=\{c_{\varrho, i}\in k[G]\mid i=1,\ldots,
\dim V\}
\end{equation*}
\vskip 2mm\noindent
by the equality
\begin{equation}\label{charpoly}
{\rm det}(xI-\varrho(g))=\sum_{i=0}^{\dim V}
c_{\varrho, i}(g)x^{\dim V-i}\quad\mbox{for every}\hskip
2mm g\in G,
\end{equation}
where $x$ is a variable.
 If
$V\!=\!E(\varpi)$ (here and below we use the notation
of Section \ref{singugen}) and $\varrho$ determines the
$G$-module structure of $E(\varpi)$, we put
$C_{\varpi}:=C_{\varrho}$.

Clearly, $c_{\varrho, i}\!\in\! k[G]^G$ and $c_{\varrho,
1}$ is the character of $\varrho$. Hence by Theorem
\ref{generators}(iii)
\begin{equation*}\label{Gro2}
\bigcup_{\varpi\in \mathcal H} C_{\varpi}
\end{equation*}
is a generating set of the algebra $k[G]^G$. This
answers  the second {\sc Gro\-then\-dieck}'s question in
the affirmative.

In order to answer the first one in the negative it is
sufficient to find an adjoint $G$ and two elements $z_1,
z_2\in T$ such that
\begin{enumerate}
\item[(i)] $z_1$ and $z_2$ are not in the same
$W$-orbit; \item[(ii)] the spectra
of the linear operators ${\rm Ad}_{G}\,z_1$ and ${\rm
Ad}_{G}\,z_2$ on the vector space ${\rm Lie}\,G$
coincide.
\end{enumerate}
Indeed, property (i) implies that there is a function
$f\in k[T]^W$ such that $f(z_1)\neq f(z_2)$. Given
isomorphism \eqref{G//G}, this means that there is a
function ${\widetilde f}\in k[G]^G$ such that
${\widetilde f}(z_1)\neq {\widetilde f}(z_2)$. On the
other hand, \eqref{charpoly} and property (ii) imply
that\vskip 1mm
\begin{equation*}
c_{{\rm Ad}_G,i}(z_1)=c_{{\rm Ad}_G,i}(z_2)\qquad
\mbox{for every}\hskip 2mm i.
\end{equation*}\vskip 2mm\noindent
Therefore, $\widetilde f$ is not in the subalgebra of
$k[G]^G$ generated by $C_{\rm Ad_G}$, i.e., the latter
is not a generating set of $k[G]^G$.

The following two examples show that one indeed can find
$G$, $z_1$, and $z_2$ sharing properties (i) and (ii).
\begin{examples}\

(1) Let $G=H\times H$ where $H$ is a connected adjoint
semisimple algebraic group. Let $T=S\times S$ where $S$
is a maximal torus of $H$. Let $W_S$ be the Weyl group
of $H$ naturally acting
on $S$. Take any two elements
$a, b\in S$ that are not in the same $W_S$-orbit and put
$z_1:=(a,b),\; z_2:=(b, a)\in T$. As $W=W_S\times W_S$,
property (i) holds. On the other hand,
clearly, for every $i=1, 2$, the spectrum of ${\rm
Ad}_G\,z_i$ is the union of the spectra of ${\rm
Ad}_H\,a$ and ${\rm Ad}_H\,b$; whence property (ii)
holds.

\vskip 1mm

(2) In this example $G$ is simple, namely, $G={\bf
PGL}_3$. Let $\alpha_1, \alpha_2\in {\rm X}(T)$ be the
simple roots of $T$ regarding  $B$. As the map
$T\to {\bf G}_{\rm m}^2$, $t\mapsto (t^{\alpha_1},
t^{\alpha_2})$, is surjective (in fact, an isomorphism),
for every $u, v\in k$, $uv\neq 0$, there are $z_1,
z_2\in T$ such that $z_1^{\alpha_1}=u$,
$z_1^{\alpha_2}=v$ and $z_2^{\alpha_1}=v$,
$z_2^{\alpha_2}=u$. For  these $z_1, z_2$, property (ii)
holds as the set of roots of $G$ regarding  $T$ is
$\{\pm\alpha_1, \pm\alpha_2, \pm(\alpha_1+\alpha_2)\}$.
Now take $u$ and $v$ such that
all the elements
$u$, $u^{-1}$, $v$, $v^{-1}$, $uv$, $u^{-1}v^{-1}$ are
pairwise
distinct. Then property (i) holds as there are no $w\in
W$ such that $w(\alpha_1)=\alpha_2$ and
$w(\alpha_2)=\alpha_1$.
\end{examples}

\section{Rational cross-sections }\label{si}\label{rcs}

Recall from \cite[2.14, 2.15]{St1} that an element $x\in
G$ is called {\it strongly regular} if its centralizer
$G_x$ is a maximal torus. Such $x$ is regular and
semisimple. Strongly regular
elements form a dense open subset $G_0$ of $G$
stable with respect to the conjugating action of $G$.
Every $G$-orbit in $G_0$ is regular and closed in $G$.
We put\vskip 1mm
\begin{equation*}\label{G0T0}
(\GG)_0:=\pG(G_0)\qquad\mbox{and}\qquad  T_0:=T\cap G_0.
\end{equation*}
\vskip 2mm\noindent
Abusing the notation, we
denote
$\pG|_{G_0}$ still by $\pG$:
\begin{equation}\label{pG0000}
\pG\colon G_0\longrightarrow (\GG)_0.
\end{equation}
\begin{lemma}\label{GG0}
\

\begin{enumerate}
\item[\rm(i)] $(\GG)_0$ is an open
smooth subset of $\GG$. \item[\rm(ii)] $\pG|_{T_0}\colon
T_0\to (\GG)_0$ is a surjective \'etale map.
\item[\rm(iii)] $((\GG)_0, \pG)$ is the geometric
quotient for the action of $\;G$ on $G_0$.
\end{enumerate}
\end{lemma}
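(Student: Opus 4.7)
The plan is to deduce all three statements from the identification $\GG\cong T/W$ furnished by Corollary \ref{tv} (i.e.\ Theorem \ref{toric}(i)) combined with a careful analysis of which elements of $T$ are strongly regular. I first note the following characterization. For $t\in T$, the centralizer $G_t$ contains $T$, its identity component $(G_t)^\circ$ is generated by $T$ together with the root subgroups $U_\alpha$ with $t^\alpha=1$, and the component group $G_t/(G_t)^\circ$ embeds into $N_G(T)/T=W$ as the Weyl stabilizer $W_t$. Consequently $G_t=T$ precisely when $t$ is regular in $T$ (no root vanishes on it) and $W_t$ is trivial; hence $T_0$ is exactly the locus of regular elements of $T$ with trivial $W$-stabilizer, and $W$ acts freely on $T_0$.

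For (ii): the isomorphism $\GG\cong T/W$ is induced by the restriction map $\mathrm{res}\colon k[G]^G\to k[T]^W$, so under this isomorphism $\pG|_T$ corresponds to the finite quotient morphism $\pi_{W,T}^{\ }\colon T\to T/W$ of \eqref{TW}. Every strongly regular $x\in G$ is semisimple, hence conjugate to some element of $T$, and since centralizers of conjugate elements are conjugate, such a conjugate actually lies in $T_0$. Therefore $T_0=T\cap\fG((\GG)_0)$ and $\pG|_{T_0}$ is surjective onto $(\GG)_0$. As $W$ acts freely on $T_0$ by the first paragraph, $T_0\to T_0/W=(\GG)_0$ is a principal $W$-bundle and in particular an étale map.

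For (i): every $G$-orbit in $G_0$ is a closed subset of $G$ (orbits of semisimple elements in a reductive group are closed), so each fiber of $\pG$ meeting $G_0$ (being the closure of a regular conjugacy class) is already a single closed orbit contained in $G_0$. Thus $G_0=\fG((\GG)_0)$ is $\pG$-saturated. Because $\pG$ is a good quotient by the reductive group $G$, the image $\pG(G\setminus G_0)$ of the closed $G$-invariant complement is closed in $\GG$, so $(\GG)_0$ is open. Smoothness of $(\GG)_0$ now follows at once from (ii): $T_0$ is smooth as an open subset of the torus $T$, and the étale surjection $\pG|_{T_0}\colon T_0\to(\GG)_0$ transfers smoothness.

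For (iii): the restriction $\pG\colon G_0\to(\GG)_0$ is surjective, $G$-invariant, and affine, with fibers equal to the (closed) $G$-orbits in $G_0$ by the previous paragraph. The universal categorical quotient property of $\pG\colon G\to\GG$ localizes along the saturated open subset $G_0$, exhibiting $((\GG)_0,\pG)$ as a categorical quotient of $G_0$ by $G$; since all orbits are closed, it is automatically a geometric quotient. The only delicate point in the whole argument is the characterization of $T_0$ in the first paragraph: in the non-simply-connected case a regular toral element need not be strongly regular, and one must use that the component group of a regular semisimple centralizer injects into $W$ to identify $T_0$ with the free $W$-locus in $T$.
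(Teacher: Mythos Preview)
Your argument is correct and follows essentially the same path as the paper: both proofs hinge on the identification $\GG\cong T/W$, the freeness of the $W$-action on $T_0$ (giving \'etaleness of $T_0\to(\GG)_0$), and the deduction of smoothness from this \'etale cover. The differences are minor and stylistic: for openness of $(\GG)_0$ the paper simply observes that $\pG$ is an open map (normal target, equidimensional irreducible fibers, \cite[AG.18.4]{Bor}), whereas you argue via saturation of $G_0$ and closedness of the complement's image; for (iii) the paper invokes \cite[6.6]{Bor} directly (separability of $\pG|_{G_0}$ inherited from (ii), surjectivity, orbit fibers, normal target), while you appeal to localization of the good quotient and closedness of orbits. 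Your opening characterization of $T_0$ as the free $W$-locus inside the regular locus of $T$ is more than is needed---the paper only uses the trivial direction ($t\in T_0\Rightarrow W_t=\{e\}$)---but it is correct and does no harm.
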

\begin{proof} Since $\GG$ is normal and all
fibers of $\pG$ are of constant dimension and
irreducible,
$\pG$ is an open map (see \cite[AG.18.4]{Bor}). Hence
$(\GG)_0$ is open in $\GG$.

As every element of $G_0$ is semisimple, it is conjugate
to an element of $T_0$; whence $\pG|_{T_0}$ is
surjective.

The set
$T_0$
is open in $T$ and $W$-stable. For every point $t\in
T_0$, we have $G_t=T$, hence the $W$-stabilizer of $t$
is trivial. Thus, the action of $W$ on $T_0$ is set
theoretically free.
Since $T$ is
smooth, $\GG$ is normal, and $(\GG, \pG|_T)$ is the
quotient for the action of $W$ on $T$ (see
\cite[6.4]{St1}),
we deduce from \cite[Exp.\;I, Th\'eor\`eme 9.5(ii)]{G3}
and \cite[V.2.3, Cor.\,4]{Bour} that $\pG|_{T_0}$ is
\'etale and hence
$(\GG)_0$ is
smooth.
This proves (i) and (ii).

By (ii) the map $\pG\colon G_0\to (\GG)_0$ is separable
and surjective. As its fibers are $G$-orbits and
$(\GG)_0$ is normal, (iii) follows from \cite[6.6]{Bor}.
\quad $\square$
\renewcommand{\qed}{}\end{proof}

The group $W$ acts on $G/T\times T_0$  diagonally with
the action  on the first factor defined by formula
\eqref{G/T}. The group $G$ acts on $G/T\times T_0$  via
left translations of the first factor. These two actions
commute with each other.

Consider the $G$-equivariant morphism
\begin{equation}\label{psss}
\psi\colon G/T\times T_0\longrightarrow G_0,\qquad (gT,
t)\mapsto gtg^{-1}.
\end{equation}

The proofs of Lemma \ref{dpsi} and Corollary
\ref{vartheta} reproduce that from my letter \cite{P2}.

\begin{lemma}\label{dpsi}
$\psi$ is a surjective \'etale map.
\end{lemma}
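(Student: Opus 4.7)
My plan is to verify surjectivity and étaleness separately.

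Surjectivity is immediate from the definition of $T_0$: given $x\in G_0$, strong regularity says $G_x$ is a maximal torus, so there is $g\in G$ with $g^{-1}G_x g=T$, and then $t:=g^{-1}xg$ satisfies $G_t=T$, i.e., $t\in T_0$, and $\psi(gT,t)=x$.

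For étaleness I would first note that $\dim(G/T\times T_0)=\dim G=\dim G_0$ and that both source and target are smooth, so it suffices to show that $d\psi$ is an isomorphism at every point. The map $\psi$ is $G$-equivariant for left translation on the first factor and conjugation on $G_0$, which reduces the problem to points of the form $(eT,t)$ with $t\in T_0$. Identify $T_{eT}(G/T)=\g/\t$ and $T_tT_0=\t$. Differentiating the family $s\mapsto\psi(\exp(sX)T,\,t\exp(sY))=\exp(sX)\,t\,\exp(sY)\,\exp(-sX)$ at $s=0$ and left translating by $t^{-1}$ into $T_eG=\g$ yields
\[
dL_{t^{-1}}\circ d\psi_{(eT,t)}(X+\t,\,Y)\;=\;(\operatorname{Ad}(t^{-1})-\operatorname{id})X + Y.
\]
This expression is well-defined on $\g/\t\oplus\t$ because $\operatorname{Ad}(t^{-1})$ fixes $\t$ pointwise.

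The central step is then to show that the linear map $(X+\t, Y)\mapsto(\operatorname{Ad}(t^{-1})-\operatorname{id})X + Y$ is an isomorphism $\g/\t\oplus\t\to\g$. Use the root space decomposition $\g=\t\oplus\bigoplus_{\alpha\in\Phi}\g_\alpha$, on which $\operatorname{Ad}(t^{-1})$ acts as the scalar $t^{-\alpha}$ on $\g_\alpha$. For $t\in T_0$ one has $t^\alpha\neq 1$ for every $\alpha\in\Phi$: indeed, if $t^\alpha=1$ then the root subgroup $U_\alpha$ would centralize $t$, contradicting $G_t=T$. Hence $\operatorname{Ad}(t^{-1})-\operatorname{id}$ acts by the nonzero scalar $t^{-\alpha}-1$ on each $\g_\alpha$ and is an automorphism of $\bigoplus_\alpha\g_\alpha$, which projects isomorphically onto $\g/\t$. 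The displayed map is therefore the direct sum of this automorphism with the identity on $\t$, hence an isomorphism, and so $d\psi_{(eT,t)}$ is an isomorphism.

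The only place where any care is needed is the assertion $t^\alpha\neq 1$ for all $\alpha\in\Phi$ in arbitrary characteristic; this is exactly what strong regularity was designed to guarantee, and everything else is a direct linear-algebra computation in the root decomposition, independent of $\operatorname{char}k$.
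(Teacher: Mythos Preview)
Your argument follows the same strategy as the paper's: reduce by $G$-equivariance to a point $(eT,t)$ and show $d\psi$ is an isomorphism there using the root decomposition together with $t^\alpha\neq 1$ for strongly regular $t$. The only difference is in how the differential is computed. The paper works at the group level, pushing the explicit curves $x\mapsto(\theta_\alpha(x)T,s)$ through $\psi$ and obtaining $\psi(\theta_\alpha(x)T,s)=\theta_\alpha((1-s^\alpha)x)s$, which is visibly characteristic-free. You compute at the Lie-algebra level via one-parameter families $\exp(sX)$. Your displayed formula $dL_{t^{-1}}\circ d\psi_{(eT,t)}(X+\t,Y)=(\operatorname{Ad}(t^{-1})-\operatorname{id})X+Y$ is correct in any characteristic, but its derivation through $\exp$ is not available when ${\rm char}\,k>0$, since there is no exponential map $\g\to G$. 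Either replace that step by the standard algebraic computation of the differential of $(g,t')\mapsto gt'g^{-1}$ at $(e,t)$, or mimic the paper and use the root-subgroup parametrizations $\theta_\alpha$ as the curves realizing the tangent directions; with that adjustment your proof is complete.
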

\begin{proof}
As every $G$-orbit in $G_0$ intersects $T_0$,
surjectivity of $\psi$ follows from \eqref{psss}.

Take a point $z\in G/T\times T_0$. We shall prove that $d\psi_z$ is an isomorphism. As $G/T\times T_0$ and $G_0$ are
smooth, this is equivalent to proving that $\psi$ is
\'etale at $z$. Using that $\psi$ is $G$-equivariant, we
may assume that $z=(eT, s)$, $s\in T_0$.

Let $U_\alpha$ be the one-dimensional unipotent root
subgroup of $G$ corresponding to a root $\alpha$ with
respect to $T$ and let
$\theta_\alpha\colon {\bf G}_a\!\to\! U_\alpha$
be the isomorphism of groups such that\vskip 1mm
\begin{equation*}
t\theta_\alpha(x)t^{-1}=\theta_\alpha(t^{\alpha}x)
\qquad\mbox{ for all $\ t\in T, \ x\in {\bf G}_a$},
\end{equation*}\vskip 2mm\noindent see \cite[IV.13.18]{Bor}. Put
\begin{gather*}
C_\alpha:=\{(\theta_\alpha(x)T, s)\in G/T\times T_0\mid x\in {\bf G}_a\},\\
D:=\{(eT, t)\in G/T\times T_0\mid t\in T_0\}.
\end{gather*}

The linear span of all tangent spaces ${\rm T}_{z, C_{\alpha}}$
and
${\rm T}_{z, D}$ is ${\rm T}_{z, G/T\times T_{0}}$.
We have
\begin{equation}\label{alllg}
\begin{split}
\psi(\theta_\alpha(x)T,
s)&=\theta_\alpha(x)s\theta_\alpha(x)^{-1}=
\theta_\alpha(x)s\theta_\alpha(-x)\\
&=\theta_\alpha(x)\theta_\alpha(-s^\alpha x)s=
\theta_\alpha((1-s^\alpha)x)s.
\end{split}
\end{equation}
Since $s$ is
regular, $s^\alpha\neq 1$.
Hence \eqref{alllg} shows that $\psi$ maps the curve
$C_\alpha$ isomorphically onto the curve\vskip 1mm
\begin{equation*}
\psi(C_\alpha)=\{\theta_\alpha((1-s^\alpha)x)s\mid x\in
{\bf G}_\alpha\}.
\end{equation*}
\vskip 2mm

Clearly, $\psi(D)=T_0$ and
$\psi|_D\colon D\to T_0$
is an isomorphism.
But ${\rm T}_{e, G}$ is the linear span of ${\rm T}_{e,
T}$ and the tangent spaces to the curves
$\{\theta_\alpha(x)\mid x\in {\bf G}_\alpha\}$
at $e$.
Hence ${\rm T}_{s, G}$ is the linear span of ${\rm
T}_{s, T}$ and the tangent spaces  at $s$ to the right
translates of these curves by $s$.
This implies the claim of the lemma.
\quad $\square$
\renewcommand{\qed}{}\end{proof}
\begin{corollary} \label{psisep}
$\psi$ is separable.
\end{corollary}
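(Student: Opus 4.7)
The plan is to derive separability as an immediate consequence of Lemma \ref{dpsi}. Recall that a dominant morphism $\varphi\colon X\to Y$ between irreducible varieties is separable precisely when the induced field extension $k(Y)\hookrightarrow k(X)$ is separable; equivalently, when there exists a smooth point $x\in X$ whose image is a smooth point of $Y$ and at which the differential $d\varphi_x$ is surjective (see, e.g., \cite[AG.17.3]{Bor}).

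First I would observe that $G/T\times T_0$ and $G_0$ are smooth irreducible varieties of the same dimension $\dim G$, and that $\psi$ is dominant (indeed surjective) by Lemma \ref{dpsi}. Second, the same lemma furnishes, at every point $z\in G/T\times T_0$, an isomorphism $d\psi_z\colon {\rm T}_{z,\,G/T\times T_0}\xrightarrow{\simeq}{\rm T}_{\psi(z),\,G_0}$; in particular $d\psi_z$ is surjective. The separability criterion above then applies and yields separability of $\psi$.

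Alternatively, one can argue even more directly: an étale morphism is, by definition, smooth and unramified, hence in particular separable, so the conclusion follows at once from Lemma~\ref{dpsi}. I would expect no genuine obstacle here; the content of the corollary lies entirely in Lemma~\ref{dpsi}, and this statement is essentially a formal consequence, recorded separately only because separability (rather than the stronger étale property) is what will be needed in the sequel.
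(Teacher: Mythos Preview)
Your proposal is correct and matches the paper's approach: the paper records Corollary~\ref{psisep} without proof, treating it as an immediate consequence of Lemma~\ref{dpsi}, and you have simply spelled out why (\'etale implies separable, or equivalently the differential criterion of \cite[AG.17.3]{Bor}). There is nothing to add.
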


\begin{corollary}\label{vartheta}
$(G_0, \psi)$ is the quotient for the action of $\;W$ on
$G/T\times T_0$.
\end{corollary}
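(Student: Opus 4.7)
The plan is to reduce to three verifications: $W$-invariance of $\psi$, identification of fibers of $\psi$ with $W$-orbits, and an appeal to the standard criterion \cite[6.6]{Bor} that applies because $\psi$ is separable (Corollary \ref{psisep}) and $G_0$ is smooth (hence normal).

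First, I would check that $\psi$ is $W$-invariant: by \eqref{G/T} and the conjugation action of $W$ on $T$,
\[
\psi(w\cdot(gT,t))=\psi(g\dot{w}^{-1}T,\,\dot{w}t\dot{w}^{-1})=g\dot{w}^{-1}(\dot{w}t\dot{w}^{-1})\dot{w}g^{-1}=gtg^{-1}=\psi(gT,t).
\]
This is a direct calculation from the definitions of the two diagonal actions and \eqref{psss}.

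Second, and this is the main point, I would show the fibers of $\psi$ are exactly $W$-orbits. Suppose $\psi(gT,t)=\psi(g'T,t')$, i.e.\ $gtg^{-1}=g't'g'^{-1}$. Setting $n:=g'^{-1}g$, this reads $ntn^{-1}=t'$. Since $t\in T_0$ is strongly regular, $G_t=T$; the same holds for $t'$. Conjugation by $n$ therefore carries $G_t=T$ onto $G_{t'}=T$, so $n\in N_G(T)$. Let $w\in W=N_G(T)/T$ be the class of $n^{-1}$, and fix the representative $\dot{w}:=n^{-1}$. Then
\[
g'\dot{w}^{-1}T=g'nT=gT,\qquad \dot{w}t'\dot{w}^{-1}=n^{-1}t'n=t,
\]
which says precisely $w\cdot(g'T,t')=(gT,t)$. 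Combined with the $W$-invariance from the first step, this shows each fiber of $\psi$ equals a single $W$-orbit.

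Third, I would invoke Lemma \ref{dpsi} (surjective étale, hence separable by Corollary \ref{psisep}) together with the fact that the fibers of $\psi$ are the $W$-orbits and that $G_0$, as an open subset of the smooth variety $G$, is normal. Then \cite[6.6]{Bor} — the criterion used already in the proof of Lemma \ref{GG0}(iii) — yields at once that $(G_0,\psi)$ is the geometric quotient for the $W$-action on $G/T\times T_0$. The only mild obstacle is keeping track of the sign conventions for the $W$-action on $G/T$ in \eqref{G/T} versus the conjugation action on $T$, so that the element $w$ produced in the second step actually sends $(g'T,t')$ to $(gT,t)$ rather than to its inverse-image twin.
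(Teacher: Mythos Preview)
Your proof is correct and follows the same overall structure as the paper's: verify $W$-invariance of $\psi$, show fibers are single $W$-orbits, then invoke \cite[Prop.\;II.6.6]{Bor} using separability and normality of $G_0$. The one difference is in the second step: the paper cites \cite[6.1]{St1} to produce a Weyl-group element conjugating $t_2$ to $t_1$, then uses strong regularity to conclude the conjugator lies in $N_G(T)$; you instead argue directly that $n=g'^{-1}g$ carries $G_t=T$ onto $G_{t'}=T$ and hence normalizes $T$. Your route is a little more self-contained, but both arguments are short and rest on the same fact that strongly regular elements have centralizer exactly $T$.
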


\begin{proof}
By \cite[Prop.\,II.6.6]{Bor}, as $G_0$ is normal and
$\psi$ is surjective and separable, it suffices to prove
that the fibers of $\psi$ are $W$-orbits.


Using \eqref{G/T} and \eqref{psss} one immediately
verifies that the fibers of $\psi$ are $W$-stable. On
the other hand, let $\psi(g_1T, t_1)\!=\!\psi(g_2T,
t_2)$. By \eqref{psss} this equality is equivalent to
$(g_1^{-1}g_2)t_2(g_1^{-1}g_2)^{-1} \!\!=t_1$. By
\cite[6.1]{St1} the latter, in turn, implies that there
is an element $w\in W$ such that\vskip 1mm
\begin{equation*}\label{wg}
\overset{.}{w}t_2\overset{.}{w}^{-1}=
(g_1^{-1}g_2)t_2(g_1^{-1}g_2)^{-1}.
\end{equation*}
\vskip 2mm\noindent
Hence $g_1^{-1}g_2=\overset{.}{w}z$ for $z\in G_{t_2}$.
As $t_2\in T$ is strongly regular, this yields that
$z\in T$.
Therefore,\vskip 1mm
\begin{equation*}
(g_2T, t_2)=(g_1\overset{.}{w}T,
\overset{.}{w}^{-1}t_1\overset{.}{w})= w^{-1}\cdot(g_1T,
t_1).
\end{equation*}\vskip 2mm\noindent
Thus, $(g_1T, t_1)$ and $(g_2T, t_2)$ are in the same
$W$-orbit.  This completes the proof. \quad $\square$
\renewcommand{\qed}{}\end{proof}

Let $\pi^{}_2\colon G/T\times T_0\to T_0$ be the second
projection.
Clearly, $(T_0, \pi^{}_2)$ is the
geometric quotient for the action of $G$ on $G/T\times
T_0$. As $\psi$ is $G$-equivariant, this implies that
there is a morphism $\phi\colon T_0\to \GG$ such that
the following diagram is commutative:\vskip 1mm
\begin{equation}\label{cddd}
\begin{matrix}
\xymatrix{G/T\times T_0\ar[d]_{\pi^{}_2}\ar[r]^{\hskip 4mm\psi} & G_0\ar[d]^{\pG}\\
T_0\ar[r]^{\phi}&(\GG)_0 }
\end{matrix}\quad .
\end{equation}\vskip 2mm

\begin{lemma}\label{commutdia}
\
\begin{enumerate}
\item[\rm(i)] $\phi=\pG|_{T_{0}}$. \item[\rm(ii)]
For every point $t\in T_0$, the restriction of $\psi$ to
$\pi_2^{-1}(t)$ is a $G$-equivariant isomorphism
$\pi_2^{-1}(t)\to \pi_G^{-1}(\phi(t))$.
\end{enumerate}
\end{lemma}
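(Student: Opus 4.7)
For part (i), the natural move is to evaluate the commutative diagram~\eqref{cddd} at the point $(eT, t) \in G/T \times T_0$. Since $\psi(eT, t) = ete^{-1} = t$ and $\pi_2(eT, t) = t$, commutativity forces $\phi(t) = \pG(t)$ for every $t \in T_0$, which is precisely the claim.

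For part~(ii), fix $t \in T_0$ and let $\psi_t$ denote the restriction of $\psi$ to $\pi_2^{-1}(t)$, which I identify with $G/T$ via $gT \leftrightarrow (gT, t)$. My plan is to show that $\psi_t$ is a bijective \'etale morphism onto $\pG^{-1}(\phi(t))$ and then invoke the standard fact that such a map is an isomorphism. First, since every fiber of the restricted morphism~\eqref{pG0000} is a single closed regular $G$-orbit (by the definition of $G_0$), and since strong regularity of $t$ forces $G_t = T$, we have $\pG^{-1}(\phi(t)) = G(t)$ in the notation of~\eqref{cc}. By part~(i) and commutativity of~\eqref{cddd}, $\psi_t$ takes values in $\pG^{-1}(\phi(t)) = G(t)$; it is $G$-equivariant and satisfies $\psi_t(eT) = t$, hence surjects onto $G(t)$; and the implication $g_1 t g_1^{-1} = g_2 t g_2^{-1} \Rightarrow g_2^{-1} g_1 \in G_t = T \Rightarrow g_1 T = g_2 T$ gives injectivity.

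It remains to upgrade this bijection to an isomorphism, for which I plan to exploit \'etaleness of every relevant map. The morphism $\psi$ is \'etale by Lemma~\ref{dpsi}, the map $\phi = \pG|^{}_{T_0}$ is \'etale by Lemma~\ref{GG0}(ii), and the projection $\pi_2$ is smooth. At $z = (eT, t)$, commutativity of~\eqref{cddd} gives $d\pG \circ d\psi_z = d\phi \circ d(\pi_2)_z$, so $d\psi_z$ sends the subspace $\ker d(\pi_2)_z = T_{eT}(G/T)$ into $\ker d\pG = T_t G(t)$. Since $d\psi_z$ is itself an isomorphism and both $T_{eT}(G/T)$ and $T_t G(t)$ have dimension $\dim G/T$, the restriction $d(\psi_t)_{eT}$ must be an isomorphism onto $T_t G(t)$; $G$-equivariance then propagates this to every point of $\pi_2^{-1}(t)$, so $\psi_t \colon G/T \to G(t)$ is \'etale, and being bijective between smooth varieties over an algebraically closed field it is an isomorphism (bijectivity is radicial, radicial plus \'etale is an open immersion, and a surjective open immersion is an isomorphism). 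The only delicate point in the plan is this tangent-space comparison at $z$; everything else is either a direct chase of the commutative diagram~\eqref{cddd} or a standard property of \'etale maps.
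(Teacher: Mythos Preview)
Your proof is correct and follows essentially the same route as the paper's. For part~(ii) the paper is terser: it observes that both $\pi_2^{-1}(t)$ and $\pi_G^{-1}(\phi(t))$ are $G$-orbits with stabilizers conjugate to $T$ (hence the equivariant map is bijective), invokes Lemma~\ref{dpsi} directly for separability, and concludes via Zariski's Main Theorem using normality of the target---whereas you carry out a tangent-space comparison to get \'etaleness and then use ``bijective \'etale $=$ isomorphism''; your argument is a bit more explicit but not materially different.
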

\begin{proof} Take a point $t\in T_0$. Commutativity of diagram \eqref{cddd} and formula \eqref{psss} yield that
$\pG(t)=\pG(\psi(eT, t))=\phi(\pi^{}_2(eT, t))=
\phi(t)$. This proves (i).

Commutativity of diagram \eqref{cddd} implies that the
restriction of $\psi$ to $\pi_2^{-1}(t)$  is a
$G$-equivariant morphism $\pi_2^{-1}(t)\to
\pi_G^{-1}(\phi(t))$. As both
$\pi_2^{-1}(t)$ and  $\pi_G^{-1}(\phi(t))$ are
$G$-orbits
and the stabilizers of their points are conjugate to
$T$, this morphism is bijective. By Lemma \ref{dpsi} it
is separable. Then, as $\pi_G^{-1}(\phi(t))$ is normal,
it is an isomorphism. This proves (ii). \quad $\square$
\renewcommand{\qed}{}\end{proof}

\begin{proof}[Proof of Theorem {\rm \ref{rsrcs}}]
Assume that (i) holds. Let $\sigma\colon
\GG\dashrightarrow G$ be a rational section of $\pG$,
i.e., a section of $\pG$ over a dense open subset $U$ of
$(\GG)_0$. Let $S$ be the closure of $\sigma(U)$. Put
$\rho:=\pG|_S\colon S\to (\GG)_0$. Since
$\pG\circ\sigma={\rm id}$, shrinking $U$ if necessary,
we may
assume that, for every point $x\in U$, the following
properties hold:
\begin{enumerate}
\item[(a)] $S\cap \pi_G^{-1}(x)$ is a single point $s$;
\item[(b)] $d\rho_s$ is an isomorphism.
\end{enumerate}

Since $\psi$ is an isomorphism on the fibers of $\pi_2$,
property (a) implies that, for every point $t\in
\phi^{-1}(U)$, the $W$-stable closed set $\psi^{-1}(S)$
intersects $\pi_2^{-1}(t)$ at a single point. From this
we infer that $\psi^{-1}(S)$ has a unique irreducible
component $\widetilde S$ whose image under $\pi_2$ is
dense in $T_0$\,---\,the argument is the same as that in
the proof of Claim 2(i) in Section~\ref{crssect}. Due to
the uniqueness, this $\widetilde S$ is $W$-stable.

Let $V\subseteq \pi^{}_2(\widetilde S)\cap \phi^{-1}(U)$
be an open subset of $T_0$. Replacing it, if necessary,
by $\bigcap_{w\in W} w(V)$, we may assume that $V$ is
$W$-stable. Let ${\widetilde\rho}\colon
\pi_2^{-1}(V)\cap \widetilde S\to V$ be the restriction
of $\pi_2$ to $\pi_2^{-1}(V)\cap \widetilde S$. Then
$\widetilde\rho$ is a bijective $W$-equivariant
morphism. We claim that it is separable and hence, by
{\sc Zariski}'s Main Theorem, an isomorphism  (as $V$ is
normal). Indeed, take a point $\widetilde s\in
\pi_2^{-1}(V)\cap \widetilde S$ and put
$\pi^{}_2(\widetilde s)=t$. Then property (b), Lemma
\ref{dpsi}, and commutativity of diagram \eqref{cddd}
imply that $d{\widetilde\rho}_s\colon {\rm
T}_{\widetilde s, \widetilde S}\to {\rm T}_{t, V}$ is an
isomorphism; whence the claim by \cite[AG.17.3]{Bor}.

Thus, ${\widetilde\rho}^{-1}\colon
V\to  \pi_2^{-1}(V)\cap \widetilde S$ is a rational
$W$-equivariant section of $\pi_2$. Its composition with
the first projection $G/T\times T_0\to G/T$ is then a
$W$-equivariant rational map $T\dashrightarrow G/T$.
This proves (i)$\Rightarrow$(ii).

\vskip 1mm

Conversely, assume that (ii) holds. Let $\gamma\colon
T\dashrightarrow G/T$ be a $W$-equivariant rational map.
Then $\varsigma:=(\gamma, {\rm id})\colon
T_0\dashrightarrow G/T\times T_0$ is a $W$-equivariant
rational section of $\pi^{}_2$, i.e., a section of
$\pi^{}_2$ over a dense open subset $V$ of $T_0$. We may
assume that $\varsigma(V)$ and $S:=\psi(\varsigma(V))$
are open in their closures, $\varsigma\colon V\to
\varsigma(V)$ is an isomorphism, and the subsets
$\phi(V)$, $\pG(S)$ of $\GG$ are open and coincide.
As above, we may also assume that $V$ is $W$-stable.

Taking into account that $\varsigma$ is $W$-equivariant,
$\varsigma(V)\cap \pi_2^{-1}(t)$ is a single point for
every $t\in V$, and $\psi$ is an isomorphism on the
fibers of $\pi^{}_2$, we conclude that
property (a) holds for every $x\in \varsigma(V)$. Thus,
$\rho:=\pG|_S\colon S\to \phi(V)$ is a bijection.

We claim that $\rho$ is separable, hence an isomorphism
as $\phi(V)$ is normal by Lemma \ref{GG0}(i).
Indeed, $d\phi_t$ is an isomorphism by Lemma
\ref{commutdia}(i) and Lemma \ref{GG0}(ii).  Let
$s=\psi(\varsigma(t))\in S$. 
Since the restriction of $(d\pi^{}_2)_{\varsigma(t)}$ to
${\rm T}_{\varsigma(t), \varsigma(V)}$ is an isomorphism
with ${\rm T}_{t, V}$, commutativity of diagram
\eqref{cddd} and
Lemma \ref{dpsi} imply that
property (b) holds; whence the claim.

Thus, the composition of $\rho^{-1}\colon \phi(V)\to S$
and the identical embedding $S\hookrightarrow G$ is a
rational section of $\pG$. This proves
(ii)$\Rightarrow$(i)
 and completes the proof of the theorem.~\quad $\square$
\renewcommand{\qed}{}\end{proof}

\vskip -1mm

Recall some definitions from \cite[Sects.\;2.2, 2.3, and
3]{CTKPR}.

Let $P$ be a linear algebraic group acting on a variety
$X$ and let $Q$ be its closed subgroup. $X$ is called a
$(P, Q)$-{\it variety} if in $X$ there is a dense open
$P$-stable subset $U$, called {\it a friendly subset},
such that a geometric quotient $\pi^{}_U\colon U\to U/P$
exists and $\pi^{}_U$
becomes the second projection $P/Q\times \widehat{U/P}\to \widehat{U/P}$
after a surjective \'etale base change
$\widehat{U/P}\to U/P$. If there is a rational section
of $\pi^{}_U$, one says that $X$ {\it admits a rational
section}. $X$ is called a {\it versal $(P, Q)$-variety}
\;if $U/P$ is irreducible and, for every its dense open
subset $(U/P)_0$ and $(P, Q)$-variety $Y$, there is a
friendly subset $V$ of $Y$ such that $\pi^{}_V$
is induced from $\pi^{}_U$
by a base change $V\to (U/H)_0$.

\vskip 1.5mm

Now we shall give the characteristic free proofs of the
following two statements proved  in \cite{CTKPR} for
${\rm char}=0$.

\begin{lemma}\label{fingrou}
Let $X$ be an irreducible  variety endowed with a
faithful action of a finite algebraic group $H$. Then
\begin{enumerate}
\item[\rm(i)] $X$ is an $(H,\{e\})$-variety;
\item[\rm(ii)] $X$ is a versal $(H,\{e\})$-variety in
each of the following cases:
\begin{enumerate}
\item[\rm(a)] $X$ is a free $H$-module; \item[\rm(b)]
$X$ is a linear algebraic torus and $H$ acts by its
automorphisms.
\end{enumerate}
\end{enumerate}
\end{lemma}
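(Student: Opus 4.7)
I would take the friendly open $U\subseteq X$ to be the locus where the $H$-action is free, i.e.\ where all stabilizers are trivial. Since $H$ is finite and acts faithfully on the irreducible $X$, each non-identity $h\in H$ fixes only a proper closed subset, so $U$ is open and dense. A geometric quotient $\pi_U\colon U\to U/H$ exists (cover $U$ by $H$-stable affine opens and take invariants) and is a finite \'etale $H$-Galois cover because $H$ acts freely on $U$. As the \'etale base change I would take $\widehat{U/H}:=U$ via $\pi_U$ itself; the fibre product $U\times_{U/H}U$ is then $H\times U$ via $(h,u)\mapsto(hu,u)$, and the second projection matches $H\times U\to U$. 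This is the required trivialization with $P/Q=H/\{e\}=H$, verifying (i).

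\textbf{Part (ii): reduction and case (a).} Unpacking versality, for any $(H,\{e\})$-variety $Y$ with friendly open $V_Y$ and any dense open $(U/H)_0\subseteq U/H$, I must produce a friendly $V\subseteq V_Y$ and an $H$-equivariant morphism $\Phi\colon V\to U_0:=\pi_U^{-1}((U/H)_0)$ realizing $V\to V/H$ as the pullback of $U_0\to(U/H)_0$. Since $H$ acts freely on both sides, the Cartesianness of the square is automatic ($H$-orbits map bijectively to $H$-orbits), so the task reduces to producing an $H$-equivariant rational map $V_Y\dashrightarrow X$ whose generic image lies in $U_0$. In case (a), write $X\cong k[H]^{\oplus m}$ as $H$-module; then $H$-equivariant morphisms $V_Y\to X$ correspond, via ${\rm Hom}_H(k[H],k[V_Y])=k[V_Y]$, to unrestricted $m$-tuples of regular functions on $V_Y$. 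Choosing $m$ rational functions on $V_Y$ sufficiently generically produces a map that separates $H$-orbits on a dense open (placing the image in $U$) and meets $U_0$; both are nonempty open conditions on the tuple, achievable because $V_Y$ has ample transcendental freedom.

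\textbf{Part (ii)(b) and main obstacle.} For $X=T$ a torus on which $H$ acts by automorphisms, $H$-equivariant rational maps $V_Y\dashrightarrow T$ correspond to $H$-equivariant group homomorphisms ${\rm X}(T)\to k(V_Y)^\times$. I would write ${\rm X}(T)$ as a quotient of a free permutation $\mathbf{Z}[H]$-lattice, embed that lattice into $k(V_Y)^\times$ by selecting an $H$-orbit of rational functions for each generator, and impose the defining relations of ${\rm X}(T)$ by multiplicative manipulations in $k(V_Y)^\times$. The main obstacle lies precisely here: one must ensure that the resulting rational map lands in the prescribed $U_0\subseteq T$, not merely somewhere in $T$. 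This forces the rational functions to be chosen so that the composite map is dominant and avoids the preassigned complement; the richness of $k(V_Y)^\times$ as an $H$-module, coming from the positive transcendence degree of $V_Y$, makes this feasible, but this verification (and the associated care with $\mathbf{Z}[H]$-module presentations) is the technical heart of the argument.
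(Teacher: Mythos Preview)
Your treatment of part (i) is essentially the paper's: pass to the free locus, form the geometric quotient, and trivialize by base-changing along the quotient map itself so that $U\times_{U/H}U\cong H\times U$. The paper adds some preliminary reductions (smooth locus, $H$-stable affine open) and cites SGA1 for \'etaleness, but the idea is identical.

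For part (ii) the paper gives no self-contained argument: it simply invokes Lemma~3.3(b),(d) of \cite{CTKPR}, remarking that those proofs go through once their characteristic-zero input (Theorem~2.12 there) is replaced by part (i). Your sketch of (a) is along the expected lines and your reduction (``Cartesianness is automatic for free actions, so it suffices to build an $H$-equivariant rational map into $U_0$'') is correct.

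In (b), however, your setup creates an unnecessary difficulty and then does not resolve it. Presenting ${\rm X}(T)$ as a \emph{quotient} ${\bf Z}[H]^n\twoheadrightarrow {\rm X}(T)$ means that an $H$-equivariant homomorphism ${\rm X}(T)\to k(V_Y)^\times$ corresponds to an $n$-tuple in $(k(V_Y)^\times)^n$ satisfying all the defining relations of the kernel; you assert this can be done ``by multiplicative manipulations'' but supply no mechanism, and this---not ``landing in $U_0$''---is where the content lies. The route that works runs dually: embed ${\rm X}(T)$ into a free ${\bf Z}[H]$-lattice ${\bf Z}[H]^m$ (always possible, e.g.\ by dualizing a free presentation of the dual lattice). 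On tori this gives an $H$-equivariant surjection $T''=({\bf G}_m^{|H|})^m\twoheadrightarrow T$ with $H$ permuting coordinates in each block. Now $H$-equivariant rational maps $V_Y\dashrightarrow T''$ are parametrized \emph{without constraint} by $m$-tuples $(g_1,\dots,g_m)\in (k(V_Y)^\times)^m$ via $v\mapsto \big(g_i(h^{-1}v)\big)_{h,i}$, and their value at any point $y_0$ with trivial stabilizer can be prescribed arbitrarily. Choosing that value to lie over a point of $U_0$ and composing with $T''\to T$ gives the required map; the preimage of $U_0$ is then the friendly $V$. So your ``main obstacle'' is misdiagnosed: once one can build an equivariant map with a prescribed value at $y_0$, hitting $U_0$ is automatic.
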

\begin{proof}
 (i) Replacing $X$ by its smooth locus, we may assume that $X$ is smooth.

As $H$ is finite, for any nonempty open affine subset
$U$ of $X$, the set   $\bigcap_{h \in H}h(U)$ is
$H$-stable, affine, and open in $X$. So, replacing $X$
by it, we may assume that $X$ is affine. Then, as is
well known, for the action of $H$ on $X$ there is a
geometric quotient $\pi\colon X\to X/H$ (see,
e.g.,\;\cite[Prop.\;6.15]{Bor}).  As $X$ is normal,
$X/H$ is normal as well.

Since $H$ is finite and the action is faithful, the
points with trivial stabilizer form an open subset of
$X$. Replacing $X$ by it, we may also assume that the
action is set-theoretically free, i.e., the
$H$-stabilizer of every point of $X$ is trivial. As $X$
and $X/G$ are normal, by \cite[Exp.\;I, Th\'eor\`eme
9.5(ii)]{G3} and \cite[V.2.3, Cor.\,4]{Bour} this
property implies  that $\pi$ is \'etale and hence $X/H$
is smooth.

For every base change $\beta\colon Y\to X/H$ of $\pi$,
the group $H$ acts on $X\times_{X/H} Y$ via $X$. As the
action of $H$ on $X$ is set-theoretically free, taking
$Y=X$ and $\beta=\pi$, we obtain\vskip 1mm
\begin{equation*}\label{XXX}
X\times_{X/H}X=\bigsqcup_{h\in H}
h(D)\qquad\mbox{where}\quad D:=\{(x,x)\mid x\in X\}.
\end{equation*}
\vskip 2mm\noindent
From this we deduce that  in the commutative diagram

\begin{equation*}\label{trianggg}
\quad\begin{matrix}
\xymatrix@C=3mm@R=5mm{H\times X\ar[rr]^{\hskip
-3mm\alpha} \ar[dr]
&&X\times_{X/H}X\ar[dl]\\
&X
& }
\end{matrix}\quad,
\end{equation*}
where $\alpha(h,x):=(h(x), x)$
and two other maps are the second projections, $\alpha$
is an $H$-equivariant isomorphism. This proves (i).

The proofs of (ii)(a) and (ii)(b) are the same as that
of (b) and (d) in \cite[Lemma 3.3]{CTKPR} if one
replaces in them the references to \cite[Theorem
2.12]{CTKPR} (whose proof is based on the assumption
${\rm char}\,k=0$) by the references to statement (i) of
the present lemma. \quad $\square$
\renewcommand{\qed}{}\end{proof}
\begin{remark} {\rm The proof of (i) shows that, for finite group actions, set-theoretical freeness coincides with that in the sense of GIT,
\cite[Def.\;0.8]{MF}.}
\end{remark}

\begin{lemma}\label{G/Tversal}
$G$ is a versal $(G,T)$-variety.
\end{lemma}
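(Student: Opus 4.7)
The plan is in two parts: first I will verify that $G$ is a $(G,T)$-variety with friendly subset $G_0$, and then establish versality by reducing to the versality of $T$ as a $(W,\{e\})$-variety provided by Lemma~\ref{fingrou}.

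\textbf{Part 1 (The $(G,T)$-structure on $G$).} I take as candidate friendly open $U := G_0$. By Lemma~\ref{GG0}(iii), $\pG\colon G_0 \to (\GG)_0$ is a geometric quotient for the conjugating $G$-action, and by Lemma~\ref{GG0}(ii) the morphism $\pG|_{T_0}\colon T_0 \to (\GG)_0$ is surjective and \'etale. It remains to identify the base change of $\pG$ along $\pG|_{T_0}$ with the second projection $\pi_2\colon G/T\times T_0\to T_0$. Commutativity of diagram~\eqref{cddd} shows that $\psi$ from \eqref{psss} factors through the fiber product $G_0\times_{(\GG)_0}T_0$, and this factorization is a bijection on each fiber over $T_0$ by Lemma~\ref{commutdia}(ii); together with separability of $\psi$ (Corollary~\ref{psisep}) and normality of the fiber product, Zariski's Main Theorem upgrades it to an isomorphism. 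Under this identification $\pi_2$ becomes the base change of $\pG$, so $G_0$ is a friendly subset and $G$ is a $(G,T)$-variety.

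\textbf{Part 2 (Versality).} Let $Y$ be an arbitrary $(G,T)$-variety with friendly subset $W'$, quotient $\pi_{W'}\colon W'\to W'/G$, and surjective \'etale base change $\beta\colon\widehat{W'/G}\to W'/G$ over which $\pi_{W'}$ becomes the second projection $G/T\times\widehat{W'/G}\to\widehat{W'/G}$. The trivialization is carried by a compatible action of the Weyl group $W=N_G(T)/T$: through \eqref{G/T} on $G/T$ and by deck transformations on $\widehat{W'/G}$, with the diagonal action descending back to $W'$. Consequently $W$ acts faithfully and generically freely on $\widehat{W'/G}$, making it a $(W,\{e\})$-variety by Lemma~\ref{fingrou}(i). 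Since $T$ with its natural $W$-action is a versal $(W,\{e\})$-variety by Lemma~\ref{fingrou}(ii)(b), shrinking $\widehat{W'/G}$ if necessary produces a $W$-equivariant morphism $f\colon \widehat{W'/G}\to T$, and after a further shrinking I may assume $f$ lands in the $W$-stable open $T_0\subseteq T$. The composition
\[
\tilde\mu\colon G/T\times\widehat{W'/G}\xrightarrow{\mathrm{id}\times f}G/T\times T_0\xrightarrow{\psi}G_0
\]
is $G$-equivariant via the first factor and $W$-invariant: $\mathrm{id}\times f$ is diagonally $W$-equivariant, and $\psi$ is the $W$-quotient map by Corollary~\ref{vartheta}. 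Descending along the Galois $W$-action on $\widehat{W'/G}$, $\tilde\mu$ yields a $G$-equivariant morphism from a friendly open $V\subseteq Y$ to $G_0$, which exhibits $\pi_V$ as the pullback of $\pG\colon G_0\to(\GG)_0$ along the induced map $V/G\to(\GG)_0$. This is exactly versality.

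The principal technical point I expect to require care lies in Part~2: verifying that the Galois $W$-action on $\widehat{W'/G}$ is compatible with the right $W$-action on $G/T$ in the trivialization, so that the diagonal $W$-action on $G/T\times\widehat{W'/G}$ descends to $W'$ and matches the $W$-invariance of $\psi$ built into \eqref{psss}. Once this compatibility is pinned down, everything else is routine bookkeeping with \'etale base changes and successive restrictions to friendly opens.
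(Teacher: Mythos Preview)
Part~1 follows the paper's line and is in fact a little more direct: the paper passes to the smooth locus $F_{\rm sm}$ of $F := G_0 \times_{(\GG)_0} T_0$, but since $T_0 \to (\GG)_0$ is \'etale (Lemma~\ref{GG0}(ii)), the projection $F \to G_0$ is \'etale as well, so $F$ is already smooth and that restriction is vacuous. You do owe the reader the reason $F$ is normal (precisely this \'etaleness) and the reason the canonical map $\gamma\colon G/T\times T_0\to F$ is separable: compose $d\gamma$ with the isomorphism $d({\rm pr}_1)$ coming from \'etaleness of $F\to G_0$ to recover $d\psi$, which is an isomorphism at every point by Lemma~\ref{dpsi}.

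Part~2 contains a genuine gap. The paper does not argue this step in detail; it simply imports \cite[proof of Prop.~4.3(c)]{CTKPR}. Your attempt to unpack it founders on the assertion that the given \'etale cover $\beta\colon\widehat{W'/G}\to W'/G$ carries a $W$-action ``by deck transformations''. Nothing in the definition of a $(G,T)$-variety guarantees this: $\beta$ is merely \emph{some} surjective \'etale morphism over which the $G/T$-bundle trivializes, and it need not be Galois, let alone with group $W$. Without a $W$-action on $\widehat{W'/G}$ there is no $W$-equivariant $f\colon\widehat{W'/G}\to T$, and the subsequent descent to $Y$ collapses. The remedy is not a compatibility check but a missing construction: since ${\rm Aut}_G(G/T)=N_G(T)/T=W$, an \'etale-locally-trivial $G/T$-bundle over the base determines an associated principal $W$-bundle $P$ (one concrete realization is the $T$-fixed locus of the friendly open, using $(G/T)^T=N_G(T)/T$), and it is $P$, not the arbitrary $\widehat{W'/G}$, that is the $(W,\{e\})$-variety to which Lemma~\ref{fingrou}(ii)(b) applies. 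Your closing caveat is right that Part~2 is where the work lies, but the difficulty is a missing object rather than a missing verification. (Minor: writing $W'$ for the friendly open while $W$ denotes the Weyl group makes Part~2 needlessly hard to parse.)
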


\begin{proof} First, we shall give
a characteristic free proof of the fact that $G$ is a
$(G, T)$-variety
(the proof given in \cite{CTKPR} is based on the
assumption ${\rm char}\,k=0$).
By Lemma \ref{GG0}(iii) this is equivalent to proving
the existence of a dense open subset $U$ of $(\GG)_0$
such that after a surjective \'etale base change $U'\to
U$
morphism \eqref{pG0000}
becomes the second projection $G/T\times U'\to U'$.

Consider the base change of $\pG$ in \eqref{cddd} by
means of $\phi$. Lemma \ref{commutdia}(i) implies that\vskip 1mm
\begin{equation}
\label{FP} F:=G_0\times_{(\GG)_0} T_0=\{(g, t)\in
G_0\times T_0 \mid G(g)=G(t)\}
\end{equation}
\vskip 2mm\noindent
(see \eqref{cc}).
We have the canonical map corresponding to commutative
diagram \eqref{cddd}:\vskip 1mm
\begin{equation}\label{cmap}
\gamma:=\psi\times {\rm id}\colon G/T\times
T_0\longrightarrow F,\qquad (gT, t)\mapsto (gtg^{-1},
t).
\end{equation}\vskip 2mm\noindent
It follows from \eqref{FP} that $\gamma$ is surjective;
whence $F$ is irreducible. But if for $t\in T_0$ and
$g_1, g_2\in G$ we have $g_1tg_1^{-1}=g_2tg_2^{-1}$,
then $g_1T=g_2T$ since $G_t=T$. Therefore, $\gamma$ is
bijective. Lemma \ref{dpsi} and \eqref{cmap} show that
$d\gamma_x$ is injective for every $x\in G/T\times T_0$.
Hence if $\gamma(x)$ lies
in  the smooth locus $F_{\rm sm}$ of $F$, then
$d\gamma_x$ is the isomorphism. This implies that
$\gamma$ is separable and then, by {\sc Zariski}'s Main
Theorem, that the restriction of $\gamma$
to $\gamma^{-1}(F_{\rm sm})$
is an isomorphism $\gamma^{-1}(F_{\rm sm})\to F_{\rm
sm}$.

As $F_{\rm sm}$ is $G$-stable and $\gamma$ is
$G$-equivariant,  $\gamma^{-1}(F_{\rm sm})$ is a
$G$-stable open subset of $G/T\times T_0$. Hence
it is of the form $G/T\times U'$ for
an open subset $U'$
of $T_0$. But Lemmas \ref{G0T0}(ii) and
\ref{commutdia}(i) imply that $U:=\phi(U')$ is open in
$(\GG)_0$ and $\phi|_{U'}\colon U'\to U$ is \'etale.
This proves that after the \'etale base change
$\phi|_{U'}\colon U'\to U$ morphism \eqref{pG0000}
becomes the second projection $G/T\times U'\to U'$.
Hence $G$ is a $(G, T)$-variety.

By Lemma \ref{fingrou}(b), $T$ is a versal
$(W,\{e\})$-variety. The characteristic free arguments
from \cite[proof of Prop.\;4.3(c)]{CTKPR} then show that
this fact implies versality of the $(G, T)$-variety $G$.
This completes the proof of the lemma. \quad $\square$
\renewcommand{\qed}{}\end{proof}

\begin{proof}[Proof of
Theorem {\rm \ref{anychar}}]
Since the isogeny $\tau$ is central,
the natural morphism
$\tG/\tT\to G/T$ is an isomorphism by
\cite[Props.\;6.13, 22.5]{Bor}.

Using $\tau$, every action of $G$ naturally lifts
to an
action of $\tG$ on the same variety. In particular, $G$
is endowed with
an action of $\tG$.  But $G$ is a $(G, T)$-variety by
Lemma \ref{G/Tversal}(i).
As $\tG/\tT$ and $G/T$ are isomorphic, this means that
$G$ is a $(\tG,\tT)$-variety. But $\tG$ is a versal
$(\tG,\tT)$-variety (by Lemma \ref{G/Tversal}) that
admits a rational section (by Lemma \ref{GG0}(iii) and
\cite[Theorem 1.4]{St1}). Hence by \cite[Theorem 3.6(a)]{CTKPR} (the
proof of this result is characteristic free) every
$(\tG, \tT)$-variety admits a rational section. In
particular, this is so for $G$.  This proves (ii) and
completes the proof of the theorem. \quad $\square$
\renewcommand{\qed}{}\end{proof}

\begin{proof}[Proof of
Corollary {\rm \ref{rtcrsct}}] By Theorem \ref{anychar} there is a rational section $\sigma\colon \GG\dashrightarrow G$ of $\pG$. The closure of the image of $\sigma$ is then the desired cross-section $S$ (see Subsection\;\ref{re}.A).
\quad $\square$\renewcommand{\qed}{}\end{proof}

\section{Complements} \label{re}
\subsection*{6.A.\;Cross-sections versus sections} \label{re1}
If there is a section $\sigma\colon \GG\to G$ of $\pG$,
then $\sigma(\GG)$ is a cross-section in $G$. Indeed, as
${\rm id}_{k[\GG]}$ is the composition of the
homomorphisms\vskip 1mm
$$ k[\GG]\xrightarrow{\pi_G^*} k[G]\xrightarrow{\sigma^*} k[\GG],
$$\vskip 2mm\noindent
$\pi_G^*$ is surjective; by \cite[Cor.\;4.2.3]{G2} this
means that $\sigma$ is
a closed embedding.

The cross-section $\sigma(\GG)$ has the property that
the restriction of $\pG$
to $\sigma(\GG)$ is an isomorphism
$\sigma(\GG)\to \GG$.
Conversely, let $S$ be a cross-section in $G$. If
$\pG|_S\colon S\to \GG$
is separable, then, since
$\pG|_S$ is bijective and $\GG$ is normal, {\sc
Zariski}'s Main Theorem implies that $\pG|_S$ is an
isomorphism
(cf.\;\cite[AG\,18.2]{Bor}). So in this case the
composition of $(\pG|_S)^{-1}$ with the identity
embedding $S\hookrightarrow G$ is a section of $\pG$
whose image is $S$. In particular, if ${\rm char}\,k=0$,
then every cross-section in $G$ is the image of a
section of $\pG$. If ${\rm char}\,k>0$,
then in the general case this
is not true.

\begin{example}\label{exm}  Let $G={\bf SL}_3$ and ${\rm char}\,k=p>0$. Then for every integer $d>0$,
\begin{equation*}
S:=\{s(a_1, a_2)\mid a_1, a_2\in k\}, \qquad
\mbox{where}\quad
s(a_1, a_2):=
\begin{pmatrix}a_1 & a_2 & 1\\
1& a^{p^d}_1-a_1 &0\\
0& 1 & 0
\end{pmatrix},
\end{equation*}
\noindent is a cross-section in $G$ such that
$\pG|_S$
 is
not separable. Indeed, as ${\rm ch}_{\varpi_i}(g)$ is the
sum of principal $i$-minors of $g\in G$, we have (see
Lemma \ref{lin}(ii))\vskip 1mm
$$(\lambda\circ\rho)\big(s(a_1, a_2)\big)=\big(a^{p^d}_1, a_1\big(a_1^{p^d}-a_1\big)-a_2\big).\qquad \square$$
\end{example}
\vskip 2mm

Similarly, if $\sigma\colon \GG\dashrightarrow G$ is a
rational section of $\pG$ and $S$  is the closure of
its image, then $S$ is a rational cross-section in $G$
such that the restriction of $\pG$ to it is a
birational isomorphism with $\GG$.

\subsection*{6.B.\;Group action on the set of cross-sections}  Let ${\rm Mor}(\GG, G)$ be the group of
morphisms $\GG\to G$. If $S$ is a cross-section in $G$
and $\gamma\in {\rm Mor}(\GG, G)$, then\vskip 1mm
$$
\gamma(S):=\{\gamma(s)s\gamma(s)^{-1}\mid s\in S\}
$$
\vskip 2mm
\noindent is a cross-section in $G$. This defines an
action of ${\rm Mor}(\GG, G)$ on the set of
cross-sections in $G$.
If ${\rm char}\,k=0$, then by
\cite{FM} this action is transitive. If ${\rm
char}\,k>0$, then in the general case
this
is not
true: in Example \ref{exm}, {\sc Steinberg}'s section
and $S$ are not in the same ${\rm Mor}(\GG, G)$-orbit
since, for the former,
the restriction of $\pG$ is separable \cite[Theorem
1.5]{St1}, but, for the latter, it is not.

\subsection*{6.C.\;Lifting \boldmath$T$-action} \label{lift} By
Corollary \ref{toricstr} there is an
action of $T$ on $T/W$ determining a structure of a
toric variety. This action cannot be lifted to $T$
making $\pi^{}_{W, T}\colon T\to T/W$ equivariant. This
follows from the fact that the automorphism group of the
underlying variety of $\,T$ is ${\bf GL}_r({\bf
Z})\ltimes T$.

\subsection*{6.D.\;Image of a rational cross-section in \boldmath$G$ under $\pG$} Assume that $\tau$ is not bijective (for ${\rm char}\,k=0$, this means that $G$ is not simply connected). Let $S$ be a rational cross-section in $G$ such that $\varphi:=\pG|_S\colon S\to \GG$ is a birational isomorphism ($S$ exists by Corollary \ref{rtcrsct}). Let $D$ be the closure of the complement of $\,\pG(S)$ in $\GG$.

The following shows that $D$ cannot be
``too small''.
\begin{theorem} $\cod_{\GG}D=1$.
\end{theorem}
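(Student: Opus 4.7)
My plan is to argue by contradiction: assume $\mathrm{codim}_{\GG}D\geq 2$ and produce a global section of $\pG$, contradicting Theorem~\ref{main}(i) together with the standing hypothesis that $\tau$ is not bijective. Let $\sigma\colon\GG\dashrightarrow S\hookrightarrow G$ denote the rational inverse of the birational morphism $\varphi=\pG|_S$, viewed as a rational section of $\pG$. Since $\sigma$ takes values in $S$ and satisfies $\pG\circ\sigma=\mathrm{id}$ on its domain, the domain is contained in $\pG(S)$ and, being open, in fact lies in the interior $\GG\setminus D$ of $\pG(S)$.

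The key extension principle is the following: because $\GG$ is normal (by Theorem~\ref{toric} it is isomorphic to the affine toric variety $T/W$, hence normal) and $\pG$ is an affine morphism between affine varieties, any rational section of $\pG$ defined on an open $U\subseteq\GG$ whose complement has codimension at least $2$ in $\GG$ extends uniquely to a global section. Indeed, such a rational section corresponds to a $k[\GG]$-algebra homomorphism $k[G]\to k[U]$; classical Hartogs, applied to the normal variety $\GG$ under the codimension hypothesis, gives $k[U]=k[\GG]$, so the homomorphism automatically lands in $k[\GG]$ and defines a genuine section.

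It remains to show that, under $\mathrm{codim}_{\GG}D\geq 2$, the maximal domain $V_{\mathrm{max}}$ of $\sigma$ equals $\GG\setminus D$ (which then has codim-$\geq 2$ complement, so that the extension principle applies). Given $w\in\GG\setminus D$, the fiber $\pG^{-1}(w)\cap S$ is nonempty, and the rational-cross-section hypothesis on $S$ together with the birationality of $\varphi$ guarantee that $\pG^{-1}(v)\cap S$ is a reduced single point for $v$ generic in a neighborhood of $w$; Zariski's Main Theorem applied locally to the quasi-finite birational morphism $\varphi$ at a preimage $s\in S$ of $w$ then shows that $\varphi$ is an open immersion in a Zariski neighborhood of $s$, so that $\sigma=\varphi^{-1}$ extends regularly to $w$. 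Combining with the extension principle gives a global section of $\pG$, contradicting Theorem~\ref{main}(i) and finishing the proof. The main delicate point is precisely this last local argument, which relies essentially on the rational-cross-section structure of $S$; care is needed because $\varphi$ could a priori have ramified or positive-dimensional fibers over lower-dimensional strata, which must be ruled out in a neighborhood of $w\in\GG\setminus D$.
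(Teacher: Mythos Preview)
Your overall strategy matches the paper's, and your Hartogs extension principle in the second paragraph is correct. The gap is in your third paragraph: you apply Zariski's Main Theorem at a preimage $s$ of $w\in\GG\setminus D$, calling $\varphi$ ``quasi-finite'' there, but nothing rules out the fiber $\varphi^{-1}(w)$ being positive-dimensional---you flag this yourself as ``the main delicate point'' without resolving it. The paper sidesteps the issue by working with functions rather than points: for each $f\in k[S]$ there is a unique $h\in k(\GG)$ with $\varphi^*(h)=f$, and a Seshadri-type lemma \cite[Sect.~2, Lemma]{P1} (using only that $\GG$ is normal and $\varphi^*(h)$ is regular on $S$) shows $h$ is regular on the interior $\GG\setminus D$ of $\varphi(S)$, with no quasi-finiteness hypothesis needed. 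Hartogs then gives $h\in k[\GG]$, so $\varphi^*\colon k[\GG]\to k[S]$ is bijective, $\varphi$ is an isomorphism, and $S$ is a global cross-section---contradiction.

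If you prefer to repair the ZMT route, note that you only need $\cod_{\GG}(\GG\setminus V_{\mathrm{max}})\geq 2$, not the stronger $V_{\mathrm{max}}=\GG\setminus D$. Let $E\subseteq S$ be the closed locus where $\varphi$ is not quasi-finite; since generic fibers of $\varphi$ are points, $\dim E\leq r-1$, and since the fibers of $\varphi|_E$ have positive dimension, $\dim\overline{\varphi(E)}\leq r-2$. On $S\setminus E$ the morphism $\varphi$ is quasi-finite, birational, and $\GG$ is normal, so ZMT makes $\varphi|_{S\setminus E}$ an open immersion; hence $\sigma$ is regular on the open set $\varphi(S\setminus E)$, whose complement lies in $D\cup\overline{\varphi(E)}$ and therefore has codimension $\geq 2$. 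This closes the gap without appealing to \cite{P1}.
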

\begin{proof} Assume the contrary. Take a function $f\in k[S]$. Since $\varphi$ is a birational iso\-mor\-phism, $f=\varphi^*(h)$ for some function $h\in k(\GG)$. As $\GG$ is normal,
$h$ is regular at every point of $(\GG)\setminus D$, see \cite[Sect.\;2, Lemma]{P1}. Using again that $\GG$ is normal, we then deduce from $\cod_{\GG}D>1$ that $h\in k[\GG]$. As $G$ and $\GG$ are affine and $S$ is closed in $G$, this shows that $\varphi$ is an isomorphism.
 Hence $S$ is a (global) cross-section in $G$. As $\tau$ is not bijective, the latter contradicts Theorem \ref{main}(i).
 \quad $\square$
\renewcommand{\qed}{}\end{proof}

\subsection*{6.E.\;Questions} Given Theorem
\ref{ratcs}, it would be interesting to construct
explicitly an example of a $W$-equivariant rational map
$T\dashrightarrow G/T$.

---\;\;Is there such a map defined on $T_0$?

---\;\;Is there a rational section of $\pG$ defined on
$(\GG)_0$?

\end{document}